\documentclass[12pt,reqno]{amsart}
\usepackage{amssymb,latexsym,amsmath,epsfig,mathrsfs}
\usepackage{enumitem, amsmath}
\usepackage{thmtools}
\usepackage{rotating}
\usepackage{graphicx}
\usepackage{amssymb}
\usepackage{lineno}

\usepackage{cite}
\usepackage{multicol}
\usepackage[top=3cm, bottom=3cm, left=2.5cm, right=2.5cm]{geometry}
\usepackage[usenames]{color}
\usepackage[colorlinks=true,
linkcolor=blue,
filecolor=blue,
citecolor=blue]{hyperref}
\usepackage{thm-restate}

\numberwithin{equation}{section}

\newtheorem{prop}{Proposition}[section]

\newtheorem{example}{Example}[section]
\newtheorem{theorem}{Theorem}[section]

\newtheorem{lemma}[theorem]{Lemma}

\newtheorem{conjecture}[theorem]{Conjecture}

\begin{document}
\title[On small Doubling in right-ordered groups and  Baumslag-Solitar Groups-II]{On small Doubling in right-ordered groups and  Baumslag-Solitar Groups - II \\[2ex] \small \text{\textit{In the memory of G.A. Freiman(1926-2024)}}} 
	
	
	
	\author[Mohan]{Mohan}
		\address{Department of Applied Science and Humanities, B.K. Birla Institute of Engineering and Technology, Pilani, 333031, India}
		\email{mohan98math@gmail.com}
	\author[Neetu]{Neetu$^{\dagger}$}
	\address{Department of Mathematical and Computational Sciences, National Institute of Technology Surathkal, Karnataka, 575025, India}
	\email{chananianeetu@gmail.com}
    \thanks{$^{\dagger}$The corresponding author}
	
	\subjclass[2010]{11P70, 11B75, 11B13}
	
	
	
	\keywords{Ordered groups, right-ordered groups, Baumslag-Solitar Groups, product set, inverse problem, Freiman's 3k-4 conjecture}
	
	\begin{abstract}  
Recently, Mohan et al. [Results Math. \textbf{80} (2025), No. 4, 122] answered Freiman's $3k-4$ conjecture 
in right-ordered groups under certain restrictions. In this paper, we take a step further by investigating the structure of nonempty subsets $S$ of a right-ordered group satisfying the small doubling condition $|S^2| = 3|S|-3$. Moreover, we provide a 
complete characterization of all nonempty finite subsets $S$ of the Baumslag-Solitar group $\mathrm{BS}(1,q)$ (with $q \in \mathbb{Z}$ and $q \neq -1$) for which 
$|S^2| = 3|S|-3$ and the identity element is the minimum of $S$.

\end{abstract}
\maketitle

\section{Definitions and Notations}
 Let $\mathbb{N}$ be the set of all natural numbers and $\mathbb{Z}$ be the set of all integers. For integers $a$ and $b$, we define $[\![ a,b ]\!] = \{x\in \mathbb{Z}: a\leq x \leq b\}$.   The cardinality of the set $S$ is denoted by $|S|$. Throughout the article, $e$ stands for the identity element of the group $G$. Let $S$ be a nonempty subset of a group $G$.  Then the centralizer of $S$, denoted by $C_{G}(S)$, is defined as follows:
    $$C_{G}(S) := \{x\in G: xs=sx ~\text{for all}~ s\in S\}.$$
The subgroup generated by the set $S$ is denoted by $\langle S \rangle$. In particular, the subgroup generated by $x_{1}, x_{2}, \ldots, x_{n}$ is denoted by $\langle x_{1}, x_{2}, \ldots, x_{n} \rangle$.  
 A set $S$ is said to be abelian if $\langle S \rangle$ is an abelian subgroup of $G$. Define  $[x,y] := x^{-1}y^{-1}xy$ and $x^{y} := y^{-1}xy$, where $x,y \in G$.
\par
A group $G$ is said to be a \textit{right-ordered group} if it is a totally ordered set under a relation $\leq$, which satisfies the following condition:
	$$xz \leq yz \ \text{whenever} \ x \leq y \ \text{for all} \ x, y, z \in G.$$
	\noindent  Similarly, we define \textit{left-ordered group}. A group $G$ is said to be a left-ordered group if it is a totally ordered set under a relation $\leq$, which satisfies the following condition:
	$$zx \leq zy \ \text{whenever} \ x \leq y \ \text{for all} \ x, y, z \in G.$$
	A group $G$ is said to be an \textit{ordered group} if it is both left and right-ordered group under a common relation $\leq$. A group $G$ is said to be orderable if it admits an ordering $\leq$ such that $wxz \leq wyz$ whenever $x \leq y$ for all $w,x,y,z \in G$. For example, Baumslag-Solitar group $\mathrm{BS}(1,q) :=\langle a,b\mid ab=b^qa\rangle$, where $q\in \mathbb{Z}$, is an ordered group when $q\geq 0$, but a right-ordered group (not orderable) when $q<0$ (see \cite{Neetu_Mohan_Shankar}). If $G$ is an abelian right-ordered group, then $G$ is an ordered group. For more details on right-ordered groups and ordered groups, one may see \cite{Conrad_1959},  \cite{1999_POS}, and \cite{Kopytov_1994}.  If $S$ is a nonempty finite subset of a right-ordered group $G$ with a relation $\leq$, then the maximum and minimum elements of the set $S$, denoted by $\max(S)$ and $\min(S)$, are defined as follows:
	$$\max(S) = x, ~\text{if}~ x\in S~\text{and}~x\geq s~\text{for all}~s\in S, $$ and $$\min(S) = y,~\text{if}~y\in S~\text{and}~y\leq s~\text{for all}~s\in S. $$
	An element $x$ of a right-ordered group $G$ is said to be positive (negative) if $x > e$ $(x < e)$.
\section{Introduction}

Let $\alpha$ and $\beta$ be real numbers with $\alpha \geq 1$. A finite subset $S$ of a group $G$ is said to satisfy the small doubling property if $$|S^{2}| \leq \alpha |S| + \beta,$$ where $S^{2}=\{s_{1}s_{2}\colon s_{1},s_{2}\in S\}$. In the abelian case, where the group operation is written additively, the corresponding sumset is denoted by  $2S=\{s_{1} + s_{2} \colon s_{1}, s_{2}\in S \}$. The inverse problem of small doubling is to describe the structure of such sets $S$, which is one of the central questions in additive combinatorics. The classical Freiman inverse theorems provide a complete description of a finite subset $S$ of an abelian group under specific small doubling constraints (see \cite{Freiman_1959, Freiman_1960, Freiman_1973, Freiman_1999, Nathanson_1996}). The first major result is Freiman's $3k-4$ theorem as follows.
\begin{theorem}\textup{\cite{Freiman_1973}}
Let $S$ be a finite set of integers with at least three elements. If $|2S| \leq 3|S| - 4$, then $S$ is contained in an arithmetic progression of size $|2S| - |S| + 1 \leq 2|S| - 3$.\end{theorem}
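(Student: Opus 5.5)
We prove Freiman's $3k-4$ theorem by the classical normalization-and-induction argument (Lev--Smeliansky style), using the basic inequality $|A+B|\ge |A|+|B|-1$ for finite sets of integers.

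\emph{Normalization.} Write $k=|S|\ge 3$. Sumset sizes are invariant under translation $S\mapsto S-\min S$ and under dilation by a common factor. So we may assume $\min S=0$ and $\gcd(S)=1$. Put $\ell=\max S$. The claim then becomes
$$\ell\le |2S|-k\le 2k-4,$$
since the progression $\{0,1,\dots,\ell\}$ has $\ell+1\le |2S|-k+1$ terms. In the unnormalized setting the progression has common difference $d=\gcd(S-\min S)$.

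\emph{Main inequality.} We show that if $\ell\le 2k-3$ then $|2S|\ge k+\ell$, and if $\ell\ge 2k-2$ then $|2S|\ge 3k-3$. The second alternative contradicts the hypothesis $|2S|\le 3k-4$. So the hypothesis forces $\ell\le 2k-3$, and the first alternative gives $\ell\le |2S|-k$, as required.

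\emph{Reduction modulo $\ell$.} Let $\bar S$ be the image of $S$ in $\mathbb{Z}/\ell\mathbb{Z}$; it has $k-1$ elements, because $0$ and $\ell$ collide. For each residue class $r$ met by $2S$, the elements of $2S$ in class $r$ cover at least one integer. Classes of the form $r=\bar s$ with $s\in S\setminus\{\ell\}$ contain both $s$ and $s+\ell$, so they contribute at least two elements. This gives
$$|2S|\ \ge\ |2\bar S| + |\bar S| = |2\bar S| + (k-1).$$
Now apply Kneser's theorem (or Cauchy--Davenport when $\ell$ is prime) in $\mathbb{Z}/\ell\mathbb{Z}$. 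If $|2\bar S|\ge \min(\ell,\,2(k-1)-1)$, then either $|2S|\ge \ell+k-1$ or $|2S|\ge 3k-4$. Both bounds are off by one from what we need, and in the second case that slack matters.

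\emph{Main obstacle: the lost unit.} To recover the missing $1$, we sharpen the count. The element $2\ell$ lies in class $0$ together with $0$ and $\ell$. Also, the class $\bar s$ of a nonzero $s$ contains both $s$ and $s+\ell$. Counting carefully, $|2S|\ge |2\bar S|+|\bar S|$ already counts class $0$ with the three elements $0,\ell,2\ell$, which yields the extra unit.

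The remaining obstruction is that $2\bar S$ may have a nontrivial period $H$ in Kneser's theorem. Here $\gcd(S)=1$ is used: $\bar S$ generates $\mathbb{Z}/\ell\mathbb{Z}$. If $|2\bar S|<2|\bar S|-1$, Kneser gives $\bar S+H$ a union of $H$-cosets with $|2\bar S|\ge 2|\bar S+H|-|H|$. We then either conclude directly, or pass to the quotient and run an induction on $k$ applied to the set of cosets.

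If this bookkeeping becomes messy, we would fall back on Lev--Smeliansky's direct induction: remove $\max S$ and compare $|2S|$ with $|2(S\setminus\{\ell\})|$, using the new sums $\ell+s$ that exceed $2\max(S\setminus\{\ell\})$. We expect this step, making the Kneser case airtight with exactly the $-4$ constant, to be the hardest part.
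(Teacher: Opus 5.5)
The paper does not prove this statement; it is quoted from \cite{Freiman_1973} as background. Your proposal therefore has to stand on its own.

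Its setup is sound. This covers the normalization, the reduction of the theorem to the dichotomy ($\ell\le 2k-3\Rightarrow|2S|\ge k+\ell$; $\ell\ge 2k-2\Rightarrow|2S|\ge 3k-3$), and the residue count. The correct form of that count is $|2S|\ge|2\bar S|+|\bar S|+1=|2\bar S|+k$: class $\bar 0$ contains $0,\ell,2\ell$, and the class of each $s\in S$ with $0<s<\ell$ contains $s$ and $s+\ell$.

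With this count the first alternative needs no Kneser at all. If $\ell\le 2k-3$ then $|\bar S|=k-1>\ell/2$, so $\bar S\cap(x-\bar S)\neq\emptyset$ for every $x$. Hence $2\bar S=\mathbb{Z}/\ell\mathbb{Z}$ and $|2S|\ge\ell+k$. In the second alternative you are likewise done when $2\bar S=\mathbb{Z}/\ell\mathbb{Z}$ or when $2\bar S$ is aperiodic, in particular whenever $\ell$ is prime.

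The gap is exactly the case you defer: $\ell\ge 2k-2$ with $2\bar S$ having a nontrivial proper period $H$. This is where the real difficulty of the theorem lies. There Kneser's bound $|2\bar S|\ge 2|\bar S+H|-|H|$ can fall below $2k-3$, so your count is short by more than the unit you recovered.

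For example, take $S=\{0,1,4,5,8,9,12\}$. Then $k=7$, $\ell=12=2k-2$ and $\gcd(S)=1$. Also $\bar S=H\cup(H+1)$ with $H=\{0,4,8\}$, and $2\bar S=H\cup(H+1)\cup(H+2)$. Kneser is tight here ($9=2\cdot 6-3$), and your argument gives only $|2S|\ge 9+7=16$. The required (and actual) value is $3k-3=18$. The two missing elements lie in classes of $2\bar S\setminus\bar S$ that contain two elements of $2S$, namely $2,14$ and $6,18$. Nothing in your argument detects such classes.

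Neither suggested repair is yet an argument.
\begin{itemize}
\item The image of $\bar S$ in $(\mathbb{Z}/\ell\mathbb{Z})/H\cong\mathbb{Z}/d\mathbb{Z}$ is a set of residues, not of integers. So ``induction on $k$ applied to the set of cosets'' has no instance of the theorem to apply to.
\item In the ``remove $\max S$'' induction, the only sums guaranteed to exceed $2\max(S\setminus\{\ell\})$ are $\ell+\max(S\setminus\{\ell\})$ and $2\ell$. So when induction gives only $|2(S\setminus\{\ell\})|\ge 3(k-1)-3$, you reach $3k-4$, one short of $3k-3$.
\end{itemize}
To finish, you need a genuine additional idea that produces extra elements of $2S$ in the periodic case. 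One option is a class-by-class analysis of $S$ modulo $d$, where $H=d\mathbb{Z}/\ell\mathbb{Z}$. Another is a fully worked-out induction on $k$ that locates the missing third new sum.
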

\noindent Further, the following Freiman's  $3k-3$ theorem describes the extremal case.
\begin{theorem}\textup{\cite{Freiman_1973}}
Let $S$ be a finite set of integers with at least three elements. If $|2S| = 3|S| - 3$, then one of the following holds:
\begin{itemize}
    \item[\textup{(i)}] $S$ is a subset of an arithmetic progression of size at most $2|S| - 1$,
   \item[\textup{(ii)}] $S$ is the union of two arithmetic progressions with the same difference,
    \item[\textup{(iii)}] $|S| = 6$ and $S$ is Freiman-isomorphic to the set
    $$
    K_6 = \{(0,0), (1,0), (2,0), (0,1), (0,2), (1,1)\}.
    $$
\end{itemize}
\end{theorem}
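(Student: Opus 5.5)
We prove Freiman's $3k-3$ theorem by normalization, reduction modulo the largest element, and induction on $k=|S|$. Translate and dilate $S$, which preserves $|2S|$ and the structure in (i)–(iii), so that $\min S=0$ and $\gcd(S)=1$. Write $S=\{0=a_0<a_1<\dots<a_{k-1}=\ell\}$. We split according to whether $\ell\le 2k-2$ or $\ell\ge 2k-1$.

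\emph{Case $\ell\le 2k-2$.} Then $S\subseteq[\![0,\ell]\!]$, an arithmetic progression of size $\ell+1\le 2k-1$. This gives (i) directly.

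\emph{Case $\ell\ge 2k-1$.} Let $\bar S$ be the image of $S$ in $\mathbb{Z}/\ell\mathbb{Z}$, so $|\bar S|=k-1$. Freiman's lemma gives
$$|2S|\ge |2\bar S|+k,$$
because the elements of $S+\ell$ that exceed $\ell$ are not hit by the lift of $2\bar S$ inside $[\![0,\ell-1]\!]$. Hence $|2\bar S|\le 2k-3=2|\bar S|-1$. Since $\gcd(S)=1$, the set $\bar S$ generates $\mathbb{Z}/\ell\mathbb{Z}$. Kneser's theorem then yields a proper subgroup $H=d\mathbb{Z}/\ell\mathbb{Z}$ with
$$|2\bar S+H|\ge 2|\bar S+H|-|H|.$$
Suppose $H$ is trivial. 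Then $|2\bar S|\ge 2|\bar S|-1$, so equality holds throughout and the inequality $|2S|\ge|2\bar S|+k$ is tight. A Vosper/Kemperman-type analysis of critical pairs in the cyclic group should show that $\bar S$ is an arithmetic progression. Its difference must be coprime to $\ell$ because $\bar S$ generates. Lifting back, $S$ is either a union of two arithmetic progressions with a common difference, giving (ii), or contained in a short progression.

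\emph{The main obstacle} is the nontrivial-$H$ case. Here $S$ is contained in few cosets of $d\mathbb{Z}$, and we would count sums coset by coset. With $S=\bigcup_i S_i$ and $S_i\subseteq i+d\mathbb{Z}$, the number of residue classes $m$ satisfies $|2\bar S_{\mathbb{Z}/d}|\ge 2m-1$. Each sum class contributes at least $|S_i|+|S_j|-1$ elements. Balancing these counts against the budget $3k-3$ forces $m\le 3$, and in the two-dimensional configuration the only tight case is $m=3$ with every fibre of size at most $3$. This should reduce the problem to a set Freiman-isomorphic to a subset of the triangle $\{(x,y):x,y\ge0,\ x+y\le 2\}$. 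A direct finite check on such sets gives $K_6$, which is (iii). The remaining tight configurations, with $m=2$ and both fibres being progressions of difference $d$, give (ii).

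I expect the hardest part to be keeping track of equality in every inequality of this fibre count. I would first try to handle it with the $2$-dimensional Freiman bound $|2S|\ge 3k-3$ for sets not on a line, which is due to Freiman. Its equality cases are then exactly the two-line configurations and $K_6$.
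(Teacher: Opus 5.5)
The paper does not prove this statement. It is quoted from Freiman's 1973 monograph as background, so your proposal has to stand on its own, and as written it has a genuine gap.

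\textbf{What works.} The normalization is fine, and the case $\ell\le 2k-2$ gives (i). The inequality $|2S|\ge|2\bar S|+k$ is correct: each class $\bar a\neq 0$ with $a\in S$ contains $a$ and $a+\ell$, and the class of $0$ contains $0,\ell,2\ell$. Hence $|2\bar S|\le 2|\bar S|-1$.

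\textbf{The trivial-stabilizer branch is wrong.} Vosper's theorem needs $\ell$ prime. In a composite cyclic group, Kemperman's theorem allows quasi-periodic critical sets, and they occur here. In fact $K_6$ itself lies in this branch. Take $S=\{0,1,2,N,N+1,2N\}$ with $N\ge 6$, which is Freiman-isomorphic to $K_6$:
\begin{itemize}
\item $|2S|=15=3k-3$ and $\ell=2N\ge 2k-1$;
\item in $\mathbb{Z}/2N\mathbb{Z}$ we get $\bar S=\{0,1,2,N,N+1\}$ and $2\bar S=\{0,1,2,3,4,N,N+1,N+2,N+3\}$, of size $9=2|\bar S|-1$ with trivial stabilizer;
\item $|2S|=|2\bar S|+k$ is tight.
\end{itemize}
Yet $\bar S=(\{0,1\}+\{0,N\})\cup\{2\}$ is not an arithmetic progression. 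The type (ii) sets $S=\{0,N,2N,\dots,mN\}\cup\{1\}$ with $m\ge 2$, $N\ge 4$ behave the same way, giving $\bar S=(N\mathbb{Z}/mN\mathbb{Z})\cup\{1\}$. So the claim that this branch forces $\bar S$ to be a progression is false. Your plan to recover $K_6$ in the nontrivial-$H$ branch therefore misplaces it: the exceptional configuration comes exactly from the quasi-periodic critical sets your argument rules out.

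Even when $\bar S$ is a progression, the lifting step is unargued. The lift to $[\![0,\ell-1]\!]$ of a progression of difference $d$ in $\mathbb{Z}/\ell\mathbb{Z}$ can wrap around many times. You would need the class-by-class tightness to show it wraps at most once.

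\textbf{The nontrivial-$H$ branch is a heuristic, not a proof.} Three claims are asserted without derivation: that the fibre count forces $m\le 3$, that the tight case has fibres of size at most $3$, and that one lands in a subset of the triangle $\{(x,y):x,y\ge 0,\ x+y\le 2\}$. Falling back on the planar bound $|2A|\ge 3|A|-3$ and its equality cases does not close the gap. You have not shown that $S$ is Freiman-isomorphic to a non-collinear planar set. Moreover, the planar equality characterization (two parallel progressions with a common difference, or $K_6$) carries essentially the full content of the theorem you are proving.

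\textbf{What is needed.} To make your route work, you need Kemperman's structure theorem (or a direct substitute) for critical sets with $|2\bar S|=2|\bar S|-1$ in $\mathbb{Z}/\ell\mathbb{Z}$ with $\ell$ composite. You then need an analysis of which quasi-periodic and progression-type $\bar S$ lift to sets with $|2S|=3k-3$. That analysis is where both (ii) and (iii) actually come from.
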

\noindent In \cite{Freiman_1973}, Freiman also investigated subsets $S \subseteq \mathbb{Z}$ with $|2S| = 3|S| - 2$, providing a deeper understanding of small doubling in torsion-free abelian groups. Freiman et al.\cite{Freiman_et_al_2014, Freiman_2012, Freiman_Herzog_2014, Freiman_et_al_2015, Freiman_et_al_2016, Freiman_et_al_2017, Freiman_et_al_2018} initiated the systematic study of small doubling in ordered groups.
By extending Freiman's $3k-4$ theorem to the ordered group setting, they established the following results.
\begin{theorem}\textup{\cite[Theorem 1.3]{Freiman_et_al_2014}}\label{Freiman's result for ordered group-1}
    Let $G$ be an orderable group and $S$ be a finite subset of $G$ of size at least $3$. If $|S^{2}| \leq 3|S| - 3$, then $\langle S \rangle$ is an abelian subgroup of $G$.\end{theorem}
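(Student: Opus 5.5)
We argue by induction on $k=|S|$, using only the order and the small doubling hypothesis. Write $S=\{x_1<x_2<\cdots<x_k\}$. In an orderable group multiplication is strictly monotone on both sides, so any chain $a<b<c$ in $S$ produces strict chains in $S^2$. The basic tool is the standard counting argument. Let $T=S\setminus\{x_k\}$. Then $S^2$ contains $T^2$ together with the elements $x_{k}x_{k-1}$, $x_{k-1}x_k$ and $x_k^2$, all of which exceed $\max(T^2)=x_{k-1}^2$. Hence $|S^2|\ge |T^2|+3$ if $x_{k-1}x_k\neq x_kx_{k-1}$, and $|S^2|\ge|T^2|+2$ in every case. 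Removing the minimum instead gives the symmetric statement: the elements $x_1^2, x_1x_2, x_2x_1$ all lie below $\min\big((S\setminus\{x_1\})^2\big)=x_2^2$.

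First, the base case $k=3$. Suppose $|S^2|\le 6$ and $\langle S\rangle$ is non-abelian. Counting the nine products and using strict monotonicity, we list the forced strict inequalities: $x_1^2<x_1x_2,x_2x_1<x_2^2<\dots<x_3^2$, and so on. We then show that at most three coincidences among the nine products are possible, and that each coincidence such as $x_1x_3=x_2^2$ or $x_1x_3=x_3x_1$ pins down relations. Checking the few configurations yields that all pairs commute. For example, if $x_1x_2\ne x_2x_1$, then the elements $x_1^2,x_1x_2,x_2x_1,x_2^2$ and the three distinct products involving $x_3$ on top already give at least $7$ elements.

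Second, the induction step. Assume $|S^2|\le 3k-3$ with $k\ge4$. The set $S'=S\setminus\{x_k\}$ satisfies $|S'^2|\le|S^2|-2\le 3(k-1)-2$. This is one more than the inductive bound allows, so we split into cases. If $x_{k-1}$ and $x_k$ do not commute, the top gives $3$ new elements, so $|S'^2|\le 3(k-1)-3$. Then $S'$ is abelian by induction, and symmetrically $S\setminus\{x_1\}$ is abelian (or we handle the commuting case at the bottom). If both $S\setminus\{x_k\}$ and $S\setminus\{x_1\}$ are abelian and $k\ge4$, they share at least two elements. The only pair left to check is $x_1,x_k$. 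We show $x_1x_k=x_kx_1$ by counting again: the elements of $S^2$ in the top part $\{x_{k-1}x_k,\dots\}$ and those of the form $x_1x_k,x_kx_1$ cannot all be absorbed.

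In the remaining case, both extreme pairs commute and both deletions give only $|S'^2|\le 3(k-1)-2$. This is the main obstacle, since the induction hypothesis does not apply directly. There I would refine the count by tracking the products $x_{k-2}x_k$, $x_kx_{k-2}$ as well: at least one of these new elements lies strictly above $x_{k-1}^2$ unless they equal $x_{k-1}^2$. Equality forces a tight structure akin to an arithmetic progression, $x_{k-2}x_k=x_{k-1}^2=x_kx_{k-2}$. This would let us show inductively that a non-commuting pair yields an extra product element somewhere in the chain. Alternatively, I would pass to the known abelian-case theorems (Freiman's $3k-4$/$3k-3$) once a large commuting subset is established, and use the resulting progression structure to force commutation with the remaining elements.
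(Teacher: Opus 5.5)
The paper gives no proof of this statement; it is quoted from Freiman et al.\ (2014). So your argument has to stand on its own, and as written the induction does not close.

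\textbf{What is correct.} Your basic count is right. With $T=S\setminus\{x_k\}$ one has $|S^2|\ge |T^2|+2$ always, and $|S^2|\ge |T^2|+3$ when $x_{k-1}x_k\ne x_kx_{k-1}$. The case $k=3$ is also true, but your sample count overstates it. If $x_2x_3=x_3x_2$ there are only two new products on top, and the seventh element must come from $x_1x_3$ or $x_3x_1$.

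\textbf{The first branch is mis-organised.} Once $S\setminus\{x_k\}$ is abelian, $x_1x_2=x_2x_1$, so there is no ``symmetric'' gain of three at the bottom. Moreover $S\setminus\{x_1\}$ contains the non-commuting pair $x_{k-1},x_k$, so the subcase ``both deletions abelian'' never occurs in this branch. What actually remains is: $A=S\setminus\{x_k\}$ abelian and $x_k\notin C_G(A)$. You never treat this, though it is easy in a bi-ordered group:
\begin{itemize}
\item $x_kA\cup Ax_k$ is disjoint from $A^2$, since otherwise $x_k\in\langle A\rangle$.
\item It has at least $|A|+1$ elements. Both $Ax_k$ and $x_kA$ are increasing chains, so $x_kA=Ax_k$ would force $x_ka=ax_k$ for all $a\in A$.
\item $x_k^2>\max(A^2)$.
\end{itemize}
Hence $|S^2|\ge (2k-3)+k+1=3k-2$. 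The mirror count handles the case where $S\setminus\{x_1\}$ is abelian, including the situation where the only non-commuting pair is $x_1,x_k$.

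\textbf{The genuine gap} is the case you flag yourself: $T=S\setminus\{x_k\}$ and $S\setminus\{x_1\}$ are both nonabelian, while $x_{k-1}x_k=x_kx_{k-1}$ and $x_1x_2=x_2x_1$. Here induction gives only $|S^2|\ge (3k-5)+2=3k-3$, exactly one short. Under $|S^2|\le 3k-3$ this forces $S^2=T^2\cup\{x_{k-1}x_k,x_k^2\}$, so every product $x_ix_k$, $x_kx_i$ with $i\le k-2$ lies in $T^2$. Ruling that out is where the substance of the theorem lies, and you give no argument for it.

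Your proposed refinement rests on a false claim. The ordering does not force one of $x_{k-2}x_k$, $x_kx_{k-2}$ to lie above $x_{k-1}^2$ unless it equals it. Already in $\mathbb{Z}$, $S=\{0,1,10,11\}$ gives $1+11<2\cdot 10$, so these products can perfectly well be absorbed in $T^2$.

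The alternative of passing to the abelian $3k-4$/$3k-3$ theorems presupposes a large commuting subset that you never construct. Even with one, those theorems say nothing about how the remaining elements interact with it.

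What is missing is an actual mechanism, exploiting the two-sided invariance of the order, that produces a third new product of $x_k$ outside $T^2$ in this configuration, or some other source of one extra element. Without it the inductive step is not proved.
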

   
\begin{theorem}\textup{\cite[Corollary 1.4]{Freiman_et_al_2014}}\label{Freiman GP}
Let $G$ be an orderable group and $S$ be a finite subset of $G$ with at least three elements. If $|S^2| \leq 3|S| - 4$, then there exist $x, g \in G$, with $gx = xg$, such that
$$ S \subseteq \{x, xg, xg^2, \dots, xg^t\}, \quad \text{where } t = |S^2|-|S|.$$
\end{theorem}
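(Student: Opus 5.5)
The plan is to deduce Theorem~\ref{Freiman GP} from Theorem~\ref{Freiman's result for ordered group-1} together with Freiman's $3k-4$ theorem for integers, by passing to a torsion-free abelian group and then to $\mathbb{Z}$ via a Freiman isomorphism.

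\emph{Step 1: reduce to the abelian case.} Since $|S^2|\le 3|S|-4<3|S|-3$, Theorem~\ref{Freiman's result for ordered group-1} shows that $H=\langle S\rangle$ is abelian. As a subgroup of an orderable group, $H$ is torsion-free. Finitely generated torsion-free abelian groups are free, so $H\cong\mathbb{Z}^d$ for some $d$, and we may write the operation additively.

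\emph{Step 2: Freiman-isomorphic image in $\mathbb{Z}$.} We embed $S$ into $\mathbb{Z}$ by a linear map $\varphi:\mathbb{Z}^d\to\mathbb{Z}$. Choose $\varphi(v)=\sum_i N^{i}v_i$ with $N$ large compared with the coordinates of $S-S$. Then $\varphi$ is injective on $S+S$ and on $S$, so $|2\varphi(S)|=|2S|$. We then apply Freiman's $3k-4$ theorem to $\varphi(S)$. It gives an arithmetic progression $\{a,a+d',\dots,a+td'\}\supseteq\varphi(S)$ with $t=|2S|-|S|$.

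\emph{Step 3: pull the progression back to $H$.} This is the main obstacle. Knowing only that $\varphi(S)$ lies in a progression does not directly produce $g$ in $H$. The idea is to use the structural content of the $3k-4$ theorem, and the key claim is that $d=1$. Normalize so that $\min\varphi(S)=0$ and $\gcd(\varphi(S))=1$; then Freiman's theorem gives $\varphi(S)\subseteq[\![0,t]\!]$ with $t\le 2|S|-3$.

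If $d\ge 2$, the lattice structure is used as follows. Let $L_c$ denote the lines parallel to a coordinate direction. Write $S$ as the union of its intersections $S_c=S\cap L_c$ with $m\ge2$ such lines. The standard bound for sumsets of sets spread over $m$ parallel lines gives $|2S|\ge 3|S|-3$. This contradicts $|2S|\le 3|S|-4$. Hence $S$ lies on a single line, that is, in a coset $x\langle g\rangle$ of a cyclic subgroup.

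Identifying $x\langle g\rangle$ with $\mathbb{Z}$ via $xg^k\mapsto k$, the integer $3k-4$ theorem applies directly. After normalizing $x=\min S$ and taking the progression difference as the generator, it gives $S\subseteq\{x,xg,\dots,xg^t\}$ with $t=|S^2|-|S|$, and $gx=xg$ because $H$ is abelian.

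Thus the delicate point is the lower bound $|2S|\ge 3|S|-3$ for non-collinear $S$ in $\mathbb{Z}^d$. I would prove it by induction on the number of lines. Order the lines by the value of a linear functional. Then compare the sums of the extreme fibres with the sumset of the remaining fibres, using $|A+B|\ge|A|+|B|-1$ in each fibre.
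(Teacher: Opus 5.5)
The paper gives no proof of this statement; it is quoted from \cite{Freiman_et_al_2014}. Your architecture is the natural derivation: abelianness from Theorem~\ref{Freiman's result for ordered group-1}, then torsion-freeness, then reduction to a coset $x\langle g\rangle$ of a cyclic subgroup, then the integer $3k-4$ theorem. Your final step is correct, and Step~2 is never used and can be dropped. The gap is in the step you yourself flag as delicate.

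First, what must be shown is that the \emph{affine} dimension of $S$ is $1$, i.e.\ $\langle s_0^{-1}S\rangle$ is cyclic for $s_0\in S$. It is not that $d=\mathrm{rank}\,\langle S\rangle=1$. Additively, $S=\{(1,0),(1,1),(1,2)\}\subseteq\mathbb{Z}^2$ has $|2S|=5=3|S|-4$ but $\langle S\rangle=\mathbb{Z}^2$. The same set meets three lines parallel to the coordinate direction $(1,0)$. So being spread over $m\ge 2$ parallel coordinate lines does not force $|2S|\ge 3|S|-3$. The bound needs non-collinearity, and your argument never uses it.

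The fibre method cannot detect non-collinearity. The non-collinear $S=\{(0,0),(1,1),(2,0)\}$ and the collinear $T=\{(0,0),(1,1),(2,2)\}$ have the same (singleton) fibres on the vertical lines $x=0,1,2$, and $|2T|=5$. Hence any estimate built only from fibre sizes, the order of the lines and $|A+B|\ge|A|+|B|-1$ gives at most $5<6=3|S|-3$. The missing sixth sum is $(2,0)\in S_1+S_3$, which differs from $(2,2)\in S_2+S_2$ precisely because $S$ is not collinear.

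Your induction also breaks concretely. For fibres on consecutive lines, deleting the top fibre $S_m$ guarantees only the new sums in $S_{m-1}+S_m$ and $S_m+S_m$. That is at least $3|S_m|+|S_{m-1}|-2$ elements, one short of the needed $3|S_m|$ when $|S_{m-1}|=1$. Moreover, $S\setminus S_m$ may be collinear (as in the example), so the inductive hypothesis need not apply.

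The lemma itself is true and classical. It is the case $r\ge 2$ of Freiman's lemma $|2S|\ge (r+1)|S|-r(r+1)/2$ for $S$ of affine dimension $r$, so you may simply cite it. To prove it directly:
\begin{itemize}
\item Project generically to $\mathbb{Z}^2$, injectively on $S+S$ and keeping $S$ non-collinear.
\item Induct by deleting a vertex $v$ of $\mathrm{conv}(S)$, and put $S'=S\setminus\{v\}$.
\item If $S'$ lies on a line $\ell$, then $2S'$, $v+S'$ and $2v$ lie on three distinct lines parallel to $\ell$, so $|2S|\ge (2|S'|-1)+|S'|+1=3|S|-3$.
\item Otherwise $|2S'|\ge 3|S'|-3$ by induction, and $2v$, $v+w_1$, $v+w_2$ lie outside $2S'$. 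Here $w_i$ is the point of $\mathrm{conv}(S')$ nearest to $v$ on the $i$-th tangent line from $v$. It is a vertex, hence in $S'$, and $\tfrac12(v+w_i)\notin\mathrm{conv}(S')$.
\end{itemize}
With this lemma in place, the rest of your argument goes through.
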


\noindent A more detailed structural result for such sets is given in \cite{Freiman_et_al_2017}.
\begin{theorem}\textup{\cite[Theorem 4]{Freiman_et_al_2017}}\label{ordered 3k-3}
   Let $G$ be an ordered group and $S$ be a 
   finite subset of $G$. Then the
following statements hold.
\begin{itemize}
    \item[\upshape(i)] $|S^{2}|\geq 2|S|-1$.
    \item[\upshape(ii)] If $2|S|-1\leq |S^{2}|\leq 3|S|-4$, then $\langle S\rangle$ is abelian and at most $2$-generated.
     \item[\upshape(iii)] If $ |S^{2}|=3|S|-3$, then $\langle S\rangle$ is abelian and at most $3$-generated.
      \item[\upshape(iv)] Let $ |S^{2}|=3|S|-3+b$ for some integer $b\geq 1$. Then either $|S|=4$, $b=1$,  $\langle S\rangle$ is abelian and at most $(b+3)$-generated or $\langle S\rangle$ is at most $(b+2)$-generated .
\end{itemize}
\end{theorem}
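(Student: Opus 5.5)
Parts (i) and (ii) are quick, so I will dispose of them first. For (i), write $S=\{s_1<s_2<\dots<s_k\}$. Since $G$ is ordered, multiplication is monotone on both sides. So the chain
$$s_1s_1<s_1s_2<\dots<s_1s_k<s_2s_k<\dots<s_ks_k$$
consists of $2k-1$ distinct elements of $S^2$, which gives $|S^2|\ge 2|S|-1$. For (ii), an ordered group is in particular orderable, so Theorem~\ref{Freiman GP} applies when $|S|\ge 3$. It gives $S\subseteq\{x,xg,\dots,xg^t\}$ with $gx=xg$, hence $\langle S\rangle\le\langle x,g\rangle$. This subgroup is abelian, and being a subgroup of a $2$-generated abelian group it is at most $2$-generated. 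The cases $|S|\le 2$ are trivial, since $|S^2|\le 3|S|-4$ forces $|S|\ge 2$, and when $|S|=2$ the set itself has two generators and $|S^2|=3$ forces $s_1s_2=s_2s_1$.

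For (iii), Theorem~\ref{Freiman's result for ordered group-1} gives that $\langle S\rangle$ is abelian, and the case $|S|\le 2$ is trivial. An ordered abelian group is torsion-free, so the finitely generated group $\langle S\rangle$ is isomorphic to $\mathbb{Z}^r$. Write $H=\langle s^{-1}s_1: s\in S\rangle$ and let $d$ be its rank. Then $\langle S\rangle=\langle s_1\rangle H$ is generated by at most $d+1$ elements. Identifying $\langle S\rangle$ with $\mathbb{Z}^r$ additively, $S$ has affine dimension $d$. Freiman's lemma then gives
$$|2S|\ge (d+1)|S|-\frac{d(d+1)}{2}.$$
If $d\ge 3$, then $3k-3\ge 4k-6$, so $k\le 3$. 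For $k\le 3$ the set $S$ has at most $3$ elements and so trivially generates with at most $3$ generators. Otherwise $d\le 2$, which again gives at most $3$ generators.

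Part (iv) is the main obstacle, because $\langle S\rangle$ need not be abelian. I would split into two cases.
\begin{itemize}
\item \emph{Abelian case.} The same Freiman-lemma computation gives
$$(d-2)k\le \frac{d(d+1)}{2}-3+b,$$
together with $k\ge d+1$. This yields $d\le b+1$, and hence at most $b+2$ generators, except for small exceptional configurations. Checking these by hand should produce exactly $|S|=4$, $b=1$, for which one can have $d=3$ and four generators, i.e.\ $b+3$.
\item \emph{Nonabelian case.} I would induct on $|S|$ by removing the maximum $s_k$ and setting $T=S\setminus\{s_k\}$. By two-sided monotonicity, the elements $s_{k-1}s_k$, $s_ks_{k-1}$ and $s_k^2$ all exceed $\max(T^2)=s_{k-1}^2$. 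So $|S^2|\ge|T^2|+2$, and $|S^2|\ge|T^2|+3$ whenever $s_ks_{k-1}\ne s_{k-1}s_k$.
\end{itemize}
In the nonabelian case I would track the excess $b(S)=|S^2|-3|S|+3$ and show that each time $s_k\notin\langle T\rangle$ the bound on generators grows by at most the growth of $b$. When $s_k$ fails to commute with $s_{k-1}$ we already gain $3$, matching the step of $3$ in $3k$. To get an extra unit of excess when $s_k$ is a genuinely new generator, I would also count products $s_is_k$ with $i<k-1$ that lie above $s_{k-2}s_{k-1}$. The delicate step is exactly this accounting. I expect to handle it by choosing the removal order among the top elements so that some new product falls outside $T^2$, falling back on (iii) once the remaining set becomes abelian.
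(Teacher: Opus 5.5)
The paper gives no proof of this statement; it quotes it from Freiman et al., so your proposal has to stand on its own.

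\textbf{What works.} Parts (i)--(iii) are fine.
The chain $s_1^2<s_1s_2<\dots<s_1s_k<s_2s_k<\dots<s_k^2$ gives (i). Theorem~\ref{Freiman GP} gives (ii); your $|S|=2$ remark is moot, since the hypothesis would require $3\le |S^2|\le 2$. Theorem~\ref{Freiman's result for ordered group-1} together with Freiman's lemma gives (iii). For $d\ge 3$ you need $k\ge d+1$ to bound $(d+1)k-\frac{d(d+1)}{2}$ below by $4k-6$; then $k\le 3$ contradicts $k\ge 4$, so that case never occurs.
The abelian half of (iv) also works. If $d\ge b+2\ge 3$, then $(d-2)(d+1)\le (d-2)k\le \frac{d(d+1)}{2}-3+b\le \frac{(d-2)(d+5)}{2}$, which forces $d=3$, $b=1$, $k=4$.

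\textbf{The gap: the nonabelian half of (iv).} This is the substance of the statement, and your proposed induction does not close there. Write $b(S)=|S^2|-3|S|+3$ and $T=S\setminus\{s_k\}$, so that $b(S)-b(T)=|S^2\setminus T^2|-3$.

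The step you single out as delicate is actually easy.
\begin{itemize}
\item If $s_k\notin\langle T\rangle$, then no $s_kt$ or $ts_k$ with $t\in T$ lies in $T^2$, since that would put $s_k$ in $\langle T\rangle$. Hence $|S^2\setminus T^2|\ge |s_kT|+1=k$, and $b$ rises by at least $k-3\ge 1$ for $k\ge 4$, which pays for the new generator.
\item If $T$ is abelian and $S$ is not, then $s_kT\neq Ts_k$, because in a bi-ordered group equality would force $s_kt=ts_k$ termwise. This gives $|S^2\setminus T^2|\ge k+1$.
\end{itemize}

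The case your argument does not handle is the opposite one: $T$ nonabelian, $s_k\in\langle T\rangle$, $s_ks_{k-1}=s_{k-1}s_k$, and every $s_is_k$, $s_ks_i$ with $i\le k-2$ already lies in $T^2$. Then only $s_{k-1}s_k$ and $s_k^2$ are new. So $b(T)=b(S)+1$ while $\langle S\rangle=\langle T\rangle$, and the inductive hypothesis yields only $b(S)+3$ generators, one too many.

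To finish, you would need one of the following:
\begin{itemize}
\item a lemma showing that this configuration cannot occur when $T$ is nonabelian (equivalently, that some product $s_is_k$ or $s_ks_i$ with $i\le k-2$ always escapes $T^2$);
\item a proof that some other choice of deleted element always costs at least three new products;
\item a global count that compensates for the lost unit.
\end{itemize}
Your phrase ``choosing the removal order so that some new product falls outside $T^2$'' names this requirement but does not establish it. As written, (iv) remains unproved in the nonabelian case.
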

 \noindent A complete structural classification of sets $S$ with $|S^2| = 3|S| - 2$ in ordered groups, including the nonabelian case, was subsequently obtained in \cite{Freiman_et_al_2016, Freiman_et_al_2017}. These works reveal that such sets generate either abelian subgroups of bounded rank or specific nonabelian groups such as nilpotent groups of class $2$ or the Baumslag-Solitar group $\mathrm{BS}(1,2)$.

  More recently, analogous structural results have been established for various groups, including nilpotent groups of class 2, ordered semigroups, groups with bounded torsion, groups of prime order, cyclic groups, $m$-Engel groups, and Baumslag–Solitar groups  $\mathrm{BS}(1,q)  =\langle a,b: ab = b^qa \rangle$, $q\geq 0$ (see, for instance, \cite{Abdollahi_Jafari_2020, Chahal_Kaur_2025, Freiman_et_al_2018, Lev_2022, Hui_2022, Bol_2023, Co_2023, Lev_2023, Lev_2023.1, Lev_2020, Longobardi_2020, Pandey_2017, Tringali_2015, Sanders_2013}). 
  Despite this substantial progress, comparatively little is known in the setting of right-ordered groups, especially the Baumslag-Solitar groups $\mathrm{BS}(1,q)  =\langle a,b: ab = b^qa\rangle$, where $q<0$ is an integer. To the best of our knowledge, the first systematic study in this direction was undertaken by the authors together with Shankar \cite{Neetu_Mohan_Shankar}, where several foundational results on small doubling in right-ordered groups and the Baumslag-Solitar groups $\mathrm{BS}(1,q)$ were established.
 
    \begin{theorem}\textup{{\cite[Theorem 3.1]{Neetu_Mohan_Shankar}}}
		 Let $A$ be a nonempty finite abelian subset of a right-ordered group $G$ with $|A|\geq3$. Let $y\in G\setminus A$ and $S=A\cup\{y\}$. If 
		\begin{equation}\label{Sec-3-Eq-1}
			|S^{2}|\leq 3|S|-4,
		\end{equation}
		then $\langle S\rangle$ is an abelian subgroup of $G$. The bound in \eqref{Sec-3-Eq-1} is the best possible. 
	\end{theorem}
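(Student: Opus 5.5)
Write $k=|A|\ge 3$, so $|S|=k+1$ and the hypothesis reads $|S^2|\le 3k-1$. Since $\langle A\rangle$ is abelian and $G$ is right-ordered, the restricted order on $\langle A\rangle$ is bi-invariant, so $\langle A\rangle$ is an ordered abelian group and Freiman-type bounds apply inside it. The plan is to show $y$ commutes with every element of $A$ by counting elements of $S^2$ not in $A^2$.

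\textbf{Step 1.} I would write $S^2=A^2\cup yA\cup Ay\cup\{y^2\}$. By Theorem~\ref{ordered 3k-3}(i), applied in the ordered group $\langle A\rangle$, we have $|A^2|\ge 2k-1$. If $y$ fails to commute with some element of $A$, the goal is to exhibit at least $k+1$ elements of $yA\cup Ay\cup\{y^2\}$ outside $A^2$. That would give $|S^2|\ge 3k$, a contradiction.

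\textbf{Step 2.} Order $A=\{a_1<\dots<a_k\}$. Right-invariance gives $a_1y<a_2y<\dots<a_ky$, so $|Ay|=k$. The set $yA$ also has $k$ elements, although its order is not controlled. I would then split by the position of $y$ relative to $A$. Note that $A^2\cap Ay$ is the set of $a_ia_j$ equal to some $a_ly$, i.e.\ $y\in a_l^{-1}A^2$. Similarly $A^2\cap yA$ corresponds to $y\in A^2a_l^{-1}$. Because $A$ is abelian these sets lie in $\langle A\rangle$, so if $y\notin\langle A\rangle$ then $yA$, $Ay$ and $A^2$ are pairwise disjoint apart from the overlap $yA\cap Ay$. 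If in addition $yA\cap Ay$ has at most $k-1$ elements, we get at least $k+1$ new elements, and we are done.

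\textbf{Step 3.} This is the main obstacle. Suppose $|yA\cup Ay|\le k$, so $yA=Ay$ as sets. Then conjugation by $y$ permutes $A$: $y^{-1}Ay=A$. I would show this permutation preserves the order, using $a_i<a_j\Rightarrow ya_iy^{-1}$ compared with $ya_jy^{-1}$. This comparison cannot be done by invariance alone, so instead I would count: $Ay$ is increasing, hence $ya_{\sigma(i)}=a_iy$. From this I would then try to argue that $\sigma$ has a fixed point, or that $\sigma=\mathrm{id}$, by comparing minima. Since $a_1y=\min(Ay)=\min(yA)$, if $ya_1$ is not the minimum of $yA$ we would use $y^2$ and additional elements as the extra new element. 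If $\sigma\neq\mathrm{id}$, I would look at $y^2$ and the products $a_iya_j$; these do not lie in $S^2$, so instead I would use that $\sigma$ induces an automorphism of the ordered group $\langle A\rangle$ preserving $A$. Such an order automorphism of a finite subset arising from an automorphism of an ordered abelian group must be the identity when $|A|\ge 3$, by an Archimedean/progression argument. If this fails I would bring in $y^2\notin A^2$ as the $(k+1)$-st element.

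\textbf{Step 4.} Treat the case $y\in\langle A\rangle$, which gives commutativity immediately. Finally, for sharpness, I would give an example in $\mathrm{BS}(1,q)$: take $A=\{e,b,\dots,b^{k-1}\}$ and $y=a$, and verify that $|S^2|=3|S|-3$ while $\langle S\rangle$ is nonabelian.
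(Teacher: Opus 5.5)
Your Steps 1--2 set up the right count: for $y\notin C_G(A)$ the sets $A^2$, $yA\cup Ay$ and $\{y^2\}\setminus A^2$ are pairwise disjoint, with $|A^2|\ge 2k-1$ and $|yA\cup Ay|\ge k$. So everything hinges on the case $yA=Ay$. But Step 3 cannot be completed as you framed it, because the target you fix in Step 1 is false in general: you ask for at least $k+1$ elements of $yA\cup Ay\cup\{y^2\}$ outside $A^2$ while using only $|A^2|\ge 2k-1$.

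Here is a counterexample. In the right-ordered group $\mathrm{BS}(1,-1)=\langle a,b\mid ab=b^{-1}a\rangle$ the element $a^2$ is central. Take $A=\{b,b^{-1},a^2b,a^2b^{-1}\}$ and $y=a$. Then $A$ is abelian, $aA=Aa$, $a\notin C_G(A)$, and $y^2=b\cdot a^2b^{-1}\in A^2$, so only $k$ new elements appear. The theorem survives here only because $|A^2|=9>2k-1$.

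The same group also refutes your lemma that $\sigma$ must be the identity. Conjugation by $y$ is an automorphism of $\langle A\rangle$, but it need not respect the order, since only right multiplication is monotone in $G$. For $A=\{b^{-1},e,b\}$ and $y=a$, it reverses $A$. (Were it order-preserving, $\sigma=\mathrm{id}$ would be immediate, with no Archimedean argument needed.) So your fallback ``use $y^2\notin A^2$'' is exactly the point that needs proof, and it needs extra structure.

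The missing idea is to spend the slack in $|A^2|$. If $|A^2|\ge 2k$, then $|S^2|\ge 2k+k=3|S|-3$ and you are done. Otherwise $|A^2|=2k-1$, and Proposition~\ref{Prop-3.1} gives $A=\{x,xg,\dots,xg^{k-1}\}$ with $xg=gx$. What remains is Proposition~\ref{prop-1}: $yA=Ay$ and $y\notin C_G(A)$ imply $y^2\notin A^2$.

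For $y<\min(A)$ or $y>\max(A)$, this is Proposition~\ref{Coro-1}. For $\min(A)<y<\max(A)$, it is the progression structure that controls $yA$. Right-multiplying $yxg^i<yxg^{i+1}$ by $g$ shows that $i\mapsto yxg^i$ is monotone. Hence $yA=Ay$ forces either $y\in C_G(A)$, or $xg^iy=yxg^{k-1-i}$ together with $gy=yg^{-1}$. In the latter case, $y^2=x^2g^t$ forces $t=k-1$. Taking $g>e$, this gives $y^2=x^2g^{k-1}<yxg^{k-1}=xy<y^2$, a contradiction.

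Finally, your sharpness example fails. For $A=\{e,b,\dots,b^{k-1}\}$ and $y=a$ in $\mathrm{BS}(1,q)$, one gets $Aa\cup aA=\{b^ja: j\in\{0,\dots,k-1\}\cup\{0,q,\dots,(k-1)q\}\}$. This has more than $k$ elements whenever $q\notin\{0,1\}$, so $|S^2|>3|S|-3$. A correct example needs $y$ to reverse a symmetric progression, e.g.\ $A=\{b^{-m},\dots,b^m\}$ and $y=a$ in $\mathrm{BS}(1,-1)$. There $|S^2|=(2|A|-1)+|A|+1=3|S|-3$.
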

    \begin{restatable}{theorem}{MainTheorem}\textup{{\cite[Theorem 3.3]{Neetu_Mohan_Shankar}}}\label{NMS-Thm-3.3}
 Let $A$ and $B$ be two nonempty abelian subsets of a right-ordered group $G$  such that  $\max(A) < \min(B)$ and $\max(A^{2})\leq \min(B^{2})$. If 
    \begin{equation*}
        \left|(A\cup B)^{2}\right| \leq 3|A\cup B|-4,
    \end{equation*}
    then  $\langle A\cup B\rangle$ is an abelian subgroup of $G$.
\end{restatable}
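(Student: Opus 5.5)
Write $A=\{a_1<\dots<a_m\}$ and $B=\{b_1<\dots<b_n\}$, and put $S=A\cup B$, so that $|S|=m+n$. The plan is to bound $|S^2|$ from below by counting inside three pieces separately: $A^2$, $B^2$, and the cross products $AB\cup BA$. If $\langle S\rangle$ is non-abelian, I want to show these pieces force $|S^2|\ge 3|S|-3$, which contradicts the hypothesis.

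First, since $A$ is abelian, $\langle A\rangle$ is an abelian right-ordered group and hence an ordered group. The same holds for $B$. By Theorem~\ref{ordered 3k-3}(i) we therefore have $|A^2|\ge 2m-1$ and $|B^2|\ge 2n-1$. The hypothesis $\max(A^2)\le\min(B^2)$ means $A^2$ and $B^2$ overlap in at most one element, namely $a_m^2=b_1^2$. So $|A^2\cup B^2|\ge 2m+2n-3$, with strict inequality $2m+2n-2$ whenever $a_m^2<b_1^2$.

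Next I must find enough elements of $AB\cup BA$ lying outside $A^2\cup B^2$. Right-invariance gives $a_m b_1 > a_m a_m$ and $a_m b_1 < b_1 b_1$ (multiply $a_m<b_1$ on the right by $b_1$). Hence in the case $a_m^2<b_1^2$, the element $a_mb_1$ lies strictly between $\max(A^2)$ and $\min(B^2)$, so it is new. Similarly $a_ib_1<b_1^2$ and $a_ib_j$ lies below $b_1b_j$.

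The main obstacle is getting two further new elements when $\langle S\rangle$ is non-abelian, for example when some $a_i$ and $b_j$ do not commute. I would first try to compare $b_1a_m$ with $a_mb_1$: if they are distinct, both should be located in the interval $(a_m^2,b_1^2)$ or just outside it. Right multiplication controls comparisons of the form $xz$ versus $yz$, so $b_1a_m$ versus $a_m a_m$ is determined, since it equals $b_1$ versus $a_m$. For $b_1a_m$ versus $b_1b_1$, one cannot left-cancel. Instead I would use the left-order given by $x\le' y \iff y^{-1}\le x^{-1}$, or a careful case analysis.

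Given that the interval $(\max A^2,\min B^2)$ is narrow, I expect to show that it contains at least two new products $a_mb_1$ and $b_1a_m$ when they differ. A third new element should come from $a_{m-1}b_1$, $a_mb_2$, or an analogous pair, again when they are not already in $A^2\cup B^2$. If all candidate cross products coincide with existing elements, I would derive commutation relations $a_ib_j=b_ja_i$ step by step, propagating from the extremes, much as in the proof of \cite[Theorem 3.1]{Neetu_Mohan_Shankar}. Since $A$ and $B$ are each abelian, this gives abelianness of $\langle A\cup B\rangle$.

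Finally, the counting yields $|S^2|\ge 2m+2n-3+3=3|S|$ (or a weaker form $\ge 3|S|-3$) in the non-commuting case, which contradicts $|S^2|\le 3|S|-4$. Small cases with $m=1$ or $n=1$ would be handled by invoking \cite[Theorem 3.1]{Neetu_Mohan_Shankar} directly where applicable.
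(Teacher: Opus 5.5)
Your split into $A^2$, $B^2$ and cross products is right, and so is the bound $|A^2\cup B^2|\ge 2|S|-3$ (which improves to $2|S|-2$ when $\max(A^2)<\min(B^2)$). The final count, however, is wrong: $2m+2n-3+3=2|S|$, not $3|S|$. To contradict $|S^2|\le 3|S|-4$ you need $m+n$ elements of $S^2$ outside $A^2\cup B^2$ in the equality case, and $m+n-1$ in the strict case. That is linearly many in $|S|$. Your plan only looks for two or three individual products ($a_mb_1$, $b_1a_m$, $a_{m-1}b_1,\dots$), found by their position relative to the gap between $\max(A^2)$ and $\min(B^2)$. No refinement of that local search can close the argument once $|S|\ge 4$.

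The positional heuristic is also misleading. Most of the required new products lie interleaved with $A^2$ or $B^2$, so their novelty must come from an algebraic argument, not from the order. Separately, the appeal to the earlier paper's Theorem 3.1 only covers the case where one side is a singleton and the other has at least three elements.

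The missing idea is to count whole translates. This is exactly the counting that opens Propositions~\ref{section-4-prop-4.1} and~\ref{section-4-prop-4.2} here.

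\textbf{Reduction.} First reduce to $b_1\notin C_G(A)$. If $b_1$ centralizes $A$, move it into $A$. The new $A$ is still abelian and the hypotheses persist, now with $\max(A^2)=b_1^2<b_2^2=\min(B^2)$. If $B$ is exhausted, then $\langle S\rangle$ is abelian.

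\textbf{The translate $Ab_1$.} It has $m$ elements. It misses $A^2$, because $ab_1=a'a''$ would give $b_1\in\langle A\rangle$. It misses $B^2$, because $ab_1\le a_mb_1<b_1^2=\min(B^2)$.

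\textbf{Case $a_m^2=b_1^2$.} Here $a_m$ and $b_1$ cannot commute, since otherwise $a_m^2<b_1a_m=a_mb_1<b_1^2$. So $Ba_m$ has $n$ elements, and:
\begin{itemize}
\item it misses $B^2$, since otherwise $a_m\in\langle B\rangle$;
\item it misses $A^2$, since $ba_m\ge b_1a_m>a_m^2$;
\item it misses $Ab_1$, since $a_mb_1<b_1^2=a_m^2<b_1a_m$.
\end{itemize}
Hence $|S^2|\ge(2|S|-3)+|S|$.

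\textbf{Case $a_m^2<b_1^2$.} Let $a_j$ be the largest element of $A$ not commuting with $b_1$. Then $a_jB$:
\begin{itemize}
\item misses $B^2$, since otherwise $a_j\in\langle B\rangle\le C_G(b_1)$;
\item misses $A^2$, by a short case analysis using that every element of $A$ above $a_j$ centralizes $b_1$, together with $\max(A)<\min(B)$;
\item meets $Ab_1$ only in $a_jb_1$.
\end{itemize}
So $|S^2|\ge(2|S|-2)+(|S|-1)$.

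In both cases $|S^2|\ge 3|S|-3$, and no separate treatment of small $m,n$ is needed.
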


    \begin{theorem}\textup{\cite[Theorem 4.6]{Neetu_Mohan_Shankar}}\label{Theorem for BS(1,q)}
		Let $q$ be an integer with $q \neq -1$.  Let $S$ be a nonempty finite subset of $\mathrm{BS}(1,q)$ with $\left|S\right| \geq 3$ and $e =\min(S)$. If 
		\begin{equation*}
			\left|S^{2}\right| \leq 3|S|-4,
		\end{equation*}
		then $\langle S\rangle$ is abelian. \end{theorem}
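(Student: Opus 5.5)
We have to prove the statement immediately above, Theorem \ref{Theorem for BS(1,q)}: if $S\subseteq \mathrm{BS}(1,q)$, $q\neq -1$, $|S|\ge 3$, $e=\min(S)$ and $|S^2|\le 3|S|-4$, then $\langle S\rangle$ is abelian. We work in the standard model. For $q\neq 0$, every element of $\mathrm{BS}(1,q)$ can be written as $b^{r}a^{n}$ with $n\in\mathbb{Z}$ and $r\in\mathbb{Z}[1/q]$. Equivalently it is the affine map $x\mapsto q^{n}x+r$, with the right order inherited from the order on pairs $(n,r)$ described in \cite{Neetu_Mohan_Shankar}. The case $q=0$ (and $q=1$, where the group is $\mathbb{Z}^2$) is degenerate and handled separately or trivially. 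Write $\pi:\mathrm{BS}(1,q)\to\mathbb{Z}$ for the homomorphism $b^ra^n\mapsto n$, whose kernel $N=\langle\!\langle b\rangle\!\rangle\cong\mathbb{Z}[1/q]$ is abelian.

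Step 1: decompose $S$ by $\pi$. Partition $S=S_{n_1}\cup\cdots\cup S_{n_k}$ with $n_1<\cdots<n_k$, where $S_{n}=S\cap\pi^{-1}(n)$. Since $e=\min(S)$ and $e\in\pi^{-1}(0)$, positivity in the order means all $n_i\ge 0$, with $S_0$ consisting of elements of $N$ that are $\ge e$. The order is lexicographic with $\pi$ as the leading coordinate, so $\max(A)<\min(B)$ whenever $A,B$ lie in different fibers in increasing order. The products $S_{n_i}S_{n_j}$ lie in fiber $n_i+n_j$. We then lower-bound $|S^2|$ via these fibers. The products $S_{n_i}S_{n_i}$ and $S_{n_i}S_{n_{i+1}}$ sit in the distinct fibers $2n_1<n_1+n_2<2n_2<\cdots<2n_k$. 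Inside a fiber, multiplication is a translation composed with scaling by $q^{n}$, so we can use the Cauchy–Davenport/Freiman bound in the torsion-free abelian group $N$: $|XY|\ge |X|+|Y|-1$. This gives
$$|S^2|\ge \sum_i(2|S_{n_i}|-1)+\sum_{i<k}(|S_{n_i}|+|S_{n_{i+1}}|-1)=3|S|-|S_{n_1}|-|S_{n_k}|-(2k-1)+\ldots$$
We refine this by also counting the products $S_{n_{i+1}}S_{n_i}$, which land in the same fiber as $S_{n_i}S_{n_{i+1}}$. The aim is to show that if $k\ge 2$ and $S$ is nonabelian, then $|S^2|\ge 3|S|-3$.

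Step 2: reduce to $k\le 2$ and structured fibers. If $S$ is contained in a single fiber, then $S\subseteq N\cdot a^{n}$ with $n=0$ (since $e\in S$), so $S\subseteq N$ is abelian. If $k\ge 2$, we exploit $e\in S_0$. The sets $S_0S_{n}$ and $S_nS_0$ both lie in fiber $n$. For $s=b^ra^n$ and $t=b^u\in N$ we have $ts=b^{u+r}a^n$ while $st=b^{r+q^{n}u}a^n$. So the two product sets are translates of $S_0$ and of $q^{n}S_0$ respectively. Their union is large unless $q^{n}=1$ or $S_0=\{e\}$. With $q\neq\pm1$ and $n\ge1$, a dilation argument (the sets $X$ and $q^nX$ are not translates of one another when $|X|\ge2$) gives $|S_0S_n\cup S_nS_0|\ge |S_0|+|S_n|$. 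This adds one to the count in each mixed fiber. When $S_0=\{e\}$, we instead compare fibers $n_1+n_2$ with $2n_2$, or use noncommuting pairs inside $S_{n_i}S_{n_j}\cup S_{n_j}S_{n_i}$. The conclusion of this step should be that a nonabelian $S$ forces at least $k-1+1$ extra elements beyond the bound of Step 1, giving $|S^2|\ge 3|S|-3$. The abelian alternative is that all elements lie in a cyclic-over-$N$ centralizer, e.g. $S\subseteq\langle b^ra\rangle$-type sets.

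Step 3: finish with Theorem \ref{NMS-Thm-3.3}. Alternatively, once each fiber $S_{n_i}$ is abelian, take $A=S_{n_1}\cup\cdots$ and $B$ equal to the top fiber. The hypotheses $\max(A)<\min(B)$ and $\max(A^2)\le\min(B^2)$ hold by the lexicographic structure, since $\pi(A^2)<\pi(B^2)$. We then apply Theorem \ref{NMS-Thm-3.3} inductively on $k$, using the count from Step 1 to check that the restricted sets still satisfy $3|\cdot|-4$. The main obstacle is Step 2 when $q<0$. There the order is only a right order, so the left-multiplied sets do not preserve the order within fibers. Moreover, $q^n$ alternates in sign, so the "dilation is not a translation" argument needs care, and this is exactly where $q=-1$ fails, because $q^{2}=1$. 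I would handle this by working in fiber coordinates $r\in\mathbb{Z}[1/q]\subset\mathbb{Q}$ directly, where only cardinalities matter and the order plays no role.
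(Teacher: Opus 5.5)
There is a genuine gap: the inequality the theorem rests on is never proved.

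\textbf{Steps 1 and 2.} Step 1 is mis-added. The two sums total $4|S|-|S_{n_1}|-|S_{n_k}|-(2k-1)$, with $k$ the number of fibers as in your notation, not $3|S|-\cdots$. With the correct figure, two fibers already force $|S^2|\ge 3|S|-3$, so that case is vacuous. But when the middle fibers are small the bound sinks to $2|S|-1$. To reach $3|S|-3$ you then need $2k-4-\sum_{1<i<k}|S_{n_i}|$ further products, i.e.\ $k-2$ of them when every fiber is a singleton.

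Step 2 is supposed to supply these, but its only concrete estimate is $|S_0S_n\cup S_nS_0|\ge |S_0|+|S_n|$. This is true for $|q|\ge 2$, $n\ne 0$, $|S_0|\ge 2$, though not for the reason you give. In fiber coordinates $S_0S_n$ and $S_nS_0$ are the sumsets $U+R$ and $R+q^nU$, not translates of $S_0$ and $q^nS_0$ (unless $|S_n|=1$), so you need the equality case of $|X+Y|\ge|X|+|Y|-1$. More importantly, the estimate has limited reach:
\begin{itemize}
\item it yields one extra product in the fiber $n_2$ only;
\item it says nothing about the mixed fibers $n_i+n_{i+1}$ with $i\ge 2$, which do not involve $S_0$;
\item it gives nothing when $S_0=\{e\}$.
\end{itemize}
For everything else you only say the conclusion ``should'' follow by comparing fibers or using noncommuting pairs. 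Many singleton fibers with $\pi(S)$ close to an arithmetic progression is exactly where the theorem has content, and no mechanism is offered for it.

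\textbf{Step 3.} This step does not close the gap. Theorem \ref{NMS-Thm-3.3} needs $A$ and $B$ abelian. For $|q|\ge 2$ and $n\ne0$, two elements $(r,n),(s,n)$ of one fiber commute only if $(r-s)(1-q^n)=0$, so you would first have to show the top fiber is a singleton. Moreover, abelianness of $A=S_{n_1}\cup\cdots\cup S_{n_{k-1}}$ is itself the inductive claim, which requires $|A^2|\le 3|A|-4$. The fiber count gives only $|S^2|\ge|A^2|+|B^2|+|S_{n_{k-1}}B|$, i.e.\ $|A^2|\le 3|A|-2-|S_{n_{k-1}}|$. That is one short when the penultimate fiber is a singleton.

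A lemma of exactly the missing type is \cite[Lemma 4.4]{Neetu_Mohan_Shankar} (Proposition \ref{Sec-5-Lemma-1} here): removing $\max(S)$ destroys at least three products unless $\langle S\rangle$ is abelian. The paper quotes the statement rather than reproving it, but its imported tools and its own proof of Theorem \ref{Theorem-3} show the scheme:
\begin{itemize}
\item For $q\ge0$ the group is ordered and \cite[Theorem 1.3]{Freiman_et_al_2014} applies.
\item For $q<-1$ one inducts on $|S|$, with base case Proposition \ref{NM-k=3}, and removes $\max(S)$.
\item If $\langle S\rangle$ is nonabelian, Proposition \ref{Sec-5-Lemma-1} gives $|A^2|\le|S^2|-3\le 3|A|-4$, so $A$ is abelian by induction.
\item One finishes with \cite[Theorem 3.1]{Neetu_Mohan_Shankar}, which handles an abelian set plus one element.
\end{itemize}
Your fiber decomposition might be developed into a genuine alternative, but as written it omits this key lemma rather than replacing it.
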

	
The aim of this article is to go one step further, that is, to study the inverse problems when $|S^{2}|=3|S|-3$. Mainly, we present 
 a complete characterization of the structure of a set $S$ subset of a right-ordered group, satisfying $|S^{2}|=3|S|-3$ under certain restrictions on the set $S$. Additionally, we describe the structure of the finite set $S$, a subset of the Baumslag-Solitar group $\mathrm{BS}(1,q)=\langle a,b\mid ab=b^qa\rangle$, where $q\in \mathbb{Z}$ and $q\neq -1$ with the identity element as their minimum, satisfying $|S^2|=3|S| - 3$. In particular, we prove the following results.
 \begin{theorem}\label{Theorem-1}
   Let $A$ be a nonempty finite abelian subset of a right-ordered group $G$ with $|A|\geq3$. Let $S=A\cup \{y\}$, where $y\in G\setminus A$. If $|S^{2}|=3|S|-3$, then either $\langle S\rangle$ is abelian, or there exist elements $x,g,y\in G$ such that $S$ is of the form $$S=\{x,xg,\ldots,xg^{k-2}, y\},$$ where $k=|S|$ and one of the following holds:
    \begin{enumerate}
    \item[\upshape(i)]   $[xg^{i},y]=g^{k-2-2i}$ for $i\in  [\![ 0,k-2 ]\!]$,
       \item[\upshape(ii)] $[x,y]=1, [y,g^{2}]=1$, and  $|S|=4$.
    \end{enumerate} 
\end{theorem}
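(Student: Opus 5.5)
The plan is to assume that $\langle S\rangle$ is nonabelian and to show that equality $|S^2|=3|S|-3$ forces one of the two configurations in the statement. Put $k=|S|$, so $|A|=k-1\ge 3$. Since $\langle A\rangle$ is an abelian right-ordered group, it is ordered. Hence Theorem \ref{ordered 3k-3}(i) gives $|A^2|\ge 2|A|-1=2k-3$. We decompose
$$S^2=A^2\cup Ay\cup yA\cup\{y^2\}.$$
Right-invariance of the order gives that the $k-1$ elements of $Ay$ are distinct and increase in the same order as $A$. For $yA$ we use the fact that conjugation by $y$ is injective, so $yA=y(Ay^{-1})y\cdots$ also has $k-1$ elements. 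The counting core of the proof is the same as in the proof of \cite[Theorem 3.1]{Neetu_Mohan_Shankar}. That argument locates $y$ relative to $A$ (below $\min A$, above $\max A$, or in between) and shows that if $\langle S\rangle$ is nonabelian then $A^2\cup Ay\cup yA\cup\{y^2\}$ contains at least $(2k-3)+k=3k-3$ elements. I would rerun that argument and track exactly where each inequality is used. Equality then forces every one of these inequalities to be tight.

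The first consequence of tightness is $|A^2|=2|A|-1$. In a torsion-free abelian (ordered) group, the classical extremal case of the $2k-1$ bound gives that $A$ is an arithmetic progression. Concretely, $A=\{x,xg,\ldots,xg^{k-2}\}$ with $g>e$ and $xg=gx$, which can also be read off from Theorem \ref{Freiman GP} with $t=|A^2|-|A|$. The second consequence is that $Ay\cup yA\cup\{y^2\}$ contributes exactly $k$ new elements beyond $A^2$. Since $Ay$ alone already supplies about $k-1$ new elements, in the relevant order position, $yA$ must overlap $Ay\cup A^2$ in all but essentially one element. Each coincidence $xg^iy=y\,xg^j$ can be rewritten as $y^{-1}xg^iy=xg^j$. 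Using $xg=gx$, this becomes
$$[xg^i,y]=g^{-i}x^{-1}\,xg^j=g^{j-i}.$$
So the whole problem reduces to identifying the injective partial map $i\mapsto j$ given by the overlaps.

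The main obstacle is determining this matching. Conjugation by $y$ maps the progression $A$ into a set, and $y^{-1}gy$ is a fixed element $h$ with $y^{-1}xg^iy=(y^{-1}xy)h^i$. Hence the matched indices must themselves form a progression in $j$: either $j=i+c$ or $j=c-i$. An order-reversing match is possible because the group is only right-ordered, and conjugation need not preserve the order. If the match is $j=i+c$ on all of $[\![0,k-2]\!]$, I expect the counting to force $c=0$. Then $y$ commutes with $x$ and $g$, so $\langle S\rangle$ is abelian, which is excluded. If the match is $j=c-i$ on the full index range, then the images must lie inside $[\![0,k-2]\!]$, which forces $c=k-2$ and yields (i). The remaining possibility is that only $k-2$ indices are matched, with one missing coincidence compensated by $y^2\in A^2\cup Ay\cup yA$. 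I would try to show that this forces $k$ small. Applying the relation to two indices $i$ and $i+2$ gives $y^{-1}g^2y=g^2$, while matching $i=0$ to $j=0$ gives $[x,y]=1$. The counting then leaves room only when $|A|=3$, which is case (ii). I expect that ruling out the partial matchings when $k\ge5$ requires a careful case split on the position of $y$ and of $y^2$ relative to $\min(A^2)$ and $\max(A^2)$, and I would handle it by the same order-chasing used in \cite{Neetu_Mohan_Shankar}.
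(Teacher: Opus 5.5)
Your outline has the same skeleton as the paper: split by the position of $y$ relative to $A$, reduce to $|A^2|=2|A|-1$ so that $A=\{x,xg,\dots,xg^{k-2}\}$ with $xg=gx$, then compare $yA$ with $Ay$. However, the step ``equality forces $|A^2|=2|A|-1$'' does not follow from tightness when $\min(A)<y<\max(A)$.

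\textbf{The middle case does not give $|A^2|=2|A|-1$ by counting.} When $y$ lies strictly inside, the only a priori information is $(yA\cup Ay)\cap A^2=\emptyset$ and $|yA\cup Ay|\ge k-1$. The bound $3k-3$ comes from $|A^2|\ge 2|A|$ in one branch and from the progression structure in the other, not from a uniform count $(2k-3)+k$. So $|S^2|=3k-3$ is compatible, as far as counting goes, with $|A^2|=2|A|$, $yA=Ay$ and $y^2\in A^2$. This configuration must be excluded by a separate argument, which in the paper is Proposition \ref{prop-3.8}: for $|A|\ge 4$, $|A^2|=2|A|$ makes $A$ a progression with its second or penultimate term deleted, and such a set admits neither an order-preserving nor an order-reversing matching $yA=Ay$ unless $y\in C_G(A)$.

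The configuration is not vacuous. For $|A|=3$ one can have $|A^2|=6$, $y$ strictly between $\min(A)$ and $\max(A)$, $y\notin C_G(A)$ and $yA=Ay$. Take $\mathbb{Z}^2\rtimes\mathbb{Z}$ with product $(h,n)(h',n')=(h+\sigma^n h',n+n')$ and $\sigma(u,v)=(v,u)$. Let $A=\{(0,1),(1,0),(1,1)\}\subset\mathbb{Z}^2$ and $y=((0,1),1)$. Use the right order whose positive cone is $\{((u,v),n): u+v+\alpha n>0\}$ together with the positive powers of $((1,-1),0)$, for an irrational $\alpha\in(0,1)$. In this $|A|=3$ situation the contradiction must come from $y^2\in A^2$. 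Conjugation by $y$ swaps two elements $a_1,a_2$ of $A$ and fixes the third. Since $y^2$ is fixed by this conjugation and the order rules out the square of the fixed element, we get $y^2=a_1a_2$, and then $(a_1y^{-1})^2=e$.

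\textbf{The matching analysis is left as expectation.} The step you call the main obstacle is where the proof actually lives. Your affine-matching heuristic only applies when two consecutive indices are matched. Right-ordered groups need not have unique roots, so $h^2=g^2$ does not give $h=g$, and case (ii) is exactly such a situation. Moreover, for $|A|=3$ the range constraint alone does not exclude slope $\pm 2$. The paper settles the matching with two order arguments that are missing from your plan.

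\emph{Outside case.} If $y>\max(A)$ or $y<\min(A)$, the paper proves $y^2\notin A^2$. This is a case analysis that uses the fact that $y^2\in A^2$ would commute with $A$, and locates $yx, yxg, yxg^2$ among the elements of $Ay$. Hence $yA=Ay$ is a total matching for every $k$, including $k=4$. The order-preserving matching puts $y$ in $C_G(A)$, so only the reversal $i\mapsto k-2-i$ survives, which is (i).

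\emph{Inside case.} If $\min(A)<y<\max(A)$, then $yx<x\cdot xg^{k-2}<yxg^{k-2}$ makes $i\mapsto yxg^i$ increasing, so $yA=Ay$ would put $y$ in $C_G(A)$. Hence $|yA\cup Ay|=k$ and $y^2\in A^2$. Comparing the two increasing chains $Ay$ and $yA$ inside this $k$-element set, again using that $y^2$ commutes with $A$, gives $xy=yx$ and $yxg^2=xg^2y$. For $k\ge 5$ it also gives $yxg^3=xg^3y$, whence $yg=gy$ and $y\in C_G(A)$.

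Without these arguments your sketch does not determine which partial matchings occur. So neither (i) nor (ii), nor the restriction $|S|=4$ in (ii), is actually derived.
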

 \begin{theorem}\label{Theorem-2}
 Let $A$ and $B$ be two nonempty abelian subsets of a right-ordered group $G$ with  $\max(A) < \min(B)$, $\max(A^{2})\leq \min(B^{2})$, $|A|\geq 4$, and $|B|\geq 3$.  If 
    \begin{equation*}
        \left|(A\cup B)^{2}\right| = 3|A\cup B|-3,
    \end{equation*}
    then  $\langle A\cup B\rangle$ is an abelian subgroup of $G$.
\end{theorem}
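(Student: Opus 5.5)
We plan to follow the method of \cite[Theorem 3.3]{Neetu_Mohan_Shankar} (Theorem~\ref{NMS-Thm-3.3}): count products lying in a ``low'' region coming from $A^2$, a ``middle'' region from $AB$ and $BA$, and a ``high'' region from $B^2$, and show that the single extra unit of room allowed by $3|A\cup B|-3$ (compared with $3|A\cup B|-4$) is not enough to accommodate any noncommutativity. Write $m=|A|\ge 4$, $n=|B|\ge 3$, and $S=A\cup B$. Since $A$ and $B$ are abelian, hence ordered, Theorem~\ref{ordered 3k-3}(i) gives $|A^2|\ge 2m-1$ and $|B^2|\ge 2n-1$. Right-invariance together with $\max(A)<\min(B)$ shows that every element of $A^2$ lies below $\max(A)\min(B)$, while every element of $B^2$ lies above $\max(A)\min(B)$. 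If $\max(A^2)=\min(B^2)$, the two sets share at most that one element. Hence
\[
|S^2|\ \ge\ |A^2|+|B^2|+|\,(AB\cup BA)\setminus(A^2\cup B^2)\,|-1 .
\]

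The key step is to bound from below the number of ``mixed'' products lying strictly between $\max(A^2)$ and $\min(B^2)$, or at least outside $A^2\cup B^2$. We will take the chain $a_1\min(B)<a_2\min(B)<\dots<a_m\min(B)$ together with $\max(A)b_1<\dots<\max(A)b_n$ (the latter is increasing when multiplying on the right, so we must use right-translates carefully, e.g.\ $a_m b_j$ is increasing in $b_j$ only if we write it as a right-multiplied element; we will instead use $a_i b_1$ and $a_m b_j$ in whichever orientation right-invariance allows). This yields about $m+n-1$ distinct mixed products, which already gives $|S^2|\ge 3(m+n)-4$. Equality $|S^2|=3(m+n)-3$ therefore leaves at most one ``extra'' element. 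In that situation we will argue that one of $|A^2|=2m-1$ or $|B^2|=2n-1$ holds. By the hypotheses $m\ge 4$ and $n\ge 3$, applying Theorem~\ref{Freiman GP} (or \cite[Theorem 4]{Freiman_et_al_2017}) to the abelian pieces then puts $A$, respectively $B$, into an arithmetic progression $\{x,xg,\dots\}$ in $C_G(A)$. Otherwise the mixed products achieve the minimum exactly.

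The main obstacle will be the case analysis showing that, whenever some pair $a\in A$, $b\in B$ fails to commute, one can exhibit a further mixed product (such as $ba$ distinct from all $ab'$) that is not yet counted. That would force $|S^2|\ge 3(m+n)-2$. We plan to do this in the same way as the proof of Theorem~\ref{NMS-Thm-3.3}. When the mixed products are exactly the minimal chain, comparing $ab$ with $ba$ forces relations of the form $ab=a'b'$. Combined with the progression structure of $A$ (size $\ge 4$) and $B$ (size $\ge 3$), these relations yield $[a,b]$ equal to powers of $g$ that must simultaneously be $e$. The size restrictions are exactly what rule out the small exceptional configurations of Theorem~\ref{Theorem-1}(ii). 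Finally, commutation of every $a\in A$ with every $b\in B$, together with the abelianness of $A$ and $B$, gives that $\langle A\cup B\rangle$ is abelian.
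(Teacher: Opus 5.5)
\textbf{The proposal has a genuine gap: the step that proves the theorem is only announced.} If you do your counting correctly, it reproduces the bound $|S^2|\ge 3k-3$ behind Theorem~\ref{NMS-Thm-3.3}, and here that bound is attained with equality. So there is no ``extra unit of room'' to exploit. The whole content of the theorem is showing that a nonabelian configuration cannot attain the bound. Deferring this to ``the same way as the proof of Theorem~\ref{NMS-Thm-3.3}'' does not work, because that argument stops exactly at $3k-3$.

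The paper assumes $x_{i+1}=\min(B)\notin C_G(A)$ and, in the case $\max(A^2)<\min(B^2)$, proceeds as follows. It takes $x_j$ to be the \emph{largest element of $A$ not commuting with $x_{i+1}$}. It proves algebraically that $A^2$, $B^2$, $Ax_{i+1}$, $x_jB$ are pairwise disjoint, except that $Ax_{i+1}\cap x_jB=\{x_jx_{i+1}\}$. Equality then forces $|A^2|=2|A|-1$ \emph{and} $|B^2|=2|B|-1$, so both are progressions by Proposition~\ref{Prop-3.1}, and $S^2=A^2\cup B^2\cup Ax_{i+1}\cup x_jB$ exactly. Lemma~\ref{Lemma 4.1} gives $x_i\in\{x_j,x_{j+1}\}$. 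Finally, the specific product $x_{i+1}x_j\notin A^2\cup B^2$ must lie in $Ax_{i+1}$ or in $x_jB$, and each alternative is refuted by a case analysis on the orientations of the two progressions. The hypotheses $|A|\ge 4$, $|B|\ge 3$ enter only there, to guarantee the elements $x_a<x_j$, $x_{j-2},x_{j-1}$ and $x_{i+3}$; they have nothing to do with Theorem~\ref{Theorem-1}(ii). In the case $\max(A^2)=\min(B^2)$ the paper uses $Ax_{i+1}$ and $Bx_i$ instead. It shows that equality forces $|Ax_{i+1}\cup x_{i+1}A|=|A|+1$, which in turn forces $x_{i+1}$ to commute with $x_1$ and $x_2$. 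None of these ideas appears in your outline.

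\textbf{Several ingredients of your setup are also incorrect.}

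First, right-invariance gives $A^2<\min(B)\max(A)$, not $A^2<\max(A)\min(B)$. In fact, when $\max(A^2)=\min(B^2)$ one has $\max(A)\min(B)<\min(B)^2=\max(A)^2$. So the mixed products do not occupy a ``middle region'', and their disjointness from $A^2\cup B^2$ must be proved algebraically.

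Second, with $\max(A)$ as left factor, $\max(A)B\cap B^2=\emptyset$ needs $\max(A)\notin\langle B\rangle$. Nothing guarantees this once $\max(A)$ commutes with $\min(B)$: if $B=\{y,yt,\dots\}$ with $t>e$ and $\max(A)=yt^{-1}$, then $\max(A)\,yt=y^2\in B^2$. This is exactly why the paper works with $x_j$ rather than $\max(A)$. Likewise, $A\min(B)\cap A^2=\emptyset$ requires $\min(B)\notin C_G(A)$, which you never impose.

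Third, your arithmetic is off. In the strict case $(2m-1)+(2n-1)+(m+n-1)=3k-3$, not $3k-4$. In the case $\max(A^2)=\min(B^2)$, your family $A\min(B)\cup\max(A)B$ yields only $3k-4$; this is why the paper switches to the order-separated pair $Ax_{i+1}$, $Bx_i$ of total size $k$.

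Finally, concluding that only one of $A$, $B$ is a progression is too weak. The contradictions in Proposition~\ref{section-4-prop-4.2} (e.g.\ its last case) use the progression structure of $B$ as well as that of $A$.
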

 \noindent The proof of Theorem \ref{Theorem-1} and Theorem \ref{Theorem-2} is given in Section \ref{section-3} and Section \ref{section-4}, respectively.
 \begin{theorem}\label{Theorem-3}
  Let $q$ be an integer with $q \neq -1$.  Let $S$ be a nonempty finite subset of $\mathrm{BS}(1,q)$ with $\left|S\right|=k\geq 5$ and  $e =\min(S)$. If $\left|S^{2}\right| = 3k-3$, then  either $\langle S\rangle$ is an  abelian subgroup, or there exist elements $x,g\in \mathrm{BS}(1,q)$ such that $S$ is of the form $$S=\{e, x, xg, xg^{2}, xg^{3},\ldots, xg^{k-2}\},$$ with $xg\neq gx$ and $(xg^{i})^{2}=x^{2}$ for each $0\leq i\leq k-2$.
   
\end{theorem}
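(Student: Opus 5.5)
We would prove Theorem \ref{Theorem-3} by induction on $k=|S|$, peeling off the maximal element. Write $S=\{e=s_0<s_1<\dots<s_{k-1}\}$ in the right-order of $\mathrm{BS}(1,q)$. Put $A=S\setminus\{s_{k-1}\}$ and $y=s_{k-1}$.

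\textbf{Lower bound for $|S^2|$.} The starting point is the standard estimate in right-ordered groups: because $e=\min(S)$, we have $S\subseteq S^2$. Moreover, $y^2$ and $s_{k-2}y$ (together with, in favourable cases, $ys_{k-2}$) lie outside $A^2$. Right-invariance gives $s_iy<s_jy$ for $i<j$, so the elements of $Ay\cup\{y^2\}$ exceed $\max(A^2)$ except for a controlled number of coincidences. This yields
\[
|S^2|\ \ge\ |A^2|+3,
\]
and, whenever $y$ fails to commute with enough elements of $A$, a strictly larger gain.

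\textbf{The dichotomy.} From $|S^2|=3k-3$ we get $|A^2|\le 3(k-1)-3$. There are two cases.

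\emph{Case $|A^2|\le 3|A|-4$.} Theorem \ref{Theorem for BS(1,q)} applies, since $e=\min(A)$ and $|A|\ge 4$. So $A$ is abelian, and Theorem \ref{Theorem-1} applies to $S=A\cup\{y\}$. Either $\langle S\rangle$ is abelian, or $S=\{x,xg,\dots,xg^{k-2},y\}$ with (i) or (ii). Alternative (ii) is excluded by $k\ge5$. Under (i) we use $e=\min(S)$ together with the explicit structure of $\mathrm{BS}(1,q)$. Every element has the form $a^mb^{r}$ with $r\in\mathbb{Z}[1/q]$, and the commutator subgroup is the abelian normal subgroup $\{b^r\}$. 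Then $[xg^i,y]=g^{k-2-2i}$ forces $g$ into the commutator subgroup $N=\langle\!\langle b\rangle\!\rangle$. Abelian subsets of $\mathrm{BS}(1,q)$ containing a nontrivial element of $N$ lie in $N$ (the centralizer of $b^r\ne e$ is $N$ when $q\ne \pm1$; the case $q=1$ is abelian anyway). Hence $x\in N$ or $x$ is the identity, and matching $e\in S$ gives $x=e$ (after a reindexing, swapping the roles so that the progression contains $e$). Concretely, we then rename so that the progression is $\{x,xg,\dots\}$ and the extra element is $e$.

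\emph{Case $|A^2|=3|A|-3$.} If $|A|\ge5$, the induction hypothesis describes $A$. Combined with $|S^2|=|A^2|+3$ exactly (equality in the first step), this forces $y$ to commute with, or to be in the prescribed relation with, the top elements of $A$. Equality in that step also forces $Ay\cup\{y^2\}$ to overlap $A^2$ maximally. For $|A|=4$, i.e.\ $k=5$, we do a direct finite check, using that $S\setminus\{e\}$ then has four elements.

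\textbf{The identity $(xg^i)^2=x^2$.} Write $x=a^mb^{u}$ with $m\ne0$ (it is not in $N$ since $xg\ne gx$) and $g=b^{r}$. Then
\[
(xg^i)^2=x^2\,(x^{-1}g^ix)\,g^i=x^2\, b^{ir(q^{m}+1)} \quad\text{up to sign conventions},
\]
so the condition amounts to $q^m=-1$ in the appropriate sense, i.e.\ $x$ inverts $g$. We expect to derive this from the collapse count: for $|S^2|$ to be as small as $3k-3$, the $k-1$ squares $(xg^i)^2$ must coincide, because $S^2\supseteq S\cup (S\setminus\{e\})^2$ and the products $xg^i\cdot xg^j$ depend only on $i-j$ under the inversion. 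We will verify that otherwise $|S^2|\ge 3k-2$.

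\textbf{Main obstacle.} The hard step is the inductive case $|A^2|=3|A|-3$ with $A$ nonabelian. There we must show the new maximum $y$ extends the progression as $xg^{k-2}$ rather than breaking the pattern. This requires careful counting of which products $ys_i$ and $s_iy$ fall in $A^2$, using right-invariance only. We would try first to show $y=s_{k-2}g$, by comparing $y^2$, $s_{k-2}y$ and $ys_{k-2}$ against $\max(A^2)=s_{k-2}^2$.
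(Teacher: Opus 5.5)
Your skeleton matches the paper's: induction on $k$ removing $y=\max S$, Theorem~\ref{Theorem for BS(1,q)} to pin down $|A^2|$, Theorem~\ref{Theorem-1} for abelian $A$, and the induction hypothesis plus an extension step for nonabelian $A$. However, the steps that carry the proof are only announced, and the heuristics you give for them are wrong.

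\begin{itemize}
\item \emph{Extension step.} This is the paper's Proposition~\ref{main lemma}. You plan to compare against $\max(A^2)=s_{k-2}^2$. In the configuration supplied by the induction hypothesis, $s_{k-2}^2=s_1^2$ lies strictly inside $A^2$: indeed $s_1^2<s_2s_1<\dots<s_{k-2}s_1=\max(A^2)$. The new products turn out to be $y$, $s_1y$, $ys_1$, while $y^2=s_1^2\in A^2$. The paper pins $y$ down by proving $s_1y<ys_1$, $s_1y\notin A^2$, $s_2y\ne ys_1$ and $s_1y<s_1s_{k-2}$, which together force $s_2y=s_1s_{k-2}$.
\item \emph{Base case $k=5$.} This is not a routine check. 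It needs the classification of nonabelian $4$-sets with $|A^2|=9$ (Proposition~\ref{lemma 2-theorem-2.13}), and the extra form $\{e,x,xg,(xg)^2\}$ has to be excluded separately (Proposition~\ref{S2-geq-13}).
\item \emph{The bound $|S^2|\ge|A^2|+3$.} It is false without an abelian alternative: the progression $\{e,g,\dots,g^{k-1}\}$ with $g>e$ gains only $2$. Also, right-invariance does not place $y^2$ or $Ay$ above $A^2$. The correct input is Proposition~\ref{Sec-5-Lemma-1}, valid for $q<-1$; the case $q\ge 0$ is settled by Theorem~\ref{ordered 3k-3}.
\item \emph{The abelian-$A$ case.} It needs no renaming and cannot produce the exceptional form. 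Since $e=\min S$ is an endpoint of the progression, alternative (i) of Theorem~\ref{Theorem-1} gives $e=[e,y]=g^{\pm(k-2)}$. This is impossible in a torsion-free group.
\end{itemize}

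The idea that closes everything is already in your last computation: $(xg)^2=x^2$ would need $q^{m}=-1$, which has no solution when $q\ne -1$. Equivalently, squaring is injective on $\mathbb{Z}[1/q]\rtimes\mathbb{Z}$, since $(r,n)^2=((1+q^n)r,2n)$ and $1+q^n\ne 0$. So $(xg)^2=x^2$ forces $g=e$, and the nonabelian alternative in the statement (which includes $i=1$ because $k\ge 5$) can never occur.

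Instead of trying to derive this impossible identity from a collapse count, use it to prove a stronger claim by induction on $|S|\ge 3$: for $q<-1$, if $e=\min S$ and $|S^2|\le 3|S|-3$, then $\langle S\rangle$ is abelian.
\begin{itemize}
\item For $|S|=3$, use Proposition~\ref{NM-k=3} when $|S^2|\le 5$. When $|S^2|=6$, use Proposition~\ref{lemma 1-theorem-2.13} together with injectivity of squaring.
\item For $|S|\ge 4$, suppose $A$ is nonabelian. Then Proposition~\ref{Sec-5-Lemma-1} gives $|A^2|\le |S^2|-3\le 3|A|-3$, so $A$ is abelian by induction, a contradiction.
\item If $A$ is abelian, use Theorem~\ref{Theorem for BS(1,q)} when $|S^2|\le 3|S|-4$. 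Otherwise use Proposition~\ref{prop-2} together with the endpoint argument above.
\end{itemize}
This bypasses Propositions~\ref{lemma 2-theorem-2.13}--\ref{For k=5}, whose nonabelian configurations cannot in fact occur in $\mathrm{BS}(1,q)$ for $q\ne -1$.
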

\noindent The proof of the above theorem is given in Section \ref{section-6}.

\section{Proof of Theorem \ref{Theorem-1}}\label{section-3}

To prove Theorem \ref{Theorem-1}, the following auxiliary propositions are required.
\begin{prop}\textup{\cite[Proposition 3.1]{Neetu_Mohan_Shankar}}\label{Prop-3.1}
		Let $G$ be a right-ordered group, and let $A$ be a finite subset of $G$ with $|A|\geq 2$. Then \begin{equation*}\label{2k-1}
			|A^{2}|\geq 2|A|-1.
		\end{equation*} Further, if  $|A^{2}|=2|A|-1$, then there exist $x,g\in G$ such that $g\neq e$ and $A=\{x, xg,\ldots,xg^{|A|-1}\}$ with either $xg=gx$ or $xgx^{-1} = g^{-1}$.
	\end{prop}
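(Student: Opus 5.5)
Write $A=\{a_1<a_2<\cdots<a_k\}$ in the right order, where $k=|A|\ge 2$. The plan is to prove the lower bound by exhibiting $2k-1$ distinct products, and then to use rigidity in the equality case. The chain to use is
\[
a_1a_1<a_2a_1<\cdots<a_ka_1\quad\text{and}\quad a_ka_1<a_ka_2<\cdots<a_ka_k .
\]
The first half is strictly increasing by right invariance. The second half is harder: multiplying on the left is not monotone in a right-ordered group, so we cannot conclude $a_ka_i<a_ka_{i+1}$ directly. Instead, for each $i$ compare the maximum of $Aa_i$ with the maximum of $Aa_{i+1}$. Since $a_i<a_{i+1}$, the translates $Aa_i$ and $Aa_{i+1}$ are distinct sets of size $k$. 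A cleaner route is an induction on $k$ via $\max$. Put $m=\max(A^2)$ and $A'=A\setminus\{a\}$ for a suitable extreme $a$. By induction $|A'^2|\ge 2k-3$, and we need two new products not in $A'^2$. Take $a=a_k$. Then $a_ka_j>a_ia_j$ for all $i<k$, so the largest element of $a_kA$, say $a_ka_{j_0}$, exceeds every element of $A'a_{j_0}$. Also, for every $j$, $a_ka_j$ is the largest element of the column $Aa_j$. Hence $\max(A^2)=\max_j a_ka_j$ lies outside $A'^2$ (it exceeds every $a_ia_j$ with $i<k$ since $a_ia_j<a_ka_j\le\max$). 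To get a second new element, apply the symmetric argument with $a_1$: $\min(A^2)=\min_j a_1a_j$ is not in $(A\setminus\{a_1\})^2$. Since removing a single element does not handle both at once, run the induction removing $a_k$, and get the second new element by showing that the second largest element of $a_kA$ is also not in $A'^2$, or, if it is, that some other new element appears. This is where I expect the main difficulty. What I would try first is the standard Freiman-style argument for left-ordered or right-ordered structures. Consider $X=a_kA$, of size $k$, all distinct, together with $Aa_{j}$ for the index $j$ with $a_ka_j=\min(a_kA)$. Every element of $Aa_j$ is $\le a_ka_j$, with equality only for $a_k$. So $Aa_j\cup a_kA$ has exactly $2k-1$ elements, because the two sets meet only in $a_ka_j$. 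This gives $|A^2|\ge 2k-1$ directly and cleanly, with no induction.

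For the equality case, $|A^2|=2k-1$ forces $A^2=Aa_j\cup a_kA$ with $j$ as above. The same construction applies to every index $j'$ for which $a_ka_{j'}$ is minimal in $a_kA$, and also symmetrically to $a_1$: every element of $Aa_{j''}$ is $\ge a_1a_{j''}$, so with $a_1a_{j''}=\max(a_1A)$ we get $A^2=a_1A\cup Aa_{j''}$. Comparing these decompositions, combined with right cancellation, forces $A^2$ to be covered by the right translates $Aa_j$ and $Aa_{j''}$ together with left translates of $A$. I would then argue that $Aa_2\subseteq Aa_1\cup\{a_ka_2\}$. Doing this successively for each column $Aa_{i+1}$ versus $Aa_i$ gives $a_{i}a_{t+1}=a_{i+1}a_{t}$-type relations, or $A a_{i+1}=(Aa_i\setminus\{a_1a_i\})\cup\{a_ka_{i+1}\}$. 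The latter means that right multiplication by $g_i=a_i^{-1}a_{i+1}$ shifts $A$ by one step in the order, so that $a_{s}g_i=a_{s+1}$ and $g_i$ is independent of $i$. This makes $A=\{x,xg,\dots,xg^{k-1}\}$ with $x=a_1$ and $g=a_1^{-1}a_2>e$.

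Finally, for the commutation relation: equality and the shift structure give $A^2=\{x g^{i} x g^{j}\}$ of size $2k-1$, so the product $xg^ixg^j$ must depend only on a single parameter. The two natural solutions of this are $x^{-1}gx=g$ (so products are $x^2g^{i+j}$) or $x^{-1}gx=g^{-1}$ (so $xg^ix=x^2g^{-i}$ and the products are $x^2g^{j-i}$). To establish this, I would apply the equality to $k=2$ and $k=3$ subconfigurations $\{x,xg,xg^2\}$ to get $gxg\in\{x g x g, \ldots\}$ identities, then solve for $x^{-1}gx\in\{g,g^{-1}\}$. This uses that in a right-ordered (hence torsion-free) group, $h^n=g^n$-type relations force equality. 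The resulting condition is equivalent to $xgx^{-1}=g^{\pm1}$.
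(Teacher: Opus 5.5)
Your lower bound is correct and is the cleanest part of the proposal. With $a_ka_j=\min(a_kA)$, right invariance gives $a_ia_j<a_ka_j$ for $i<k$, while all of $a_kA$ is $\ge a_ka_j$, so $|Aa_j\cup a_kA|=2k-1$. The earlier attempts with chains and induction can be dropped.

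The equality case, however, has a genuine gap. Several of your proposed steps are false precisely for the sets in the second alternative of the statement. Take $A=\{x,xg,\ldots,xg^{k-1}\}$ with $xgx^{-1}=g^{-1}$, listed increasingly (replace $(x,g)$ by $(xg^{k-1},g^{-1})$ if necessary). Such sets exist, e.g.\ $\{(1,1),\ldots,(k,1)\}\subseteq\mathbb{Z}\rtimes\mathbb{Z}$ from the paper's last example. Then $a_ia_l=x^2g^{l-i}$, and $x<xg$ forces $g<e$: in a right order $u<v$ means $vu^{-1}>e$, so $a_1<a_2$ gives $a_2a_1^{-1}>e$, not $a_1^{-1}a_2>e$. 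Here $Aa_2=(Aa_1\setminus\{a_ka_1\})\cup\{a_1a_2\}$ with $a_1a_2=x^2g\notin Aa_1$, while $a_ka_2\in Aa_1$. So your step $Aa_2\subseteq Aa_1\cup\{a_ka_2\}$ fails, and the column-shift mechanism built on it can only ever produce the commuting alternative. The closing step is not sound either. Right-ordered groups do not have unique roots (here $(xg^i)^2=x^2$ for all $i$). Moreover, $|A^2|=2|A|-1$ does not pass to subsets in general, so restricting to $2$- and $3$-element subconfigurations needs its own justification.

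What is missing is a rigidity argument that handles both alternatives at once, and your two decompositions already contain it. Write $A^2=\{c_1<\cdots<c_{2k-1}\}$. The decompositions $A^2=Aa_j\cup a_kA=a_1A\cup Aa_{j''}$ show that $a_1A=\{c_1,\ldots,c_k\}$ and $a_kA=\{c_k,\ldots,c_{2k-1}\}$. Each column $Aa_l$ consists of $k$ elements running from $a_1a_l=c_{\sigma(l)}$ to $a_ka_l=c_{\rho(l)}$, where $\sigma,\rho$ are bijections onto $[\![1,k]\!]$ and $[\![k,2k-1]\!]$. Hence $\rho(l)-\sigma(l)\ge k-1$ for every $l$. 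Since $\sum_l\rho(l)-\sum_l\sigma(l)=k(k-1)$, equality holds throughout and $a_ia_l=c_{\sigma(l)+i-1}$. With $l_s=\sigma^{-1}(s)$ this gives $a_{i+1}a_{l_s}=a_ia_{l_{s+1}}$, i.e.\ $a_i^{-1}a_{i+1}=a_{l_{s+1}}a_{l_s}^{-1}=:g$ for all $i,s\in[\![1,k-1]\!]$. Thus $A=\{x,xg,\ldots,xg^{k-1}\}=\{y,gy,\ldots,g^{k-1}y\}$ with $x=a_1$, $y=a_{l_1}$, $g\neq e$. The first list is increasing, and the second is increasing iff $g>e$. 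Matching the two orderings gives $x=y$ and $xg=gx$ when $g>e$, and $x=g^{k-1}y$, $xg=g^{k-2}y=g^{-1}x$ when $g<e$.
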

    \begin{prop}\label{2|A|}
         Let $A$ be a nonempty finite abelian subset of a right-ordered group $G$ such that $|A^2| = 2|A|$. Then $$A = \{x,xg,xg^{2}, \ldots, xg^{|A|}\}\setminus \{y\},$$ where $y\in \{xg,xg^{|A|-1}\}$.  
    \end{prop}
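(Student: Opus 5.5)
Since $A$ is abelian, $\langle A\rangle$ is an abelian subgroup of the right-ordered group $G$, hence an abelian right-ordered group and therefore an ordered group. It is also torsion-free, and its finitely generated subgroup $\langle A\rangle$ is isomorphic to $\mathbb{Z}^d$ for some $d$. Moreover, the right-invariant order restricted to $\langle A\rangle$ is a bi-invariant total order. So the plan is to transfer the problem to a finite set in a torsion-free abelian group (written additively) and use the classical structure theory for sumsets of size $2|A|$.

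First I would note that any ordered abelian group embeds order-preservingly in a way that lets us use the following rigidity. Take $\varphi:\langle A\rangle\cong\mathbb{Z}^d$ and choose a generic linear functional (or a Freiman isomorphism of order $2$) mapping $A$ into $\mathbb{Z}$ injectively, with $|\varphi(A)+\varphi(A)|=|A^2|$. Freiman's lemma says that if a finite $A\subset\mathbb{Z}^d$ is not contained in a line, then $|2A|\ge 3|A|-3$. Since $2|A|<3|A|-3$ once $|A|\ge 4$, for $|A|\ge4$ the set $A$ lies in a coset $x\langle h\rangle$ with $h$ of infinite order. (For $|A|\le 3$, case-check directly: $|A|=1$ gives $|A^2|=1\ne 2$; $|A|=2$ forces $|A^2|=3\ne 4$ in a torsion-free group; for $|A|=3$, $|A^2|=6$ means all sums are distinct, which is not of the form in the statement. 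So the proposition implicitly needs $|A|\ge 4$, or a separate remark must be made.) Thus $A=\{xh^{a_1},\dots,xh^{a_k}\}$ with integers $0=a_1<\dots<a_k$ and $\gcd(a_i)=1$ after normalising, choosing $h>e$ so the order matches the integer order.

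The problem is now to find all $B=\{a_1,\dots,a_k\}\subset\mathbb{Z}$ with $\gcd$ $1$, $\min B=0$, and $|2B|=2k$. By Freiman's $3k-4$ theorem (Theorem 2.1, applicable since $2k\le 3k-4$ for $k\ge4$), $B$ lies in an arithmetic progression of length $|2B|-k+1=k+1$. The normalisation $\gcd=1$, $\min=0$ makes this progression $\{0,1,\dots,k\}$, so $B=[\![0,k]\!]\setminus\{j\}$ with $0<j<k$. Writing $g=h$ gives $A=\{x,xg,\dots,xg^{k}\}\setminus\{xg^j\}$.

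The main step is the sumset computation showing $j\in\{1,k-1\}$. If $2\le j\le k-2$, every integer in $[\![0,2k]\!]$ is a sum of two elements of $B$. For $2j$, use $(j-1)+(j+1)$. For $t$ near $j$, use $0+t$ or $1+(t-1)$, noting $j-1,j+1\in B$ and $k\ge4$. Then $|2B|=2k+1$, a contradiction. If $j=1$, then $1\notin 2B$ (as $0+1$ needs $1$), and every other value in $[\![0,2k]\!]$ is attained, so $|2B|=2k$. The case $j=k-1$ is symmetric, with $2k-1$ missing. Hence $y\in\{xg,xg^{|A|-1}\}$, where $|A|=k$, matching the statement. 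The only delicate points are the reduction to one dimension, which relies on Freiman's lemma for torsion-free abelian groups, and the small cases.
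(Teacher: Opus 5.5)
Your argument is correct for $|A|\ge 4$ and is essentially the paper's: both place $A$ inside $\{x,xg,\dots,xg^{|A|}\}$ via a $3k-4$ theorem and then check which deleted term leaves exactly $2|A|$ products. The only difference is that the paper cites the ordered-group version (Theorem~\ref{Freiman GP}) directly, whereas you rebuild it from $\langle A\rangle\cong\mathbb{Z}^d$, Freiman's lemma and the integer theorem, and you actually carry out the deletion computation that the paper calls easy. You are also right that $|A|\ge 4$ is needed, since the paper's appeal to Theorem~\ref{Freiman GP} silently requires $2|A|\le 3|A|-4$. However, your stated reason for $|A|=3$ is wrong: $\{0,1,3\}=[\![0,3]\!]\setminus\{2\}$ has six distinct sums and is of the stated form, and the genuine counterexamples are sets such as $\{0,1,5\}\subset\mathbb{Z}$.
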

   \begin{proof}
      Since  $A$ is abelian, the subgroup generated by $A$ is an ordered group. Using Theorem \ref{Freiman GP}, we get that $A\subseteq \{x,xg, \ldots,xg^{|A|}\}$, where $x,g \in G$. It is easy to see that $|A^2| =2|A|$ if $A = \{x,xg,xg^{2}, \ldots, xg^{|A|}\}\setminus \{y\}$, where $y\in \{xg,xg^{|A|-1}\}$, otherwise $|A^2| = 2|A|+1$.
   \end{proof}
    
\begin{prop}\textup{\cite[Proposition 2.3]{Neetu_Mohan_Shankar}}\label{NMS-Pro-1} 
Let $A$ be a nonempty finite subset of a right-ordered group $G$.  Let  $y\in G$ such that $y>\max(A)$ and  $y^{2}\leq \max(A^{2})$. Then $$|yA\cup Ay|\geq |A|+1.$$      \end{prop}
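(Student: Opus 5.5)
The plan is to exhibit one element of $yA$ that is strictly larger than every element of $Ay$. Since $Ay$ already has $|A|$ distinct elements, this forces $|yA\cup Ay|\geq |A|+1$. Throughout, the only tool is the defining property of a right order, namely that $u\le v$ implies $uz\le vz$, used together with the injectivity of multiplication.

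Write $a=\max(A)$. First, right multiplication by $y$ is injective, so $|Ay|=|A|$. It is also order-preserving, so $\max(Ay)=ay$.

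Next, I will locate $\max(A^{2})$. Any product $a'a''\in A^{2}$ satisfies $a'a''\le a a''$, obtained by right-multiplying $a'\le a$ by $a''$. Hence $\max(A^{2})=ac$ for some $c\in A$.

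Now I chain the inequalities, each coming from a right multiplication:
\begin{itemize}
\item From $a<y$, right-multiplying by $y$ gives $ay<y^{2}$.
\item The hypothesis gives $y^{2}\le \max(A^{2})=ac$.
\item From $a<y$, right-multiplying by $c$ gives $ac<yc$.
\end{itemize}
Together these yield
\[
\max(Ay)=ay<y^{2}\le ac<yc .
\]
So $yc\in yA$ exceeds every element of $Ay$, and therefore $yc\notin Ay$. Consequently $yA\cup Ay\supseteq Ay\cup\{yc\}$, which has $|A|+1$ elements.

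The main obstacle is the middle step: realizing that $\max(A^{2})$ must have the form $\max(A)\cdot c$. Only right multiplication preserves the order, so the maximal product must have $\max(A)$ in its left factor. That identification is exactly what makes the final comparison $ac<yc$ legitimate.
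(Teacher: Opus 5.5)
Your proof is correct and complete. The paper does not reprove this statement (it is quoted from \cite{Neetu_Mohan_Shankar}), but your chain $\max(Ay)=ay<y^{2}\le\max(A^{2})=ac<yc$ is the same argument the paper uses inline in Case 1 of the proof of Proposition \ref{prop-2}. There $A$ is abelian, so $c=\max(A)$, while your version handles general $A$ by observing that $\max(A^{2})$ has $\max(A)$ as its left factor.
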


\begin{prop}\textup{\cite[Proposition 2.4]{Neetu_Mohan_Shankar}}\label{NMS-Pro-2} 
Let $A$ be a nonempty finite subset of a right-ordered group $G$.  Let  $y\in G$ 
 such that $y<\min(A)$ and  $y^{2}\geq \min(A^{2})$. Then $$|yA\cup Ay|\geq |A| +1. $$ \end{prop}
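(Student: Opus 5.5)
The plan is to compare the two translates $yA$ and $Ay$, which both have exactly $|A|$ elements by cancellation in $G$. Hence $|yA\cup Ay|\geq |A|$, and equality holds if and only if $yA=Ay$. So it suffices to assume $yA=Ay$ and derive a contradiction from the hypotheses $y<\min(A)$ and $y^{2}\geq \min(A^{2})$. Throughout, the only tool is that right multiplication preserves the order; left multiplication need not.

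First, I will locate $\min(A^2)$. Put $a_0=\min(A)$. For a fixed right factor $b\in A$, right-invariance gives $ab\geq a_0b$ for all $a\in A$. Hence $\min(A^{2})=a_0b^{*}$ for some $b^{*}\in A$; we do not need to know which $b^*$. The hypothesis then reads $y^{2}\geq a_0b^{*}$.

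Next I will build a chain of inequalities using only right multiplication.
\begin{enumerate}
\item Right-multiplying $y<a_0$ by $b^{*}$ gives $yb^{*}<a_0b^{*}\leq y^{2}$.
\item Under the assumption $yA=Ay$, the element $yb^{*}$ lies in $Ay$, so $yb^{*}=cy$ for some $c\in A$.
\item Since $c\geq a_0$, right multiplication by $y$ gives $cy\geq a_0y$.
\item Right-multiplying $a_0>y$ by $y$ gives $a_0y>y^{2}$.
\end{enumerate}
Combining these, $yb^{*}=cy\geq a_0y>y^{2}\geq a_0b^{*}>yb^{*}$, which is a contradiction. Therefore $yA\neq Ay$, and $|yA\cup Ay|\geq |A|+1$.

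The only delicate step is the first one: choosing the element $yb^{*}$ to test, where $b^*$ realizes $\min(A^2)$ as a product with left factor $a_0$. Once that choice is made, every comparison uses only right-invariance. This argument is the mirror image of the one expected for Proposition \ref{NMS-Pro-1}, with the roles of maximum and minimum interchanged.
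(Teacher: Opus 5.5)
Your proof is correct and complete. Since $\min(A^{2})=a_0b^{*}$ for some $b^{*}\in A$, you get $yb^{*}<a_0b^{*}\le y^{2}<a_0y\le cy$ for every $c\in A$, so $yb^{*}\in yA\setminus Ay$, and every step uses only right-invariance. The paper does not reprove this statement (it is quoted from \cite{Neetu_Mohan_Shankar}), so there is no in-paper proof to compare against; separating $yb^{*}$ from $Ay$ by the element $y^{2}$ is the natural route, and it is the mirror image of the argument for Proposition \ref{NMS-Pro-1}.
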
 
 \noindent It is easy to deduce the following proposition from the above two propositions.
\begin{prop}\label{Coro-1}
    Let $A$ be a nonempty finite subset of a right-ordered group $G$.  Let  $y\in G$ 
 such that $y^{2}\in A^{2}$, and  either $y<\min(A)$ or $y>\max(A)$. Then $|yA\cup Ay|\geq |A|+1$.    
\end{prop}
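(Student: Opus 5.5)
The plan is to reduce Proposition \ref{Coro-1} to Propositions \ref{NMS-Pro-1} and \ref{NMS-Pro-2} by splitting into the two cases $y>\max(A)$ and $y<\min(A)$. In each case only one extra inequality about $y^{2}$ has to be checked.

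\emph{Case $y>\max(A)$.} Proposition \ref{NMS-Pro-1} needs $y^{2}\leq \max(A^{2})$. By hypothesis $y^{2}\in A^{2}$, so $y^{2}\leq \max(A^{2})$ immediately. Note that $\max(A^{2})$ exists because $A$ is finite and nonempty, and $A^{2}$ is totally ordered. Proposition \ref{NMS-Pro-1} then gives $|yA\cup Ay|\geq |A|+1$.

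\emph{Case $y<\min(A)$.} Proposition \ref{NMS-Pro-2} needs $y^{2}\geq \min(A^{2})$. Again $y^{2}\in A^{2}$ gives $y^{2}\geq \min(A^{2})$ directly, and Proposition \ref{NMS-Pro-2} yields the same bound.

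Since the hypothesis says exactly one of $y<\min(A)$ or $y>\max(A)$ holds, the two cases cover everything, and the proof is complete. There is no real obstacle here: the only point to note is that membership $y^{2}\in A^{2}$ is a stronger assumption than the inequality conditions in the two cited propositions. Hence no use of the right-invariance of the order is needed beyond what is already built into those propositions.
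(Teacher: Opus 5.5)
Your proposal is correct and matches the paper's proof: from $y^{2}\in A^{2}$ you get $\min(A^{2})\leq y^{2}\leq \max(A^{2})$, and then Proposition \ref{NMS-Pro-1} handles $y>\max(A)$ while Proposition \ref{NMS-Pro-2} handles $y<\min(A)$. The paper makes exactly this reduction, only more tersely and without splitting into two explicit cases.
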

\begin{proof} Since $y^{2}\in A^{2}$, we have $$\min(A^{2}) \leq y^{2} \leq \max(A^{2}).$$ Therefore, by Proposition \ref{NMS-Pro-1} and Proposition \ref{NMS-Pro-2}, we have $|yA\cup Ay|\geq |A|+1$.
\end{proof}
\begin{prop}\label{prop-1} 
   Let $A=\{x,xg,xg^{2},\ldots,xg^{k-1}\}$ be a subset of $k\geq 3$ elements of a right-ordered group $G$ with $xg=gx$ and $g\neq e$.  Let $y\in G\setminus C_G(A)$ such that $yA= Ay$.  Then $y^{2}\notin A^{2}$. \end{prop}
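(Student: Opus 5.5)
The plan is to translate the hypothesis $yA=Ay$ into information about the conjugation map $\varphi(a)=y^{-1}ay$, and then show that $y^{2}\in A^{2}$ would force $x^{-1}y$ to be a nontrivial torsion element. That is impossible, because right-ordered groups are torsion-free.

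First, $yA=Ay$ gives $y^{-1}Ay=A$, so $\varphi$ restricts to a bijection of $A$. Since $x$ and $g$ commute, $H=\langle x,g\rangle$ is abelian. As a subgroup of a right-ordered group it is a torsion-free abelian group, hence orderable. The image $\varphi(H)=\langle \varphi(x),\varphi(g)\rangle$ is abelian as well, and
$$A=\varphi(A)=\{\varphi(x)\varphi(g)^{i}: 0\le i\le k-1\}.$$
So $A$ is presented as a $k$-term progression in two ways, once with ratio $g$ and once with ratio $\varphi(g)$.

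The key step, and the one needing the most care, is a rigidity claim: for $k\ge 3$, a $k$-term progression $\{x g^{i}\}$ with $g\neq e$ in an abelian subgroup determines its ratio up to inversion. I would argue inside the abelian group $\langle A\rangle$, where all the elements involved commute. There the quotients $a^{-1}b$ with $a,b\in A$ are exactly the elements $g^{j}$ with $|j|\le k-1$. The quotient sets of the two presentations coincide, so comparing the ``primitive'' consecutive quotients gives $\varphi(g)=g^{\pm1}$. Using $\langle A\rangle$ being ordered and torsion-free, this amounts to noting that the smallest positive quotient is $g^{\pm1}$ in either description. The endpoints then force $\varphi(x)=x$ when $\varphi(g)=g$, and $\varphi(x)=xg^{k-1}$ when $\varphi(g)=g^{-1}$. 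In the first case $y$ commutes with $x$ and $g$, hence with all of $A$, so $y\in C_G(A)$, which is excluded. Therefore
$$y^{-1}gy=g^{-1},\qquad y^{-1}xy=xg^{k-1},$$
and consequently $y^{-1}x^{-1}y=g^{-(k-1)}x^{-1}$.

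Now suppose $y^{2}\in A^{2}$, say $y^{2}=x^{2}g^{j}$ with $0\le j\le 2k-2$. Since $y^{2}$ commutes with $y$, it is fixed by $\varphi$. Applying $\varphi$ gives
$$\varphi(x^{2}g^{j})=x^{2}g^{2k-2}g^{-j}.$$
Setting this equal to $x^{2}g^{j}$ yields $g^{2(k-1-j)}=e$, and torsion-freeness gives $j=k-1$, so $y^{2}=x^{2}g^{k-1}$. From $y^{-1}x^{-1}y=g^{-(k-1)}x^{-1}$ we get $x^{-1}y=y\,g^{-(k-1)}x^{-1}$. Hence
$$(x^{-1}y)^{2}=x^{-1}y\cdot y\,g^{-(k-1)}x^{-1}=x^{-1}\,x^{2}g^{k-1}\,g^{-(k-1)}x^{-1}=e.$$
Since right-ordered groups are torsion-free, $y=x\in A$, contradicting $y\notin C_G(A)$. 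Thus $y^{2}\notin A^{2}$.
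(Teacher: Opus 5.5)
Your proof is correct, and it reaches the key relations $y^{-1}gy=g^{-1}$ and $y^{-1}xy=xg^{k-1}$ by a different mechanism from the paper's.

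\textbf{The paper's route.} It first splits according to where $y$ lies. If $y<\min(A)$ or $y>\max(A)$, it quotes the earlier estimates (Propositions~\ref{NMS-Pro-1} and \ref{NMS-Pro-2}, via Proposition~\ref{Coro-1}). These give $|yA\cup Ay|\ge |A|+1$ whenever $y^{2}\in A^{2}$, which is incompatible with $yA=Ay$. If $\min(A)<y<\max(A)$, it compares the sorted list $Ay$, which is increasing by right-invariance, with the list $yA$, which is monotone. The increasing case forces $y\in C_G(A)$. The reversed order gives $xg^{i}y=yxg^{k-1-i}$. After the same computation $t=k-1$ that you do, it finishes with an order inequality, $y^{2}=x^{2}g^{k-1}<yxg^{k-1}=xy<y^{2}$. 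That inequality uses $x<y$, so it depends on the case split.

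\textbf{Your route.} You replace the sorted-list comparison with a rigidity lemma for progressions inside the abelian group $\langle A\rangle$. This is legitimate because $\varphi(g)=\varphi(x)^{-1}\varphi(xg)\in\langle A\rangle$; it is worth saying so explicitly. You also replace the final inequality with the identity $(x^{-1}y)^{2}=e$, which handles every position of $y$ at once.

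\textbf{What each approach buys.} Your argument needs no case distinction and no imported estimates, and the ordering enters only through torsion-freeness. The ``smallest positive quotient'' and endpoint steps can also be done purely algebraically. For example, $\varphi(g)=g^{j}$ and $g=\varphi(g)^{j'}$ force $jj'=1$. So your argument in fact proves the statement in any torsion-free group. The paper's argument stays within the order-comparison technique used throughout the article, and its reversed-order step reappears in essentially the same form in the proof of Proposition~\ref{prop-2}.
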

   \begin{proof}   If $y>\max(A)$ or $y<\min(A)$, then by Proposition \ref{Coro-1}, we get that $y^{2}\notin A^{2}$. Let $\min(A)<y<\max(A)$. We also assume $g > e$; the case $g < e$ is similar. This implies $$x<xg<xg^{2}<\cdots<xg^{k-1}.$$  
    Therefore, the elements of the set $Ay  = \{xy,xgy,xgy^{2},\ldots,xg^{k-1}y\}$ can be arranged only in the following way:
	\begin{equation}\label{Seq-1}
		xy<xgy<xg^{2}y<\cdots<xg^{k-1}y.
	\end{equation} Now, if $yx<yxg$, then the elements of the set $yA=  \{yx,yxg,yxg^{2},\ldots,yxg^{k-1}\}$ can only be arranged in the following way:
	\begin{equation}\label{Seq-2}
		yx<yxg<yxg^{2}<\cdots<yxg^{k-1}.
	\end{equation}  Since $yA=Ay$, we obtain $xy=yx$ and $yxg=xgy$ by comparing \eqref{Seq-1} and \eqref{Seq-2}. This implies  $y\in C_{G}(A)$, which is a contradiction. Therefore, $yx>yxg$ and the elements of the set $yA$ can be arranged in the following way:
\begin{equation}\label{Seq-3}
	yx>yxg>yxg^{2}>\cdots>yxg^{k-1}.
\end{equation} Again, since $yA=Ay$, we obtain  \begin{equation}\label{thm-1.9-1.3}
		xg^{i}y=yxg^{k-1-i},
	\end{equation} where $i \in [\![0,k-1]\!]$, by comparing \eqref{Seq-1} and \eqref{Seq-3}.  This implies that \begin{equation}\label{thm-1.9-2.1.1}
		yg^{-1}=gy.
	\end{equation} 
 Now we show that $y^{2}\notin A^{2}$. Suppose $y^{2}\in A^{2}$. Then \begin{equation}\label{thm-1.9-3.1}
		y^{2}=x^{2}g^t \text{ for some } t\in [\![0,2k-4]\!].
	\end{equation} This, together with  \eqref{thm-1.9-2.1.1}, gives  $$yg^ty=x^{2}.$$  Using \eqref{thm-1.9-1.3}, we obtain $$yg^ty^{2}=x^{2}y=x(xy)=x(yxg^{k-1})=(xy)xg^{k-1}=yxg^{k-1}xg^{k-1}.$$ This gives that $y^{2}=x^{2}g^{2k-2-t}$, and by \eqref{thm-1.9-3.1}, we have $$x^{2}g^t=x^{2}g^{2k-2-t}.$$ This implies  $t=k-1$. Therefore, $y^{2} = x^{2}g^{k-1}<yxg^{k-1}=xy$, which is a contradiction because $xy<y^{2}$. Hence, $y^{2}\notin A^{2}$. 
\end{proof}
\begin{prop}\label{prop-3.8}
    Let $A$ be a nonempty finite abelian subset of a right-ordered group $G$ such that $|A^{2}|=2|A|$ with $|A|\geq 3$.  Let $y\in G\setminus C_G(A)$  such that $\min(A)<y$. Then  $|yA\cup Ay|\geq|A|+1$.  

\end{prop}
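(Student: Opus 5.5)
Since $|yA|=|Ay|=|A|$, it suffices to show that $yA\neq Ay$. So I would argue by contradiction: assume $yA=Ay$ and derive that $y\in C_G(A)$, or that some nontrivial power of $g$ is trivial. By Proposition \ref{2|A|}, write $n=|A|\geq 3$ and
$$A=\{xg^{i} : i\in I\},\qquad I=[\![0,n]\!]\setminus\{j\},\qquad j\in\{1,n-1\},$$
with $xg=gx$. Replacing $g$ by $g^{-1}$ and $x$ by $xg^{n}$ if necessary, assume $g>e$. Then
$$xg^{i}<xg^{i'}\quad\text{for } i<i'\text{ in } I,$$
so right multiplication by $y$ orders $Ay$ in the same way. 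Moreover, as in the proof of Proposition \ref{prop-1}, $yx<yxg$ forces $yxg^{i}<yxg^{i+1}$ for all $i$, by right-multiplying by powers of $g$. Hence the list $(yxg^{i})_{i\in I}$ is either strictly increasing or strictly decreasing in $i$. The hypothesis $\min(A)<y$ should not be needed for this argument. I would only invoke it, if at all, to dispose of degenerate configurations. Each case then comes from matching the two sorted lists $Ay$ and $yA$, as in Proposition \ref{prop-1}.

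\emph{Increasing case.} Matching gives $xg^{i}y=yxg^{i}$ for all $i\in I$. Since $n\geq 3$ and only one index is missing, $I$ contains two consecutive indices $i,i+1$. Then $y$ commutes with both $xg^{i}$ and $xg^{i+1}$, hence with $g$ and with $x$. So $y\in C_G(A)$, a contradiction.

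\emph{Decreasing case.} Matching gives $xg^{i}y=yxg^{\sigma(i)}$, where $\sigma$ is the unique order-reversing bijection of $I$. The key point is that $I$ is not symmetric under $i\mapsto n-i$, since $j\neq n-j$ when $n\geq 3$; so $\sigma$ is not the reflection. Take $j=n-1$; the case $j=1$ is symmetric.
\begin{itemize}
\item If $n\geq 4$, then $\sigma(0)=n$, $\sigma(1)=n-2$ and $\sigma(2)=n-3$. The equations for $i=1,2$ give $gy=yg^{-1}$. Using this on $xgy$ together with $xy=yxg^{n}$ gives $xgy=yxg^{n-1}$. This contradicts $xgy=yxg^{n-2}$ because $g\neq e$.
\item If $n=3$, then $I=\{0,1,3\}$ and $\sigma$ swaps $0$ and $3$ and fixes $1$. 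From $xy=yxg^{3}$ and $xgy=yxg$ one gets $gy=yg^{-2}$. Substituting this into $xg^{3}y=yx$ yields $g^{3}=e$. This contradicts the fact that right-ordered groups are torsion-free.
\end{itemize}

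The main obstacle I expect is this decreasing case: bookkeeping the order-reversing bijection of the index set with a gap, and checking the small case $n=3$ separately. Once the two orderings are matched, the computations are routine manipulations using $xg=gx$.
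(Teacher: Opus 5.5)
Your proof is correct and follows the paper's own route: assume $yA=Ay$, use Proposition~\ref{2|A|}, split according to whether $yx<yxg$ or $yx>yxg$, match the two sorted lists, and derive either $y\in C_G(A)$ or $g=e$; as in the paper, the hypothesis $\min(A)<y$ is never used. Your separate treatment of $|A|=3$, which yields $g^{3}=e$, improves on the paper: the paper's second relation in each case ($xg^{3}y=yxg^{k-2}$, respectively $xg^{2}y=yxg^{k-3}$) is only available when $|A|\geq 4$, so its written argument does not cover $|A|=3$.
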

\begin{proof}Let $|A|=k$. Since $|A^{2}|=2|A|$ and $A$ is abelian, it follows from Proposition \ref{2|A|} that   $$A=\{x,xg,xg^{2},\ldots, xg^{k-1}, xg^{k}\} \setminus \{z\},$$ where $z\in \{xg, xg^{k-1}\}$ for some $x,g\in G$ with $xg=gx$ and $g\neq e$.  Let $g>e$; the case $g<e$ is similar. Then $$x<xg<xg^{2}<\cdots<xg^{k-1}<xg^{k}.$$ To get a contradiction, assume that  $|yA\cup Ay|=|A|$. This implies $yA = Ay$. Now, consider the following cases. \par

\textbf{Case 1} $(z=xg)$. In this case, we have  $ Ay=\{xy,xg^{2}y, xg^{3}y,\ldots,  xg^{k-1}y,  xg^{k}y \},$ with \begin{equation}\label{prop-3.7-eq-2}
xy<xg^{2}y<xg^{3}y<\cdots<xg^{k-1}y<xg^{k}y.    
\end{equation} 
Now, if $yx<yxg$, then the elements of $yA = \{yx, yxg^{2}, yxg^{3},\ldots,yxg^{k-1}, yxg^{k}\}$ can be arranged in the following way: \begin{equation}\label{prop-3.7-eq-5}
    yx<yxg^{2}<yxg^{3}<\cdots<yxg^{k-1}< yxg^{k}.
\end{equation} 
 Since $yA= Ay$, we have $xy=yx$, $xg^{2}y=yxg^{2}$ and $xg^{3}y=yxg^{3}$ by  comparing \eqref{prop-3.7-eq-2} and \eqref{prop-3.7-eq-5}. This implies $y\in C_{G}(A)$, which is not possible. Therefore, $yx>yxg$ and the elements of $yA$ can be arranged in the following way:
 \begin{equation}\label{prop-3.7-eq-6}
yx>yxg^{2}>yxg^{3}>\cdots>yxg^{k-1}> yxg^{k}.
\end{equation}
Again, since $yA=Ay$, we have  	$$xg^{i}y=yxg^{k+1-i},$$ where $i \in [\![2,k-1]\!]$, $xy=yxg^{k}$ and $yx=xg^{k}y$ by comparing \eqref{prop-3.7-eq-2} and \eqref{prop-3.7-eq-6}. From $xy=yxg^{k}$
    and $xg^{2}y=yxg^{k-1}$, we obtain \begin{equation}\label{thm-1.9-2}
		g^{2}yg=y.
	\end{equation} From $xg^{2}y=yxg^{k-1}$ and $xg^{3}y=yxg^{k-2}$, we obtain  \begin{equation}\label{thm-1.9-3}
       gyg=y.
	\end{equation} By combining \eqref{thm-1.9-2} and \eqref{thm-1.9-3}, we get that $g=e$, which is not possible.\\

\textbf{Case 2} $(z=xg^{k-1})$. In this case, we have  $$ Ay=\{xy,xgy,xg^{2}y,\ldots,xg^{k-2}y, xg^{k}y \},$$ with \begin{equation}\label{prop-3.7-eq-1}
    xy<xgy<xg^{2}y<\cdots<xg^{k-2}y<xg^{k}y. 
\end{equation}
Now, if $yx<yxg$, then the elements of  $yA = \{yx, yxg,  yxg^{2},\ldots,yxg^{k-2}, yxg^{k}\}$  can be arranged in the following way: \begin{equation}\label{prop-3.7-eq-3}
    yx<yxg<yxg^{2}<\cdots<yxg^{k-2}< yxg^{k}.
\end{equation} 
 Since $yA= Ay$, we obtain $xy=yx$ and $xgy=yxg$ by comparing \eqref{prop-3.7-eq-1} and \eqref{prop-3.7-eq-3}. This implies $y\in C_{G}(A)$, which is not possible. Therefore, $yx>yxg$ and the elements of  $yA$  can be arranged in the following way: \begin{equation}\label{prop-3.7-eq-4}
 yx>yxg>yxg^{2}>\cdots>yxg^{k-2}> yxg^{k}.
\end{equation}Again, since $yA=Ay$, we obtain \begin{equation*}
		xg^{i}y=yxg^{k-1-i},
	\end{equation*} where $i \in [\![1,k-2]\!]$, $xy=yxg^{k}$ and $yx=xg^{k}y$ by comparing \eqref{prop-3.7-eq-1} and \eqref{prop-3.7-eq-4}. From $xy=yxg^{k}$
    and $xgy=yxg^{k-2}$, we obtain \begin{equation}\label{thm-1.9-2.1}
		gyg^{2}=y.
	\end{equation} From  $xgy=yxg^{k-2}$ and $xg^{2}y=yxg^{k-3}$, we obtain  \begin{equation}\label{thm-1.9-3.2}
		gyg=y.
	\end{equation} By combining \eqref{thm-1.9-2.1} and \eqref{thm-1.9-3.2}, we get that $g=e$, which is again not possible. Hence, $|yA\cup Ay|\geq |A|+1$.
\end{proof}
\begin{prop}\label{prop-2}
   Let $A$ be a nonempty finite abelian subset of a right-ordered group $G$ with $|A|\geq3$. Let $S=A\cup \{y\}$, where $y\in G\setminus A$ such that $y>\max(A)$ or $y<\min(A)$. If 
   \begin{equation}\label{Prop-2-eq-1}
       |S^{2}|=3k-3,
   \end{equation} where $k=|S|$, then one of the following holds:
   \begin{enumerate}
       \item [\upshape(i)]$\langle S\rangle$ is  an abelian subgroup of $G$,

       \item [\upshape(ii)]$S=\{x,xg,\ldots,xg^{k-2}, y\},$ with
     $[xg^{i},y]=g^{k-2-2i}$ for $i\in  [\![ 0,k-2 ]\!]$, where $x,g\in G$.  
   \end{enumerate}
\end{prop}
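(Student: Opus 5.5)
We work with the decomposition
\[
S^{2}=A^{2}\cup (yA\cup Ay)\cup\{y^{2}\},
\]
and write $|A|=k-1\geq 3$. We may assume $\langle S\rangle$ is nonabelian, since otherwise (i) holds. Because $A$ is abelian, $y\notin C_G(A)$. Since $y>\max(A)$ or $y<\min(A)$, the right-invariance of the order separates $A^{2}$ from the products involving $y$. For $y>\max(A)$, every $ay>a a'$ requires care (only right multiplication is monotone). So instead of order arguments we count through the propositions already proved.

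\textbf{Step 1: upper bound on $|A^{2}|$.} By Proposition \ref{Prop-3.1}, $|A^{2}|\geq 2|A|-1=2k-3$. We will show $|(yA\cup Ay)\cup\{y^2\}\setminus A^{2}|\geq k$ in the nonabelian case. Then $|A^{2}|\leq 3k-3-k=2k-3$, so equality holds throughout.

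\textbf{Step 2: the $k$ new elements.} First, $yA\cup Ay$ is disjoint from $A^{2}$. Take $y>\max(A)$ (the case $y<\min(A)$ is symmetric). Then $ay>aa'$ for all $a,a'\in A$, and $y a>\max(A)a\geq$ every element of $A a$. Combining these with $A$ abelian ($aa'=a'a$) gives $ya> a'a$ and $ay>aa'$, which covers all of $A^{2}$. Next, $y^{2}$ exceeds all of $yA$ by right-invariance, since $y>a$. Comparing $y^{2}$ with $ay$ is also immediate, $y\cdot y>a\cdot y$, so $y^{2}\notin yA\cup Ay$; likewise $y^{2}>\max(A)y\ge$ and $y^2> y a$. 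Hence $|S^{2}|\geq |A^{2}|+|yA\cup Ay|+1$.

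\textbf{Step 3: lower bound on $|yA\cup Ay|$.} If $|yA\cup Ay|\geq |A|+1=k$, then $|A^{2}|\leq 3k-3-k-1=2k-4<2|A|-1$, a contradiction. So $yA=Ay$ and $|A^{2}|\le 2k-3$, i.e.\ $|A^{2}|=2|A|-1$. Proposition \ref{Prop-3.1} (using $A$ abelian) then gives $A=\{x,xg,\dots,xg^{k-2}\}$ with $xg=gx$ and $g\neq e$.

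\textbf{Step 4: extracting the commutator relations.} Now $yA=Ay$ with $y\notin C_G(A)$. The ordering argument in the proof of Proposition \ref{prop-1}, with $|A|=k-1$ in place of $k$, rules out the order-preserving matching and forces $xg^{i}y=yxg^{k-2-i}$ for $i\in[\![0,k-2]\!]$. Using $xg=gx$, this gives
\[
[xg^{i},y]=(xg^{i})^{-1}y^{-1}(xg^{i})y=g^{-i}x^{-1}y^{-1}\,y\,xg^{k-2-i}=g^{k-2-2i},
\]
which is (ii). That ordering argument used $\min(A)<y<\max(A)$ only to invoke the chains; the chains \eqref{Seq-1}--\eqref{Seq-3} themselves only use right-invariance and $yA=Ay$, so they apply equally here.

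\textbf{Main obstacle.} The delicate point is Step 2: establishing disjointness rigorously in a merely right-ordered group, where left multiplication need not preserve order. Commutativity of $A$ is what rescues the comparison of $ya$ with $A^{2}$. If a collision such as $ya=a'a''$ cannot be excluded directly, the fallback is to compare against $\max(A^{2})$ using $ya>\max(A)\cdot a$ and $A$ abelian, and to handle any leftover single coincidence by a count of $3k-3$ with one element to spare.
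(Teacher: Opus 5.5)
There is a genuine gap, and it sits where the paper does its main work. The count $|S^{2}|\ge |A^{2}|+|yA\cup Ay|+1$ in Step 2 needs $y^{2}\notin A^{2}$, which you never prove. From $y>\max(A)$, right-invariance gives only inequalities such as $ay<y^{2}$ and $\max(A)^{2}<y\max(A)$. Comparing $y^{2}$ or $\max(A)y$ with products $aa'\in A^{2}$ would require left-invariance.

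Moreover, $y^{2}\in A^{2}$ really can happen. In $\mathbb{Z}\rtimes\mathbb{Z}$ with $(r,n)(s,m)=(r+(-1)^{n}s,\,n+m)$, ordered lexicographically by $n$ and then $r$ (a right order), take $A=\{(0,1),(0,2),(0,3)\}$ and $y=(1,3)$. Then $y>\max(A)$, $y\notin C_G(A)$, and $y^{2}=(0,6)=(0,3)^{2}\in A^{2}$.

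If $y^{2}\in A^{2}$, your Step 1 conclusion $|A^{2}|=2|A|-1$ can still be recovered from Proposition \ref{Coro-1}, which gives $|yA\cup Ay|\ge|A|+1$. Step 3, however, breaks. Counting then only yields $|yA\cup Ay|\in\{k-1,k\}$, and the configuration $y^{2}\in A^{2}$, $|yA\cup Ay|=k$ (so $yA\neq Ay$) is consistent with $|S^{2}|=3k-3$. Without $yA=Ay$, Step 4 has nothing to start from.

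The missing ingredient is the paper's Claim that $y^{2}\notin A^{2}$. Its key fact is that $y^{2}\in\langle A\rangle$, so $y^{2}$ commutes with $A$. For $y>\max(A)$ with $g>e$, one has the chain $xy<\cdots<xg^{k-2}y<y^{2}\le\max(A^{2})<yxg^{k-2}$. This forces $yA\cup Ay=Ay\cup\{yxg^{k-2}\}$, and then forces $yA$ to be increasing. A short case analysis follows ($yx\in\{xy,xgy\}$, $yxg\in\{xy,xgy,xg^{2}y\}$), and each case is killed using $y^{2}x=xy^{2}$. The case $y<\min(A)$ is handled the same way.

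Separately, your order-based proof that $(yA\cup Ay)\cap A^{2}=\emptyset$ is also invalid: $ay>aa'$ uses left-invariance, and $ya>a'a$ for all $a'$ covers only $Aa$, not $A^{2}$. This is easily fixed algebraically: $ya\in A^{2}$ or $ay\in A^{2}$ forces $y\in\langle A\rangle$, hence $y\in C_G(A)$. Likewise, $y^{2}\notin yA\cup Ay$ follows from cancellation and $y\notin A$, not from order. Step 4 is correct and matches the paper.
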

\begin{proof}Clearly,  \begin{equation}\label{Pro-1-Eq-1}
    S^{2}=A^{2}\cup yA\cup Ay \cup \{y^{2}\}.
    \end{equation}  If $y\in C_G(A)$, then $\langle S\rangle$ is abelian. Let $y\notin C_G(A)$. Then
    \begin{equation*}\label{Pro-1-Eq-2.1}
        (yA\cup Ay)\cap A^{2}=\emptyset.
    \end{equation*} 
Also, $y^{2}\notin yA \cup Ay$. Now,  if $y^{2}\in A^{2}$, then  by Proposition \ref{Coro-1} and Proposition \ref{Prop-3.1}, we have  
$$2|A|-1 \leq |A^{2}|\leq |S^{2}|-|yA\cup Ay|\leq 3k-3-k=2k-3=2|A|-1.$$  If $y^{2}\notin A^{2}$, then by Proposition \ref{Prop-3.1}, we have $$2|A|-1 \leq |A^{2}|\leq |S^{2}|-|yA\cup Ay \cup \{y^{2}\}| \leq 3k-3-k=2k-3=2|A|-1.$$ Thus, \begin{equation}\label{Pro-1-Eq-2.2}
    |A^{2}|=2|A|-1.
\end{equation} Therefore, by Proposition \ref{Prop-3.1}, we have \begin{equation*}\label{Pro-1-Eq-2.2.1}
    A=\{x,xg,xg^{2},\ldots,xg^{k-2}\},
\end{equation*} where $xg=gx$ and $g\neq e$. Let $g>e$; the case $g<e$ is similar. Then,  $$x<xg<xg^{2}<\ldots<xg^{k-2}.$$ 
    Further, from \eqref{Prop-2-eq-1}, \eqref{Pro-1-Eq-1}, and \eqref{Pro-1-Eq-2.2}, we have \begin{equation}\label{Pro-1-Eq-4.1}
        k-1 \leq |Ay\cup yA|\leq k.
    \end{equation}
    \textbf{Claim:} $y^{2}\notin A^{2}$.\par
   Suppose that $y^{2}\in A^{2}$. Then $\min(A^{2}) \leq y^{2} = x^{2}g^{t} \leq \max(A^{2})$ for some $t \in [\![0,2k-4]\!]$.  Since $xg=gx$, we have $y^{2}\in C_{G}(A)$. Now consider the following cases.
    
    \textbf{Case 1} ($y>\max(A)$).  In this case, \begin{equation}\label{prop-case-3-eq-1} xy < xgy < xg^{2}y <\cdots < xg^{k-2}y<y^{2}\leq \max(A^{2})=x^{2}g^{2k-4}<yxg^{k-2}. \end{equation} This, together with \eqref{Pro-1-Eq-4.1}, implies that $|Ay\cup yA| = k$ and \begin{equation*}\label{prop-case-3-eq-1.1.1}
        yA \cup Ay=\{xy,xgy,xg^{2}y,\ldots, xg^{k-2}y, yxg^{k-2}\}.
    \end{equation*}
 If $yx>yxg$, then   $$yx>yxg>\cdots>yxg^{k-2}>xg^{k-2}y>\cdots>xy.$$ This implies that  $|yA\cup Ay|>k$, which contradicts \eqref{Pro-1-Eq-4.1}. Therefore, we have \begin{equation}\label{prop-case-3-eq-2}
     yx<yxg < yxg^{2} < yxg^3 <\cdots < yxg^{k-2}.
 \end{equation} Further,  if $yx\geq xg^{2}y$, then $$xy<xgy<xg^{2}y \leq yx<yxg<\cdots<yxg^{k-2},$$ which again yields $|yA\cup Ay|> k$. Therefore $yx<xg^{2}y$, and similarly $yxg<xg^{3}y$.  Since $yx, yxg \in Ay \cup yA$, it follows from \eqref{prop-case-3-eq-1} that  $$yx\in \{xy, xgy\}  \text{ and } yxg \in \{xy,xgy,xg^{2}y\}.$$ Consider the following possible cases.
 \begin{enumerate}
     \item If $yx=xy$ and $yxg=xgy$, then  $y\in C_G(A)$, which is a contradiction.
     \item If  $yx=xy$ and $yxg=xg^{2}y$, then $$yxgy=xg^{2}y^{2}=y^{2}xg^{2},$$ because $y^{2} \in C_{G}(A)$. This gives $xgy = yxg^{2}$. Therefore, from \eqref{prop-case-3-eq-2}, we have $$xg^{2}y = yxg<yxg^{2}=xgy,$$ which contradicts \eqref{prop-case-3-eq-1}.
     \item If $yx=xgy$ and  $yxg=xg^{2}y$, then we have $yxy=xgy^{2}=y^{2}xg$ because $y^{2} \in C_{G}(A)$. This implies  $xy=yxg$. Consequently, $$xgy = yx <yxg=xy,$$ which contradicts \eqref{prop-case-3-eq-1}.
 \end{enumerate}

 \textbf{Case 2}($y<\min(A)$). Since $y^{2}\in A^{2}$, we have  
 \begin{equation}\label{prop-case 4-eq-1}
     yx<x^{2}=\min(A^{2})\leq y^{2}<xy< xgy < xg^{2}y <\cdots < xg^{k-2}y.
 \end{equation} Therefore, $$yA \cup Ay=\{yx,xy,xgy,xg^{2}y,\ldots, xg^{k-2}y\}.$$  
 If $yx>yxg$, then $$xg^{k-2}y>\cdots>xgy>xy>yx>yxg>yxg^{2}>\cdots> yxg^{k-2}.$$ This implies that $|yA\cup Ay|>k$, which is a contradiction to \eqref{Pro-1-Eq-4.1}. Therefore, $yx<yxg$.
 Using a similar argument, we obtain that $yxg \leq xgy$ and   $yxg^{2} \leq xg^{2}y$. Therefore  $yxg \in  \{xy, xgy\}$ and $yxg^{2} \in \{xy, xgy, xg^{2}y\}$. Consider the following possible cases. \begin{enumerate}
     \item  If $yxg=xy$, then $yxgy=xy^{2}$. Since $y^{2}\in A^{2}$, we have $yxgy=xy^{2}=y^{2}x$, which implies $xgy = yx$.  This gives $yx<yxg=xy<xgy=yx$, which is not possible.
     \item If $yxg = xgy$ and  $yxg^{2}= xg^{2}y$, then $yxg^{2} = xgyg$. This implies $xy=yx$, which is a contradiction to \eqref{prop-case 4-eq-1}.
 \end{enumerate}
  
Hence, $y^{2}\notin A^{2}$.  Consequently, we have $|yA \cup Ay|=k-1$ and $yA=Ay$.  The elements of $Ay= \{xy,xgy,xg^{2}y,\ldots,xg^{k-2}y\}$ can be arranged in the following way:
	 \begin{equation}\label{Pro-1-Eq-2}
  xy<xgy<xg^{2}y<\cdots<xg^{k-2}y.
    \end{equation} Now, if $yx<yxg$, then the elements of $yA=\{yx,yxg,yxg^{2},\ldots,yxg^{k-2}\}$ can be arranged in the following way:
	 \begin{equation}\label{Pro-1-Eq-3}
        yx<yxg<yxg^{2}<\cdots<yxg^{k-2}.
        \end{equation} 
Since $yA=Ay$, we obtain $xy=yx$ and $yxg=xgy$ by comparing \eqref{Pro-1-Eq-2} and \eqref{Pro-1-Eq-3}. This implies  $y\in C_{G}(A)$, which is a contradiction. Therefore, $yx>yxg$ and the elements of the set $yA$ can be arranged in the following way:
\begin{equation}\label{Pro-1-Eq-4}
yx>yxg>yxg^{2}>\cdots>yxg^{k-2}.
 \end{equation} 
Since $yA=Ay$, we obtain $xg^{i}y=yxg^{k-i-2}$, where  $i\in  [\![ 0,k-2 ]\!]$ by comparing \eqref{Pro-1-Eq-2} and \eqref{Pro-1-Eq-4}. Using $xy=yxg^{k-2}$ and $xgy=yxg^{k-3}$, we get that $yg^{-1}=gy$. Therefore,  $[xg^{i},y]=g^{k-2-2i}$ for $i\in  [\![ 0,k-2 ]\!]$.
\end{proof}
	

\begin{prop}\label{prop-3}
Let $A$ be a nonempty finite abelian subset of a right-ordered group $G$ with $|A|\geq3$. Let $S=A\cup \{y\}$, where $y\in G\setminus{A}$ such that $\min(A)<y<\max(A)$. If $|S^{2}|=3|S|-3$, then one of the following holds: 
\begin{enumerate}
    \item[\upshape(i)] $\langle S\rangle$ is an abelian subgroup of $G$,
    \item[\upshape(ii)] $k=4$ and there exists $x,g\in G$ such that  $S=\{x,xg,xg^{2}, y\}$ with $[x,y]=1$ and  $[y,g^{2}]=1$.
\end{enumerate}
\end{prop}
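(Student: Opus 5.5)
The plan is to mirror the counting argument of Proposition \ref{prop-2}, replacing Propositions \ref{Coro-1} and the claim $y^2\notin A^2$ with Propositions \ref{prop-1} and \ref{prop-3.8}, which are exactly the tools available when $\min(A)<y<\max(A)$. Write $k=|S|$, so $|A|=k-1\geq 3$. As before,
\[
S^{2}=A^{2}\cup yA\cup Ay\cup\{y^{2}\}.
\]
If $y\in C_G(A)$ then $\langle S\rangle$ is abelian and (i) holds, so assume $y\notin C_G(A)$. Then $(yA\cup Ay)\cap A^2=\emptyset$. Indeed, $ya=a_1a_2$ would force $y\in\langle A\rangle$, which is abelian, and hence $y\in C_G(A)$. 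Also $y^2\notin yA\cup Ay$, since $y\notin A$. Hence
\[
|A^2|+|yA\cup Ay|\leq 3k-3.
\]
Since $|yA\cup Ay|\geq|A|=k-1$, we get $|A^2|\leq 2k-2=2|A|$. Combined with Proposition \ref{Prop-3.1}, this gives $|A^2|\in\{2|A|-1,\,2|A|\}$.

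\textbf{Case $|A^2|=2|A|$.} Here the count forces $|yA\cup Ay|=|A|$. But Proposition \ref{prop-3.8} applies, since $A$ is abelian, $|A|\geq 3$, $y\notin C_G(A)$ and $\min(A)<y$. It gives $|yA\cup Ay|\geq|A|+1$, a contradiction. So this case is impossible.

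\textbf{Case $|A^2|=2|A|-1$.} By Proposition \ref{Prop-3.1}, and since $A$ is abelian, $A=\{x,xg,\ldots,xg^{k-2}\}$ with $xg=gx$; we may assume $g>e$. The count gives $k-1\leq|yA\cup Ay|\leq k$, and we split further.

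\emph{Subcase $|yA\cup Ay|=k-1$, i.e.\ $yA=Ay$.} Proposition \ref{prop-1} gives $y^2\notin A^2$, so the count is consistent. The order-matching argument from the proof of Proposition \ref{prop-1} yields $xg^iy=yxg^{k-2-i}$ for all $i$ and $yg^{-1}=gy$. We must now show that this is incompatible with $x<y<xg^{k-2}$. This is where the betweenness hypothesis has to be used. I would right-multiply the inequalities $x<y<xg^{k-2}$ by $y$ to get $xy<y^2<xg^{k-2}y=yx$. I would then use $y^2=y\cdot y$ together with $ygy^{-1}=g^{-1}$ to write $y^2$ as $y$ times something comparable to $x$ or $xg^{k-2}$. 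The aim is to place $y^2$ inside $yA$ or to derive an ordering contradiction, exactly as in the final step of Proposition \ref{prop-1}. This subcase is the step I expect to be the main obstacle, since the ordering information on $y$ is weaker than in Proposition \ref{prop-2}. If a direct contradiction fails, I would exploit the middle element $xg^{j}<y<xg^{j+1}$ and compare $xg^jy$, $y^2$ and $yxg^{j}$.

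\emph{Subcase $|yA\cup Ay|=k$.} Then $y^2\in A^2$, say $y^2=x^2g^t$, so $y^2\in C_G(A)$, and $|yA\cap Ay|=k-2$. The two chains $xy<xgy<\cdots<xg^{k-2}y$ and $yx,yxg,\ldots,yxg^{k-2}$ are monotone; the second is increasing or decreasing according as $yx<yxg$ or $yx>yxg$. So they overlap in $k-2$ terms only if one is a shift of the other by one position, either in the same or in the reversed direction.

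In the same-direction case, $yxg^{i}=xg^{i\pm1}y$ for all $i$ in a range of length $k-2$. Taking two consecutive indices gives $gy=yg$. Then $y^2\in C_G(A)$ together with $yx=xg^{\pm1}y$ gives $xy=yxg^{\pm1}$, contradicting the assumed order, as in cases (2) and (3) of Proposition \ref{prop-2}.

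In the reversed case, $xg^iy=yxg^{c-i}$ for $k-2$ consecutive values of $i$, where $c\in\{k-1,k-3\}$. For $k\geq 5$ this yields $yg^{-1}=gy$ and, via $y^2=x^2g^t$, the same contradiction as in Proposition \ref{prop-1}. For $k=4$ only two matching equations are available, and they should reduce precisely to $xy=yx$ and $y$ commuting with $g^2$. This is alternative (ii), with $S=\{x,xg,xg^2,y\}$. I expect some bookkeeping here to rule out the other placements of $y^2$ in $A^2$ for $k=4$.
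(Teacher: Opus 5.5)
Your reduction to $|A^2|=2|A|-1$ is the paper's, but after that there are two genuine gaps, and they are linked. First, you leave the orientation step open, and you flag it yourself as the main obstacle. Right-multiplying by $y$, as you suggest, does not lead anywhere obvious. The missing idea is to right-multiply by elements of $A$ instead. From $y<xg^{k-2}$ and $x<y$ you get $yx<xg^{k-2}x=x\cdot xg^{k-2}<yxg^{k-2}$, using that $A$ is abelian. If $yx>yxg$, right multiplication by powers of $g$ would give $yx>yxg>\cdots>yxg^{k-2}$. So the chain $yA$ is increasing, like $Ay$, and the reversed orientation never occurs in either subcase. The subcase $yA=Ay$ then dies at once: matching the two increasing chains gives $xy=yx$ and $xgy=yxg$, so $y\in C_G(A)$. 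Proposition \ref{prop-1} is not needed.

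Second, your treatment of the subcase $|yA\cup Ay|=k$ rests on a false combinatorial claim. Two increasing chains of length $k-1$ whose union has $k$ elements need not be one-step shifts of each other. Write the union as $u_0<\cdots<u_{k-1}$ and suppose the chains omit $u_p$ and $u_q$. Then they agree termwise at indices below $\min(p,q)$ and from $\max(p,q)$ on, and are shifted only in between. Alternative (ii) is exactly such a mixed configuration, with $\{p,q\}=\{1,2\}$ and $k=4$. There $yx=xy$ and $yxg^2=xg^2y$ match termwise, while $yxg$ and $xgy$ are the unmatched elements. So your same-direction argument would ``refute'' precisely the configuration described in (ii). Meanwhile you try to extract (ii) from the reversed case, which is impossible.

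A clean repair uses $y^2\in A^2\subseteq\langle A\rangle$. Suppose $yxg^i=xg^jy$. Right-multiplying by $y$ and moving $y^2$ past $xg^j$ gives $xg^iy=yxg^j$. If $i\neq j$, this contradicts the fact that both chains increase. So each of the $k-2$ common elements of $yA$ and $Ay$ yields an index $i$ with $[xg^i,y]=1$. By Lemma \ref{Lemma 4.1}, no two such indices are consecutive in $[\![0,k-2]\!]$. This forces $k=4$ with indices $\{0,2\}$, i.e.\ $[x,y]=1$ and $[y,g^2]=1$. The paper reaches the same conclusion through a longer case analysis, first proving $xy=yx$.
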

\begin{proof}
Let $|S|=k$. If $y\in C_G(A)$, then $\langle S\rangle$ is an abelian subgroup. Let  $y\notin C_G(A)$. Then 
\begin{equation}\label{Prop-3.9-Eq-1}
    (yA\cup Ay)\cap A^{2}=\emptyset \ \text{and} \ y^{2} \notin yA \cup Ay.
\end{equation} 
Clearly, 
\begin{equation}\label{Prop-3.9-Eq-2}
    S^{2}=A^{2}\cup(yA\cup Ay)\cup \{y^{2}\}.
\end{equation} 
This, together with \eqref{Prop-3.9-Eq-1} and Proposition \ref{Prop-3.1}, gives $$2|A|-1 \leq |A^{2}|\leq |S^{2}|-|Ay\cup yA|=3k-3-(k-1)=2k-2=2|A|.$$  By Proposition  \ref{prop-3.8},  if $|A^{2}|=2|A|$, then  $|Ay \cup yA| \geq |A|+1$, and hence $$|S^{2}|\geq |A^{2}|+|Ay \cup yA| \geq  2(k-1)+k-1+1=3k-2,$$ which exceeds the assumed value of $|S^
{2}|$. Therefore, \begin{equation}\label{Prop-3.9-Eq-3}
    |A^{2}|=  2|A|-1.
\end{equation} Thus, by Proposition \ref{Prop-3.1}, we have $$A=\{x,xg,xg^{2},\ldots,xg^{k-2}\},$$ where $xg=gx$ and $g\neq e$. We assume $g>e$; the case  $g<e$ is similar. Therefore, we have
\begin{equation}\label{Prop-3.9-Eq-4}
xy<xgy<xg^{2}y<\cdots<xg^{k-2}y.
\end{equation} Since $x<y<xg^{k-2}$, we have $$yx<xg^{k-2}x=x^{2}g^{k-2}<yxg^{k-2}.$$
If $yx>yxg$, then  $yx<yxg^{k-2}<yxg^{k-3}<\cdots<yxg<yx$, which is not possible. Therefore,  $yx<yxg$ and  \begin{equation}\label{Prop-3.9-Eq-5}
    yx<yxg<\cdots<yxg^{k-2}.
\end{equation}  If  $|yA\cup Ay|=k-1$, then  $Ay=yA$, and it follows from \eqref{Prop-3.9-Eq-4} and \eqref{Prop-3.9-Eq-5} that  $xy=yx$ and $yxg=xgy$. This implies  $y\in C_{G}(A)$, which is a contradiction.  Therefore $|yA\cup Ay| \geq k$. Further, from \eqref{Prop-3.9-Eq-1}, \eqref{Prop-3.9-Eq-2}, and \eqref{Prop-3.9-Eq-3}, we get that $$ |Ay\cup yA| = k \ \text{and} \ y^{2} \in A^{2}.$$ Observe that if $yx<xy$, then using arguments similar to those in case 2 of Proposition \ref{prop-2} results in a contradiction. Therefore $xy\leq yx$. Now, we show that $xy=yx$. Suppose $xy<yx$. Since $yx<yxg$, we have
 $$yA\cup Ay=\{xy,yx,\ldots, yxg^{k-2}\},$$ with  \begin{equation}\label{Proposition-1-Eq-5}
      xy<yx<yxg<\cdots<yxg^{k-2}.
  \end{equation}  Moreover, the elements of $Ay$ can only be arranged as \begin{equation}\label{Proposition-1-Eq-6}
      xy<xgy<xg^{2}y<\cdots<xg^{k-2}y.
  \end{equation} Since $yA \subseteq yA \cup Ay$, on comparing the elements of \eqref{Proposition-1-Eq-5} and \eqref{Proposition-1-Eq-6}, we get that $xgy \leq yxg$ and  $xg^{2}y \leq yxg^{2}$. Therefore, either $xgy=yx$ or $xgy=yxg$. Consider the following cases.
  \begin{enumerate}
  
      \item 
      If $xgy = yxg$, then $xg^{2}y=yxg^{2}$. This implies $xy=yx$, which is a contradiction. 
      \item If  $xgy = yx$, then $yxgy=y^{2}x$. Since $y^{2}\in A^{2}$, we have $yxgy=xy^{2}$. This gives $yxg=xy<yx$, which contradicts \eqref{Proposition-1-Eq-5}.
  \end{enumerate}
  
 Therefore, $xy=yx$.  Clearly, $xgy\neq yxg$, otherwise $y\in C_G(A)$. Now, consider the following cases.

 \textbf{Case 1} $(yxg<xgy)$. In this case, we have  $$ yA\cup Ay= \{yx, yxg, xgy, \ldots, xg^{k-2}y\},$$ with 
   \begin{equation}\label{Proposition-1-Eq-7}
 xy=yx<yxg<xgy<xg^{2}y<\cdots<xg^{k-2}y.      
   \end{equation}
Also, the elements of $yA$ can be arranged in the following way:
\begin{equation}\label{Proposition-1-Eq-8}
     yx<yxg<yxg^{2}<\cdots<yxg^{k-2}.
 \end{equation}
Since $yA \subseteq yA \cup Ay$, on comparing the elements of \eqref{Proposition-1-Eq-7} and \eqref{Proposition-1-Eq-8}, we get that $yxg^{2}\in \{xgy,xg^{2}y\}$. If $yxg^{2}=xgy$, then $yxg^{2}y=xgy^{2}$. Since $y^{2}\in A^{2}$, we have  $yxg^{2}y=y^{2}xg$. This gives  $xg^{2}y=yxg$, which contradicts $yxg<xgy$. Therefore, $yxg^{2}=xg^{2}y$, and this gives $yg^{2}=g^{2}y$. That is, $[y,g^{2}]=1$. Now, we show that $k=4$. Suppose $k>4$. Then $yxg^3\in yA\cup Ay$ and $yxg^3=xg^3y$. This gives $yg^3=xg^3$, together with $yg^{2}=xg^{2}$, we obtain that $yg=gy$.  This implies $y\in C_G(A)$, which is a contradiction. Therefore $k=4$.\\

\textbf{Case 2} $(xgy<yxg)$.  Then 
   $$ yA\cup Ay= \{xy, xgy,yxg,yxg^{2}, \ldots,yxg^{k-2}\},$$ with \begin{equation}\label{xgy<yxg}
       yx=xy<xgy<yxg<yxg^{2}<\cdots<yxg^{k-2}.
   \end{equation}
   The elements of $Ay$ can be arranged in the following way:
\begin{equation}\label{xgy<yxg-1}
xy<xgy<xg^{2}y<xg^{3}y<\cdots<xg^{k-2}y.
 \end{equation}
Since $Ay \subseteq yA \cup Ay$, on comparing the elements of \eqref{xgy<yxg} and \eqref{xgy<yxg-1}, we get that $xg^{2}y \in \{yxg, yxg^{2}\}$. If $xg^{2}y=yxg$, then $yxg^{2}y=y^{2}xg$. Since $y^{2}\in A^{2}$, we have $yxg^{2}y=xgy^{2}$. This gives  $yxg^{2}=xgy$, which is not possible because $xgy<yxg<yxg^{2}$. Therefore $xg^{2}y=yxg^{2}$, this gives $g^{2}y=yg^{2}$. That is, $[y,g^{2}]=1$. Now, we show that $k=4$. Suppose $k>4$. Then $yxg^3\in yA\cup Ay$ and $yxg^3=xg^3y$. This gives $yg^3=xg^3$, together with $yg^{2}=xg^{2}$, we get that $yg=gy$.  This implies $y\in C_G(A)$, which is a contradiction. Thus, $k=4$. This completes the proof.
\end{proof}
\begin{proof}[Proof of Theorem \ref{Theorem-1}]
   It directly follows from Proposition \ref{prop-2} and Proposition \ref{prop-3}.
\end{proof}
\section{proof of theorem \ref{Theorem-2}}\label{section-4}
To prove Theorem \ref{Theorem-2}, the following auxiliary lemmas and propositions are required.
 \begin{lemma}\label{Lemma 4.1}
    Let $A=\{x,xg,xg^{2},\ldots,xg^{k-1}\}$ be a subset of $k\geq 2$ elements of a group $G$.  Let $y\in G$ be such that $y\in C_{G}(\{xg^{i}, xg^{i+1}\})$ for some $i\in [\![0,k-2]\!]$. Then $y\in C_G(A)$.
\end{lemma}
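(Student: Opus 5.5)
Since $y$ commutes with $xg^{i}$ and with $xg^{i+1}$, it commutes with the quotient $(xg^{i})^{-1}(xg^{i+1}) = g^{-i}x^{-1}xg^{i+1} = g$. So the plan is to first extract $y\in C_G(\{g\})$, and only then recover $y\in C_G(\{x\})$.

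For the second step, note that $y$ commutes with $g^{i}$, since powers of an element commuting with $y$ also commute with $y$. Combined with $y$ commuting with $xg^{i}$, this gives that $y$ commutes with $x = (xg^{i})g^{-i}$. Hence $y$ lies in $C_G(\{x,g\})$, which is a subgroup of $G$. Every element $xg^{j}$ of $A$ is a product of $x$ and powers of $g$, so $y$ commutes with each of them, and therefore $y\in C_G(A)$.

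No ordering hypothesis and no commutativity of $x$ with $g$ are needed. The only point requiring care is to form the quotient as $(xg^{i})^{-1}(xg^{i+1})$, with the inverse on the left, so that the $x$'s cancel without assuming $xg=gx$. There is no real obstacle here; the lemma is a routine centralizer computation.
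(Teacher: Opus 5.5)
Your proof is correct and is the paper's argument: the paper also cancels $xg^{i}$ on the left, via $xg^{i+1}y = yxg^{i+1} = xg^{i}yg$, to get $gy=yg$, and then uses $yxg^{i}=xg^{i}y=xyg^{i}$ to get $yx=xy$. One cosmetic point: the subgroup you actually need is $C_G(\{y\})$, which contains $x$ and $g$ and hence every $xg^{j}$; the fact that $C_G(\{x,g\})$ is a subgroup plays no role in the conclusion.
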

\begin{proof}
Since  $y\in C_{G}(\{xg^{i}, xg^{i+1}\})$, we have $$xg^{i+1}y = yxg^{i+1} =xg^{i}yg.$$ This gives $gy=yg$. Therefore, $$yxg^{i} = xg^{i}y = xyg^{i}.$$ This implies $yx=xy$. 
\end{proof}
    \begin{prop}\label{section-4-prop-4.1}
         Let $A$ and  $B$ be two nonempty finite abelian subsets of a right-ordered group $G$ such that the following holds:
     \begin{enumerate}
         \item[\upshape(1)]   $\max(A) < \min(B)$,
         \item[\upshape(2)]$\max(A^{2})= \min(B^{2})$,
         \item[\upshape(3)] $\min(B) \notin C_{G}(A)$, 
         \item [\upshape(4)] $|A|\geq 2$, $|B|\geq 2$, and $|A\cup B| \geq 5$.
     \end{enumerate} Then $\left|(A\cup B)^{2}\right| \geq  3|A\cup B|-2$.
\end{prop}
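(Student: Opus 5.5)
The plan is to split $(A\cup B)^{2}$ into the "pure" part $A^{2}\cup B^{2}$ and the "mixed" part, and count each. Put $m=|A|$, $n=|B|$, $a^{*}=\max(A)$ and $b_{0}=\min(B)$.

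\textbf{Step 1: the pure part.} Since $A$ and $B$ are abelian, for $b,b'\in B$ we have $b_{0}b'\le bb'$ by right invariance and $b_{0}b'=b'b_{0}\ge b_{0}^{2}$. Hence $\min(B^{2})=b_{0}^{2}$, and likewise $\max(A^{2})=a^{*2}$. Hypothesis (2) therefore reads $a^{*2}=b_{0}^{2}$. Every element of $A^{2}$ is $\le a^{*2}$ and every element of $B^{2}$ is $\ge b_{0}^{2}$, so the two sets meet exactly in this one element. Proposition \ref{Prop-3.1} then gives
\[
|A^{2}\cup B^{2}|=|A^{2}|+|B^{2}|-1\ge 2m+2n-3 .
\]
It therefore suffices to find at least $m+n+1$ elements of $AB\cup BA$ lying outside $A^{2}\cup B^{2}$.

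\textbf{Step 2: the mixed part.} I would use two families.
\begin{itemize}
\item The first family is $b_{0}A\cup Ab_{0}$. Here $b_{0}>\max(A)$ and $b_{0}^{2}=\max(A^{2})$, so Proposition \ref{NMS-Pro-1} gives $|b_{0}A\cup Ab_{0}|\ge m+1$.
\item The second family is $a^{*}B\cup Ba^{*}$. Here $a^{*}<\min(B)$ and $a^{*2}=\min(B^{2})$, so Proposition \ref{NMS-Pro-2} gives $|a^{*}B\cup Ba^{*}|\ge n+1$.
\end{itemize}
The two families share the elements $a^{*}b_{0}$ and $b_{0}a^{*}$, so the naive total is about $m+n$ or $m+n+1$, which is at the edge of what we need. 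To gain the missing element I would use hypothesis (3). Since $b_{0}\notin C_{G}(A)$, Lemma \ref{Lemma 4.1} shows that $b_{0}$ commutes with at most one element of $A$ (two would force $b_{0}\in C_G(A)$ when $A$ is a progression). This is the source of the extra element, in the same spirit as the case analyses of Proposition \ref{prop-2}.

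\textbf{Step 3: the mixed products avoid the pure part.} I would show that an element $ab_{0}$ or $b_{0}a$ lying in $A^{2}$ (or $a^{*}b$, $ba^{*}$ lying in $B^{2}$) forces a commutation relation. Right invariance places $Ab_{0}$ strictly below $b_{0}^{2}$ and $Ba^{*}$ strictly above $a^{*2}$. If moreover $ab_{0}=a_{1}a_{2}$, then multiplying by $b_{0}$ and using $b_{0}^{2}=a^{*2}\in C_G(A)$ gives relations of the form $b_{0}a_{1}a_{2}=\dots$, exactly as in Case 1 and Case 2 of the Claim in Proposition \ref{prop-2}. Those relations either contradict the orderings or produce $b_{0}\in C_{G}(\{a,a'\})$ for two elements $a,a'\in A$. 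In the tight case $|A^{2}|=2m-1$, Proposition \ref{Prop-3.1} makes $A$ a progression, and Lemma \ref{Lemma 4.1} then contradicts (3). When $|A^{2}|\ge 2m$ there is already one extra element of slack in Step 1.

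\textbf{Main obstacle.} I expect the hard part to be this overlap bookkeeping: showing that at most one coincidence occurs among the four sets $b_{0}A$, $Ab_{0}$, $a^{*}B$, $Ba^{*}$ and $A^{2}\cup B^{2}$ beyond the forced common element. The hypothesis $|A\cup B|\ge 5$ should be used exactly here. With $m,n\ge2$ and $m+n\ge5$, one of $A,B$ has at least three elements, which supplies the second comparison needed to run the argument of Proposition \ref{prop-1}: ordering the chains increasingly or decreasingly and matching them term by term. For very small sets that argument fails, and that is why the bound breaks down there. My first attempt would be to treat separately the cases $b_{0}a^{*}=a^{*}b_{0}$ and $b_{0}a^{*}\ne a^{*}b_{0}$. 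In the second case both $a^{*}b_{0}$ and $b_{0}a^{*}$ are new, which directly gives the count $(2m+2n-3)+(m+n+1)=3(m+n)-2$.
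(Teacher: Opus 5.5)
There is a genuine gap. Your Step 1 is correct and is exactly the paper's, but the proposal never produces the extra mixed product, and the closing count is wrong. Since $a^{*}<b_{0}$, right invariance gives
\[
a^{*}b_{0}<b_{0}^{2}=a^{*2}<b_{0}a^{*}.
\]
So $a^{*}b_{0}\neq b_{0}a^{*}$ always. In fact (3) follows from (1) and (2), and your case $b_{0}a^{*}=a^{*}b_{0}$ is empty. In the only possible case, both $b_{0}A\cup Ab_{0}$ and $a^{*}B\cup Ba^{*}$ contain both of these elements. The lower bounds $m+1$ and $n+1$ from Propositions \ref{NMS-Pro-1} and \ref{NMS-Pro-2} therefore combine to only $(m+1)+(n+1)-2=m+n$, one short of the $m+n+1$ you need. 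Further coincidences between the two families (e.g.\ $b_{0}a=a^{*}b$) are not controlled either, and the two elements being ``new'' buys nothing.

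The mechanism you name for the missing element is also misquoted. Lemma \ref{Lemma 4.1} only forbids $b_{0}$ commuting with two \emph{consecutive} terms $xg^{j},xg^{j+1}$ of a progression. Roots are not unique in right-ordered groups, so $b_{0}$ can commute with $x$ and $xg^{2}$ but not with $xg$; this is exactly configuration (ii) of Theorem \ref{Theorem-1}. What must be shown is that the forced commutations occur at adjacent positions, and nothing in your Steps 2--3 does that.

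The paper gets the extra element by an ordering argument in the tight case. It uses only $Ab_{0}$, $b_{0}A$ and $Ba^{*}$, whose overlaps are easy to control: $Ab_{0}<a^{*2}<Ba^{*}$ by order, and $b_{0}A\cap Ba^{*}=\{b_{0}a^{*}\}$.

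\begin{itemize}
\item Assuming $|(A\cup B)^{2}|\le 3k-3$, the bound $|A^{2}\cup B^{2}|+|Ab_{0}|+|Ba^{*}|\ge 3k-3$ becomes an equality.
\item This forces $|A^{2}|=2m-1$, so $A=\{x_{1}<\dots<x_{m}\}$ is a progression $x_{j}=xg^{j-1}$ by Proposition \ref{Prop-3.1}. It also forces $b_{0}A\setminus\{b_{0}a^{*}\}\subseteq Ab_{0}$, i.e.\ $|Ab_{0}\cup b_{0}A|=m+1$.
\item The chain $b_{0}x_{1},\dots,b_{0}x_{m}$ is monotone. If it decreased, all its terms would lie above $a^{*2}>\max(Ab_{0})$, which is impossible. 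So it increases.
\item Comparing with $x_{3}b_{0}$ gives $b_{0}x_{1}<b_{0}x_{2}<x_{3}b_{0}$. The equality $b_{0}x_{2}=x_{3}b_{0}$ is excluded because $b_{0}^{2}=a^{*2}$ commutes with $A$.
\item Hence $b_{0}x_{1}=x_{1}b_{0}$ and $b_{0}x_{2}=x_{2}b_{0}$. This is a consecutive pair, so Lemma \ref{Lemma 4.1} contradicts (3).
\end{itemize}

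This tight-case argument is the missing idea. It needs $m\ge 3$ (it uses $x_{3}\in A$ and $x_{2}\neq a^{*}$). The paper writes out only this case, which suffices for Theorem \ref{Theorem-2}. When $m=2$, hence $n\ge 3$, one runs the mirror argument with $a^{*}B\cup Ba^{*}$ and $Ab_{0}$, using $a^{*}b_{0}\neq b_{0}a^{*}$. That is where your remark that one of $A,B$ has at least three elements genuinely enters.
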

\begin{proof} To get a contradiction, we assume $\left|(A\cup B)^{2}\right| \leq  3|A\cup B|-3$.
  Let $A=\{x_{1}, \ldots, x_{i}\}$ and $B = \{x_{i+1},\ldots,x_{k}\} $ such that $\langle A \rangle$ and $\langle B \rangle$ are  abelian subgroups of $G$ with
  \begin{equation}\label{Prop-4.1 Eq-01}
      x_{1}  < \cdots <x_{i}<x_{i+1} < \cdots < x_{k}.
  \end{equation} Since $x_{i+1}\notin C_G(A)$, we have $x_{i+1}\notin \langle A\rangle.$ 
   It follows that \begin{equation}\label{Sec-3-Eq-2..1}
      A^{2}\cap (x_{i+1}A\cup Ax_{i+1})=\emptyset.
  \end{equation}
Now, suppose that  $x_{i+1}x_{a}\in B^2$ for some $x_{a} \in A$. Then there exist  $x_{b_1},x_{b_2}\in B$ such that $$x_{i+1}x_{a}=x_{b_1}x_{b_2}.$$ This implies that $x_{a}=x_{i+1}^{-1}x_{b_1}x_{b_2}=x_{b_1}x_{i+1}^{-1}x_{b_2} \geq x_{b_2}$, which contradicts \eqref{Prop-4.1 Eq-01}. Therefore, \begin{equation}\label{Prop-4.1-Eq-01A}x_{i+1}A\cap B^2 = \emptyset.
\end{equation}  
Suppose that $x_{i+1}x_{a} = x_{b}x_{i}$ for some $x_{a}\in A$ and $x_{b}\in B$. This implies that $$x_{i}\geq x_{a} = x_{i+1}^{-1}x_{b}x_{i} = x_{b}x_{i+1}^{-1}x_{i} \geq x_{i}.$$ This gives $x_{a}=x_{i}$. Thus, 
\begin{equation}\label{Prop-4.1-Eq-0001}
    x_{i+1}A \cap Bx_{i} = \{x_{i+1}x_{i}\}.
\end{equation} Since $\max(A^{2}) = \min(B^{2})$ and $A$ and $B$ are abelian sets, we have  $A^{2}\cap B^{2}=\{x_{i}^2\}$. Therefore, it follows from Proposition \ref{Prop-3.1} that 
\begin{equation}\label{Sec-3-Eq-12}
			|A^{2}\cup B^{2}|\geq |A^{2}| + |B^{2} |-1\geq 2i-1+2(k-i)-1-1=2k-3.
\end{equation}
		Further, note that  \begin{equation}\label{Sec-3-Eq-12.1}
		    \max (Ax_{i+1})=x_{i}x_{i+1}
		<x_{i+1}^2 = \max(A^2)=\min(B^2)=x_{i}^2<x_{i+1}x_{i} = \min(Bx_{i}).\end{equation} This implies \begin{equation}\label{Sec-3-Eq-13}
			(A^{2}\cup Ax_{i+1})\cap Bx_{i}=\emptyset
		\end{equation} 
		and 
		\begin{equation}\label{Sec-3-Eq-14}
			(B^{2}\cap Ax_{i+1})=\emptyset.
		\end{equation}
        From \eqref{Prop-4.1-Eq-0001} and \eqref{Sec-3-Eq-13}, we have $Ax_{i+1} \neq x_{i+1}A$, which implies \begin{equation}\label{Prop-4.1-Eq-000101}
            |Ax_{i+1} \cup x_{i+1}A| \geq |A| +1.
        \end{equation}
		Also, from \eqref{Sec-3-Eq-12.1}, we have $x_{i}x_{i+1} \neq x_{i+1}x_{i}$ and $x_{i}\notin \langle B \rangle$. This gives   \begin{equation}\label{Sec-3-Eq-15}
			B^{2} \cap Bx_{i} = \emptyset.
		\end{equation}
		By combining  \eqref{Sec-3-Eq-2..1}, \eqref{Sec-3-Eq-12}, \eqref{Sec-3-Eq-13}, \eqref{Sec-3-Eq-14}, and \eqref{Sec-3-Eq-15}, we get that 
		\begin{align*}
			3k-3\geq|(A\cup B)^2| & \geq |A^{2}\cup B^{2}\cup Ax_{i+1}\cup Bx_{i}|\\ & =|A^{2}\cup B^{2}|+|Ax_{i+1}|+|Bx_{i}| \\ &\geq 2k-3+i+k-i=3k-3.
		\end{align*} Thus, $$(A\cup B)^{2}=A^{2}\cup B^{2}\cup Ax_{i+1}\cup Bx_{i}$$ and $$|A^{2}|=2i-1.$$ Therefore, by Proposition \ref{Prop-3.1}, we have $$A=\{x,xg,xg^2,\ldots, xg^{i-1}\},$$ with $xg=gx$ and $g\neq e$ for some $x,g\in G$. Without loss of generality, we assume $g>e$. This implies  $$x_{j} = xg^{j-1} \ \text{for} \ j\in [\![0,i]\!].$$ Further, \eqref{Sec-3-Eq-2..1}, \eqref{Prop-4.1-Eq-01A}, \eqref{Prop-4.1-Eq-0001}, \eqref{Sec-3-Eq-13}, \eqref{Sec-3-Eq-14}, and \eqref{Sec-3-Eq-15} imply that   $$|A^{2}\cup B^{2}\cup Ax_{i+1}\cup Bx_{i}|=|(A \cup B)^{2}|\geq |A^{2}\cup B^{2}|+|Ax_{i+1} \cup x_{i+1}A|+|Bx_{i}|-1.$$ This gives  
        \begin{equation*}
            |Ax_{i+1}\cup x_{i+1}A|\leq |A|+1,
        \end{equation*}
       and  it follows from \eqref{Prop-4.1-Eq-000101} that
        \begin{equation}\label{Prop 4.1A Eq-1}
            |Ax_{i+1}\cup x_{i+1}A|=|A|+1.
        \end{equation} Therefore, by \eqref{Sec-3-Eq-12.1}, we have \begin{equation}\label{Prop 4.1A Eq-1.1}
            Ax_{i+1}\cup x_{i+1}A= Ax_{i+1}\cup \{x_{i+1}x_{i}\}.
        \end{equation}
        Clearly, $x_{i+1}x_{2}\neq x_{i+1}x_{1}$. If $x_{i+1}x_{2}<x_{i+1}x_{1}$, then we have  $$x_{1}x_{i+1}<x_{2}x_{i+1}<\cdots<x_{i}x_{i+1}<x_{i+1}^{2}=x_{i}^{2}<x_{i+1}x_{i}<x_{i+1}x_{i-1}<\cdots<x_{i+1}x_{2}<x_{i+1}x_{1}.$$
    This implies that  $|Ax_{i+1}\cup x_{i+1}A| > |A|+1,$ which contradicts \eqref{Prop 4.1A Eq-1}. Therefore, $x_{i+1}x_{1}<x_{i+1}x_{2}$, which implies
    \begin{equation*}\label{Prop-4.1 Eq-11}
       x_{i+1}x_{1}<x_{i+1}x_{2}<\cdots<x_{i+1}x_{i}.
    \end{equation*} 
   Further, if $x_{i+1}x_{1}\geq x_{3}x_{i+1}$, then $$x_{1}x_{i+1}<x_{2}x_{i+1}<x_{i+1}x_{1}<x_{i+1}x_{2}<\cdots<x_{i+1}x_{i},$$ and this gives $|Ax_{i+1}\cup x_{i+1}A| > |A|+1,$ which contradicts \eqref{Prop 4.1A Eq-1}. Therefore, 
   \begin{equation}\label{Prop-4.1 Eq-12}
       x_{i+1}x_{1}< x_{3}x_{i+1}.
   \end{equation}
   Similarly, if $x_{i+1}x_{2}>x_{3}x_{i+1}$, then we have $$x_{1}x_{i+1}<x_{2}x_{i+1}<x_{3}x_{i+1}<x_{i+1}x_{2}<\cdots<x_{i+1}x_{i},$$ which forces $|Ax_{i+1}\cup x_{i+1}A| > |A|+1$, which contradicts \eqref{Prop 4.1A Eq-1}. Therefore, $x_{i+1}x_{2}\leq x_{3}x_{i+1}$. Further, if $x_{i+1}x_{2}=x_{3}x_{i+1}$, then $$ x_{i+1}x_{2}x_{i+1}=x_{3}x_{i+1}^{2} =   x_{3}x_{i}^2 = x_{i}^{2}x_{3} = x_{i+1}^{2}x_{3}.$$ This gives $x_{2}x_{i+1}=x_{i+1}x_{3}$, and  therefore,  $$x_{3}x_{i+1}=x_{i+1}x_{2}<x_{i+1}x_{3}=x_{2}x_{i+1}.$$ This implies $x_{3}<x_{2}$, which is a contradiction. Therefore, \begin{equation}\label{Prop-4.1 Eq-13}
       x_{i+1}x_{2}<x_{3}x_{i+1}.
   \end{equation} Since $x_{i+1}x_{1}, x_{i+1}x_{2} \in Ax_{i+1} \cup x_{i+1}A = Ax_{i+1} \cup 
   \{x_{i}x_{i+1}\}$, it follows from \eqref{Prop 4.1A Eq-1}, \eqref{Prop 4.1A Eq-1.1}, \eqref{Prop-4.1 Eq-12}, and \eqref{Prop-4.1 Eq-13} that 
   \begin{equation*}
      \{x_{i+1}x_{1}, x_{i+1}x_{2}\} = \{x_{1}x_{i+1}, x_{2}x_{i+1}\}.
   \end{equation*} This implies that
   $x_{i+1}x_{1}=x_{1}x_{i+1}$ and $x_{i+1}x_{2}=x_{2}x_{i+1}$. Therefore, by Lemma  \ref{Lemma 4.1}, we obtain that $x_{i+1}\in C_G(A)$, which is a contradiction.
\end{proof}

    \begin{prop}\label{section-4-prop-4.2}
     Let $A$ and  $B$ be two nonempty finite abelian subsets of a right-ordered group $G$ such that the following holds:
     \begin{enumerate}
         \item[\upshape(1)]$\max(A) < \min(B)$,
         \item[\upshape(2)]$\max(A^{2})<\min(B^{2})$,
         \item[\upshape(3)] $\min(B) \notin C_{G}(A)$,
         \item [\upshape(4)] $|A|\geq 4$, $|B|\geq 3$, and $|A\cup B| \geq 7$.
     \end{enumerate} Then $\left|(A\cup B)^{2}\right| \geq  3|A\cup B|-2$.
 \end{prop}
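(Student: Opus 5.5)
Following Proposition \ref{section-4-prop-4.1}, I will argue by contradiction: assume $|(A\cup B)^2|\le 3k-3$ with $k=|A\cup B|$. Write $A=\{x_1<\cdots<x_i\}$ and $B=\{x_{i+1}<\cdots<x_k\}$, and put $y=x_{i+1}=\min(B)$. The strategy is the same as before: first produce $3k-3$ provably distinct products, which forces equality everywhere; then use this rigidity (together with Proposition \ref{Prop-3.1}) to pin down $A$ as a progression and show that $y$ commutes with two consecutive elements of $A$. Lemma \ref{Lemma 4.1} then gives $y\in C_G(A)$, contradicting hypothesis (3).

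\textbf{Counting step.} Since $\max(A^2)<\min(B^2)$ strictly, $A^2\cap B^2=\emptyset$, so Proposition \ref{Prop-3.1} gives $|A^2\cup B^2|\ge (2i-1)+(2(k-i)-1)=2k-2$. This is one more than in Proposition \ref{section-4-prop-4.1}. The mixed products I will use are $Ay$ and $Bx_i$. Because $y\notin C_G(A)$, we have $y\notin\langle A\rangle$, hence $Ay\cap A^2=\emptyset$; symmetrically the argument used for \eqref{Sec-3-Eq-15} gives $Bx_i\cap B^2=\emptyset$, and $yA\cap B^2=\emptyset$ exactly as in \eqref{Prop-4.1-Eq-01A}. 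The trouble is that we no longer have the chain \eqref{Sec-3-Eq-12.1}, so $Ay$ and $Bx_i$ need not avoid $B^2$ and $A^2$ respectively, and they need not be disjoint from each other. I will therefore estimate by ordered blocks instead. We have $Ay<$ (elements of $y$-row)$\le y^2$ and $Bx_i\ge yx_i$, and right-invariance puts $Ay$ below $x_iy$ and $Bx_i$ above $x_i^2$ in a controlled way. I then split into cases according to whether $y^2\le \max(A^2)$ fails or holds relative to $yx_i$, and show that the sets $A^2\cup B^2$, $Ay\setminus\{\ldots\}$, $Bx_i$ together contribute at least $(2k-2)+(k-1)$ distinct elements, except for at most one coincidence. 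This yields $|(A\cup B)^2|\ge 3k-3$, so equality holds. As a consequence $|A^2|=2|A|-1$ or $|B^2|=2|B|-1$ holds with one unit of slack, and Proposition \ref{Prop-3.1} (or Proposition \ref{2|A|} in the slack case) makes $A=\{x,xg,\dots,xg^{i-1}\}$ with $xg=gx$.

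\textbf{Rigidity step.} The products $yx_1,\dots,yx_i$ must land inside the already-counted set. First I show that $j\mapsto yx_j$ is increasing. If it were decreasing, one gets a chain $x_1y<\cdots<x_iy$ together with $yx_i<\cdots<yx_1$ that produces too many new elements; this is the same argument as in Proposition \ref{section-4-prop-4.1}. Next, comparing positions exactly as in \eqref{Prop-4.1 Eq-12} and \eqref{Prop-4.1 Eq-13} gives $yx_1<x_3y$ and $yx_2<x_3y$. By \eqref{Prop-4.1-Eq-01A}, $yx_1$ and $yx_2$ must then lie in $\{x_1y,x_2y\}\cup A^2$. The option in $A^2$ is excluded since $y\notin\langle A\rangle$. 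The cross assignment $yx_1=x_2y$ is excluded by the usual trick: multiply on the right by $y$, and use the fact that $y^2$ commutes with $A$ whenever $y^2\in A^2$, or otherwise use the ordering to derive $x_2<x_1$. Hence $yx_1=x_1y$ and $yx_2=x_2y$, and Lemma \ref{Lemma 4.1} finishes the proof.

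\textbf{Main obstacle.} The hard part is the counting step without the clean separation \eqref{Sec-3-Eq-12.1}. Since $y^2$ may now exceed $x_iy$ and $yx_i$ in either order, overlaps between $Bx_i$ and $B^2$ or $Ay$ must be ruled out case by case. I expect the hypotheses $|A|\ge4$ and $|B|\ge3$ to be used exactly here: with that many elements, a single coincidence can be absorbed, and one can also replace $Bx_i$ by $x_{k}A$ or by $x_iB$ when needed to get fresh elements above $\max(A^2)$. I would first try the ordering $x_1y<\cdots<x_iy<y^2$ combined with $x_iB$ (which lies between $x_iy$ and $x_ix_k<x_k^2$), and treat $y^2\in Ay\cup Bx_i$ as a separate case.
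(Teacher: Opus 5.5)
\textbf{There is a genuine gap, in the counting step.} That step is only sketched, and with the products you chose it cannot work. You claim that the argument behind \eqref{Sec-3-Eq-15} gives $Bx_i\cap B^2=\emptyset$. But that argument needs $x_i\notin\langle B\rangle$, which in Proposition \ref{section-4-prop-4.1} came from the chain \eqref{Sec-3-Eq-12.1}, and that chain is unavailable here. Hypothesis (3) only says that \emph{some} element of $A$ fails to commute with $y=\min(B)$. The top element $x_i$ may commute with $y$, and may even lie in $\langle B\rangle$. For instance, take $B=\{y,yt,\dots,yt^{m-1}\}$ with $yt=ty$ and $t>e$, and $x_i=yt^{-1}$; this is compatible with $x_i<y$ and $x_i^2<y^2$. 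Then $yx_i=x_iy\in Ay$ and
\[
Bx_i=\{y^2t^{-1},y^2,\dots,y^2t^{m-2}\}\subseteq B^2\cup Ay,
\]
so $Bx_i$ adds nothing. The union $A^2\cup B^2\cup Ay\cup Bx_i$ is then only guaranteed $(2i-1)+(2m-1)+i=3k-3-(m-1)$ elements. Swapping in $x_iB$ does not help, since here $x_iB=Bx_i$, and no case split on where $y^2$ sits relative to $x_iy$ and $yx_i$ touches this. In the other direction, the overlaps you worry about never occur. By right-invariance, every element of $Ay$ is at most $x_iy<y^2=\min(B^2)$, and every element of $Bx_i$ is at least $yx_i>x_i^2=\max(A^2)$.

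\textbf{The missing idea.} Take $x_j$ to be the \emph{largest} element of $A$ not commuting with $y$, and count $x_jB$ instead of $Bx_i$.
Since $\langle B\rangle\le C_G(y)$, we have $x_j\notin\langle B\rangle$, so $x_jB\cap B^2=\emptyset$. Because every $x_a>x_j$ commutes with $y$, one also gets $x_jB\cap A^2=\emptyset$ and $Ay\cap x_jB=\{x_jy\}$. For the last claim, suppose $x_ay=x_jx_b$: if $x_a>x_j$ then $x_j=x_ayx_b^{-1}\in C_G(y)$, and if $x_a<x_j$ then $y=x_jx_a^{-1}x_b>x_b$.
This yields $|(A\cup B)^2|\ge(2i-1)+(2(k-i)-1)+(k-1)=3k-3$ with no ordering cases at all. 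Hence $A$ and $B$ are both progressions, $(A\cup B)^2=A^2\cup B^2\cup Ay\cup x_jB$, and $j\in\{i-1,i\}$ by Lemma \ref{Lemma 4.1}.

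\textbf{The rigidity step does not transfer either.} In Proposition \ref{section-4-prop-4.1}, the monotonicity of $a\mapsto ya$ and the exclusion of cross assignments such as $yx_1=x_2y$ relied on $x_iy<y^2=x_i^2<yx_i$ and on $y^2\in\langle A\rangle$. Here $y^2>\max(A^2)$, so $y^2\notin A^2$ and $y^2$ need not commute with $A$. Also, nothing a priori keeps $yx_a$ with $x_a\le x_j$ out of $x_jB$.

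The paper finishes differently. Since $yx_j\notin A^2\cup B^2$, it must lie in $Ay$ or in $x_jB$. Each option is refuted by a case analysis on whether $b\mapsto x_jb$ and $a\mapsto ya$ are increasing or decreasing. This is where $|A|\ge 4$ (two elements of $A$ below $x_j$) and $|B|\ge 3$ (the elements $x_{i+2},x_{i+3}$) are used.
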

\begin{proof} To get a contradiction, we assume $\left|(A\cup B)^{2}\right| \leq  3|A\cup B|-3$.
  Let $A=\{x_{1}, \ldots, x_{i}\}$ and $B = \{x_{i+1},\ldots,x_{k}\}$ be two sets such that $\langle A \rangle$ and $\langle B \rangle$ are  abelian subgroups of $G$ with $$x_{1}  < \cdots <x_{i}<x_{i+1} < \cdots < x_{k},$$ and $x_{i}^{2}<x_{i+1}^{2}$.  Since $ x_{i+1}\notin C_G(A)$, we have  $x_{i+1}\notin \langle A\rangle$. It follows that  \begin{equation}\label{Sec-3-Eq-2}
      A^{2}\cap (x_{i+1}A\cup Ax_{i+1})=\emptyset.
  \end{equation}
   Note that  \begin{equation}\label{Sec-3-Eq-3}
       \langle B\rangle \leq C_G(x_{i+1}).
    \end{equation}
     Define $$A_{0} := \{x_{t} \in A: x_{t}x_{i+1} \neq x_{i+1}x_{t}\}.$$  Clearly $A_{0} \neq \emptyset$. Let $x_{j} = \max(A_{0})$. Then \begin{equation}\label{Sec-3-Eq-4}
        x_{i+1}x_{j}\neq x_{j}x_{i+1}, \end{equation}
     and if $x_{a}\in A$ with $x_{a}>x_{j}$, then   \begin{equation}\label{Sec-3-Eq-5}       x_{a}\in C_G(x_{i+1}).    \end{equation}
      From \eqref{Sec-3-Eq-3} and \eqref{Sec-3-Eq-4}, it is easy to observe that \begin{equation}\label{Sec-3-Eq-6}
         B^{2}\cap (x_{j}B\cup Bx_{j})=\emptyset.
     \end{equation} 
     
     \textbf{Claim 1.} $B^{2}\cap (x_{i+1}A\cup Ax_{i+1})=\emptyset.$
     
     Suppose that there exists $x_{a} \in A$ such that $x_{i+1}x_{a}\in B^2$. Then there exist $x_{b_1},x_{b_2}\in B$ such that $$x_{i+1}x_{a}=x_{b_1}x_{b_2}.$$ This implies $x_{a} = x_{i+1}^{-1}x_{b_{1}}x_{b_{2}} = x_{b_1}x_{i+1}^{-1}x_{b_2} \geq x_{b_{2}}$, which is not possible. Thus $x_{i+1}A \cap B^2 = \emptyset$. Similarly, we can show that  $Ax_{i+1}\cap B^{2}=\emptyset$. Hence,
     \begin{equation}\label{Section-4-EQU-1}
          B^{2}\cap (x_{i+1}A\cup Ax_{i+1})=\emptyset.
     \end{equation} 

     \textbf{Claim 2.} $A^{2}\cap (x_{j}B\cup Bx_{j})=\emptyset.$
     
     Suppose $x_{j}x_{b}\in A^2$ for some $x_{b}\in B$. Then there exist $x_{a_1}, x_{a_2}\in A$ such that $$x_{j}x_{b}=x_{a_1}x_{a_2}.$$ This implies $x_{b} = x_{j}^{-1}x_{a_{1}}x_{a_{2}} \in \langle A \rangle$. Thus, $x_{j}x_{b}=x_{b}x_{j}$. Now, consider the following cases.
     \begin{enumerate}
         \item If $x_{a_1}>x_{j}$ and $x_{a_2}>x_{j}$, then \eqref{Sec-3-Eq-5} implies that $x_{j} = x_{b}^{-1}x_{a_{1}}x_{a_{2}}\in \langle x_{a_1},x_{a_2},x_{b}\rangle\leq C_G(x_{i+1})$, which contradicts \eqref{Sec-3-Eq-4}. 
         
         \item  If $x_{a_1}\leq x_{j}$, then $x_{a_2}=x_{a_1}^{-1}x_{j}x_{b}=x_{j}x_{a_1}^{-1}x_{b} \geq x_{b}$, which is not possible. 
         
         \item If $x_{a_2}\leq x_{j}$, then $x_{a_1}=x_{a_2}^{-1}x_{j}x_{b}=x_{j}x_{a_2}^{-1}x_{b}\geq x_{b}$,  which is again not possible.
     \end{enumerate} Thus, $x_{j}x_{b} \notin A^{2}$ for any $x_{b} \in B$. Similarly, we have $x_{b}x_{j} \notin A^{2}$ for any $x_{b} \in B$. Therefore, 
    \begin{equation}\label{Prop 4.2 Eq-1..1}
          A^{2}\cap (x_{j}B\cup Bx_{j})=\emptyset.
     \end{equation}
     
\noindent Since $A$ and $B$ are abelian with $\max(A^{2})<\min (B^{2}),$ we have \begin{equation}\label{Sec-3-Eq-7}
			A^{2}\cap B^{2}=\emptyset.
		\end{equation} 

        \textbf{Claim 3.} $Ax_{i+1}\cap x_{j} B=\{x_{j}x_{i+1}\}$.
        
		Suppose $x_{a}x_{i+1}=x_{j}x_{b}$  for some  $x_{a}\in A$ and $x_{b}\in B$.  Consider the following cases: 
		\begin{enumerate}
			\item If $x_{a} > x_{j},$ then it follows from \eqref{Sec-3-Eq-3} and \eqref{Sec-3-Eq-5} that 
			$$ x_{j} = x_{a}x_{i+1}x^{-1}_{b} \in \langle x_{a}, x_{i+1}, x_{b}\rangle\leq C_G(x_{i+1}),$$
			which is a contradiction to \eqref{Sec-3-Eq-4}.
			\item If $x_{a}<x_{j}$, then
			$$x_{i+1} = x^{-1}_{a}x_{j}x_{b} =x_{j}x^{-1}_{a}x_{b} > x_{b},$$ which is a contradiction because  
			$ x_{i+1}$ is the smallest element of  $B$.
		\end{enumerate}
		Thus, $x_{a} = x_{j}$, and this implies $x_{i+1} = x_{b}$. Therefore, \begin{equation*}
			Ax_{i+1}\cap x_{j} B=\{x_{j}x_{i+1}\}.
		\end{equation*}  This implies that \begin{equation}\label{Sec-3-Eq-9}
			|Ax_{i+1}\cup x_{j}B|=i+k-i-1=k-1.
		\end{equation}
		 By Proposition \ref{Prop-3.1}, \eqref{Sec-3-Eq-2}, 
         \eqref{Sec-3-Eq-6}, \eqref{Section-4-EQU-1},  \eqref{Prop 4.2 Eq-1..1}, \eqref{Sec-3-Eq-7}, and \eqref{Sec-3-Eq-9}, we have
		\begin{align*}
			3k-3 \geq |(A\cup B)^2| &\geq |A^{2}\cup B^{2}\cup Ax_{i+1}\cup x_{j}B|\\
			& = |A^{2}|+|B^{2}|+|Ax_{i+1}\cup x_{j}B|\\
			&\geq 2i-1+2(k-i)-1+k-1=3k-3.
		\end{align*} Therefore, 
   $$ (A\cup B)^{2}= A^{2}\cup B^{2}\cup Ax_{i+1}\cup x_{j}B,$$
 with  $|A^{2}| = 2i-1 \text{ and } |B^{2}| = 2(k-i)-1$.  By Proposition \ref{Prop-3.1}, we obtain $$A = \{x, xg,xg^{2},\ldots, xg^{i-1}\} \text{ and } B=\{y, yt, yt^{2}, \ldots, yt^{k-i-1}\},$$ where $xg=gx$ and $ty=yt$, with $g\neq e$ and $t\neq e$. Therefore, the elements of the set $A$ can be arranged in one of the following ways:
 $$ x<xg<\cdots<xg^{i-1}$$
 or $$xg^{i-1}<xg^{i-2}<\cdots<x,$$
 and the elements of the set $B$ can be arranged in one of the following ways:
 $$y<yt<\cdots<yt^{k-i-1}$$
or  $$yt^{k-i-1}<yt^{k-i-2}<\cdots<y,$$

 Observe that if $j+2 \leq i$, then $x_{j+1}, x_{j+2} \in C_{G}(x_{i+1})$, and by Lemma \ref{Lemma 4.1}, we have $x_{i+1}\in C_{G}(A)$, which is a contradiction. Therefore, 
$$j\leq i \leq j+1 \ \text{and} \ x_{i}\in \{x_{j+1}, x_{j}\}.$$    

 Note that $x_{i+1}x_{j}\in (A\cup B)^{2}$, but $x_{i+1}x_{j} \notin A^{2} \cup B^{2}$. If $x_{i+1}x_{j}\notin Ax_{i+1} \cup x_{j}B$, then $|(A\cup B)^{2}|\geq 3k-2$, which is a contradiction. So, $x_{i+1}x_{j} \in Ax_{i+1} \cup x_{j}B$. 
 
 First, we assume that $x_{i+1}x_{j} \in x_{j}B$. There exists $x_{b}\in B$  with $x_{b}>x_{i+1}$ such that 
  \begin{equation}\label{thm-2.5-0}
     x_{i+1}x_{j}=x_{j}x_{b}.
     \end{equation}  
Now, consider the following cases. \par

\textbf{Case 1} ($x_{j}x_{i+1}<x_{j}x_{i+2}$). Since $B$ is a geometric progression and $x_{j}x_{i+1}<x_{j}x_{i+2}$, we have \begin{equation}\label{thm-2.2-1.1}
    x_{j}x_{i+1}<x_{j}x_{i+2}<\cdots<x_{j}x_{k}.\end{equation}
Also, we have \begin{equation}\label{thm-2.5-1}
    x_{i+1}x_{j}<x_{i+2}x_{j}<\cdots<x_{k}x_{j}. 
\end{equation} 
Note that if $|Bx_{j} \setminus x_{j}B| =0$, then $Bx_{j} = x_{j}B$ because $|Bx_{j}| = |x_{j}B|$, and it follows from \eqref{thm-2.2-1.1} and  \eqref{thm-2.5-1} that $x_{i+1}x_{j} = x_{j}x_{i+1}$, which contradicts \eqref{Sec-3-Eq-4}. Therefore,  $|Bx_{j} \setminus x_{j}B| \geq 1$. Thus, there exists $x_{c}\in B$ such that $x_{c}>x_{i+1}$ and $x_{c}x_{j}\notin x_{j}B$. Clearly, $x_{c}x_{j}\notin A^{2} \cup B^{2}$. If $x_{c}x_{j}\notin Ax_{i+1}$, then $|(A\cup B)^{2}|\geq 3k-2$, which is a contradiction. Thus,  $x_{c}x_{j}\in Ax_{i+1}$. This implies that there exists $x_{a}\in A$ such that \begin{equation}\label{thm-2.5-2}
    x_{c}x_{j} =x_{a}x_{i+1}.
\end{equation} Since $x_{c}x_{j}\notin x_{j}B$, we must have $x_{a}\neq x_{j}$. If $x_{a}>x_{j}$, then  $x_{j}= x_{c}^{-1}x_{a}x_{i+1}\in \langle x_{c}, x_{i+1},x_{a}\rangle \leq C_G(x_{i+1})$, contradiction to \eqref{Sec-3-Eq-4}. So, $x_{a}<x_{j}$. Since $\langle B\rangle$ is abelian, it follows from \eqref{thm-2.5-2} and \eqref{thm-2.5-0} that $$x_{i+1}x_{a}x_{i+1}=x_{i+1}x_{c}x_{j}=x_{c}x_{i+1}x_{j}=x_{c}x_{j}x_{b}.$$
This gives $x_{a}x_{i+1}=x_{i+1}^{-1}x_{c}x_{j}x_{b}=x_{c}x_{i+1}^{-1}x_{j}x_{b}>x_{j}x_{b}$.
But \eqref{thm-2.2-1.1} implies that $x_{a}x_{i+1}<x_{j}x_{i+1}<x_{j}x_{b}$, which is a contradiction. Therefore, this case is not possible.\\

\textbf{Case 2} ($x_{j}x_{i+1}>x_{j}x_{i+2} \text{ and } x_{i+1}x_{1}>x_{i+1}x_{2}$).  Since $A$ is a geometric progression, we have   \begin{equation}\label{thm-2.5-0.1.1}
    x_{i+1} x_{1} > x_{i+1}x_{2}> \cdots > x_{i+1} x_{j}>\cdots > x_{i+1} x_{i}.
\end{equation} 
 Since $j\leq i \leq j+1$ and $|A|\geq 4$, there exists $x_{a}\in A$ such that \begin{equation}\label{Proposition-4.2-Eq-6.1}
    x_{a} < x_{j}.
\end{equation}  Clearly $x_{i+1} x_{a}\notin A^2\cup B^2$. If $x_{i+1}x_{a}\notin Ax_{i+1}\cup x_{j}B$, then $|(A\cup B)^{2}|\geq 3k-2$, which is a contradiction. Suppose $x_{i+1} x_{a} \in Ax_{i+1}\cup x_{j}B$. Consider the following subcases.\\
\textbf{Subcase 2.1} $(x_{i+1} x_{a} \in A x_{i+1})$.  There exists $x_{a'}\in A$ such that $x_{i+1}x_{a}=x_{a'}x_{i+1}$. Note that if $x_{j}<x_{a'}$, then from \eqref{Sec-3-Eq-5}, we have $x_{i+1}x_{a}=x_{a'}x_{i+1} = x_{i+1}x_{a'}$, and this gives $x_{a'}=x_{a}<x_{j}$, which is contradiction. Therefore $x_{a'}\leq x_{j}$.   Since $x_{i+1}x_{j}=x_{j}x_{b}$, we write $$x_{a'}x_{i+1}x_{j}=x_{i+1}x_{a}x_{j}=x_{i+1}x_{j}x_{a}=x_{j}x_{b}x_{a}.$$ This implies that $x_{i+1}x_{j}=x_{j}x_{a'}^{-1}x_{b}x_{a}\geq x_{b}x_{a}$. But \eqref{thm-2.5-0.1.1} implies that $x_{i+1}x_{a}>x_{i+1}x_{j}\geq x_{b}x_{a}$ and hence $x_{i+1}>x_{b}$, which is not possible.\\
\textbf{Subcase 2.2} $(x_{i+1} x_{a} \in x_{j}B)$. We can write $x_{i+1} x_{a} = x_{j} x_{b'}$ for some $x_{b'} \in B$.  Multiply \eqref{thm-2.5-0} by $x_{b'}$ on the right side, we get
   $$x_{i+1} x_{j} x_{b'} = x_{j} x_{b} x_{b'} = x_{j} x_{b'} x_{b} = x_{i+1} x_{a} x_{b}.$$   This implies $x_{i+1} x_{a}=x_{j} x_{b'} = x_{a} x_{b}$.   Since $x_{i+1} x_{a} > x_{i+1} x_{j}$ and $x_{i+1} x_{j} = x_{j} x_{b}$,  
    we get $x_{a} x_{b} > x_{j} x_{b}$.  
    This gives $x_{a} > x_{j}$, which contradicts \eqref{Proposition-4.2-Eq-6.1}.\\

\textbf{Case 3} ($x_{j}x_{i+1}>x_{j}x_{i+2} \text{ and } x_{i+1}x_{1}<x_{i+1}x_{2}$). In this case, we have \begin{equation}\label{subcase-3.2.2}
    x_{i+1}x_{1}<x_{i+1}x_{2},<\cdots<x_{i+1}x_{i},
    \end{equation} \begin{equation}\label{subcase-3.2.3}
        x_{1}x_{i+1}<x_{2}x_{i+1}<\cdots<x_{i}x_{i+1},
    \end{equation}
  and
    \begin{equation}\label{B}
        x_{j}x_{i+1}>x_{j}x_{i+2}>\cdots x_{j}x_{k}
    \end{equation} because $A$ and $B$ are geometric progression.  Since $j\leq i \leq j+1$ and  $|A|\geq 4$, there exists $x_{a}\in A$ with  $x_{a}<x_{j}$. Clearly $x_{i+1}x_{a}\notin A^{2}\cup B^{2}$, and hence $x_{i+1}x_{a}\in Ax_{i+1}\cup x_{j}B$. Suppose that $x_{i+1} x_{a} \in x_{j}B$. Then there exists some $x_{b'} \in B$ such that  
$$x_{i+1} x_{a} = x_{j} x_{b'}.$$ By \eqref{subcase-3.2.2}, we have  $x_{j}x_{b'}=x_{i+1} x_{a} < x_{i+1}x_{j}=x_{j}x_{b}$. Therefore, by \eqref{B}, we have $x_{b} < x_{b'}$. Also, we have 
$$
x_{j} x_{b} x_{a} =x_{i+1} x_{j} x_{a}=x_{i+1} x_{a} x_{j} = x_{j} x_{b'} x_{j},
$$
and this gives
$x_{b} x_{a} = x_{b'} x_{j}$.
Since $x_{b} < x_{b'}$, we get $x_{a} = x_{b'} x_{b}^{-1} x_{j}>x_{j}$, which contradicts the assumption that $x_a<x_j$. Therefore, $x_{i+1} x_{a} \notin x_{j}B$ for all $x_{a}<x_{j}$, and hence,
$$x_{i+1} x_{a} \in Ax_{i+1}\text{ for all } x_{a}\in A \text{ with } x_{a}\neq x_{j}.$$ This implies \begin{equation}\label{Equation-101}
    Ax_{i+1} \cup x_{i+1}A = Ax_{i+1} \cup \{x_{i+1}x_{j}\} \ \text{and} \  |Ax_{i+1} \cup x_{i+1}A| = |A|+1.
\end{equation} Since $|A|\geq 4$ and $j\leq i \leq j+1$, there exist $x_{j-1}$, $x_{j-2} \in A$ such that $x_{j-2}<x_{j-1}<x_{j}$ and $\{x_{i+1}x_{j-2},x_{i+1}x_{j-1}\}\subset Ax_{i+1}$. Therefore, we have \begin{equation}\label{subcase-3.2.1}
    x_{i+1}x_{j-2}=x_{a_1}x_{i+1}\text{ and }x_{i+1}x_{j-1}=x_{a_2}x_{i+1} \end{equation} for some $x_{a_1}, x_{a_2}\in A$. Since $x_{i+1}x_{j-2}<x_{i+1}x_{j-1}<x_{i+1}x_{j}=x_{j}x_{b}<x_{j}x_{i+1}$, we have $$x_{a_1}<x_{a_{2}}<x_{j}.$$
Let, if possible, $x_{a_2}<x_{j-1}$, then using  \eqref{subcase-3.2.2}, \eqref{subcase-3.2.3}, and \eqref{subcase-3.2.1}, we have $$x_{i+1}x_{1}<\ldots<x_{i+1}x_{j-2}<x_{i+1}x_{j-1} = x_{a_{2}}x_{i+1}<x_{j-1}x_{i+1}<\ldots<x_{i}x_{i+1}.$$ It is easy to observe that $x_{i+1}x_{j}$ is distinct from the above-listed elements; therefore, $|Ax_{i+1} \cup x_{i+1}A|\geq |A|+2$, which contradicts \eqref{Equation-101}. Thus, $x_{j-1} \leq x_{a_{2}} < x_{j}$. Hence $x_{a_{2}}= x_{j-1}$ and $x_{i+1}x_{j-1} = x_{j-1}x_{i+1}$. With a similar argument, we obtain $x_{i+1}x_{j-2}=x_{j-2}x_{i+1}$. Using Lemma \ref{Lemma 4.1}, we obtain $x_{i+1}\in C_G(A)$, which contradicts \eqref{Sec-3-Eq-4}. 

Therefore, from the above discussion, we observe that $x_{i+1}x_{j}\notin x_{j}B$. Now, suppose that $x_{i+1}x_{j}\in  Ax_{i+1}$. Then there exist  $x_{a}\in A$ with $x_{a}<x_{j}$  such that  \begin{equation}\label{thm-2.5-3} x_{i+1}x_{j}=x_{a}x_{i+1}.
   \end{equation}
     
 Now, consider the following cases. \par
     \textbf{Case 1} ($x_{j}x_{i+1}<x_{j}x_{i+2}\text{ and }x_{i+1}x_{1}<x_{i+1}x_{2}$). Since $A$ is a geometric progression, we have
  \begin{equation}\label{thm-2.5-4}
    x_{i+1}x_{1}<x_{i+1}x_{2}<\cdots<x_{i+1}x_{i}.
\end{equation}  
 Also, we  have \begin{equation*}\label{thm-2.5-4-4}
     x_{1}x_{i+1}<x_{2}x_{i+1}<\cdots<x_{i}x_{i+1}.
 \end{equation*}
 Note that $|Ax_{i+1}\setminus x_{i+1}A | \geq 1$, otherwise, we have $Ax_{i+1} = x_{i+1}A$, which implies $x_{i+1} \in C_{G}(A)$. Therefore, there exists $x_{a'}<x_{j}\in A$ such that $x_{i+1}x_{a'}\notin Ax_{i+1}$. Clearly $x_{i+1}x_{a'}\notin A^{2}\cup B^{2}$. If $x_{i+1}x_{a'}\notin x_{j}B$, then clearly $|(A\cup  B)^{2}|\geq 3k-2$, which is a contradiction. Therefore,   $x_{i+1}x_{a'}\in x_{j}B$. This implies that there exist $x_{b'}\in B$ such that 
     $x_{i+1}x_{a'}=x_{j}x_{b'}$. Since $\langle A\rangle$ is abelian, it follows that $$x_{j}x_{b'}x_{j}=x_{i+1}x_{a'}x_{j}=x_{i+1}x_{j}x_{a'}=x_{a}x_{i+1}x_{a'}.$$ Since $x_{j}>x_{a}$, we have  $x_{i+1}x_{a'} = x_{a}^{-1}x_{j}x_{b'}x_{j}=x_{j}x_{a}^{-1}x_{b'}x_{j}>x_{b'}x_{j}$. But $x_{a'}<x_{j}$ and $x_{i+1}\leq x_{b'}$, and from \eqref{thm-2.5-4}, we obtain $x_{i+1}x_{a'}<x_{i+1}x_{j}\leq x_{b'}x_{j}$, which is a contradiction.  Thus, this case is not possible.
 
 \textbf{Case 2} ($ x_{j}x_{i+1}<x_{j}x_{i+2}\text{ and }x_{i+1}x_{1}>x_{i+1}x_{2}$). In this case, we have  \begin{equation}\label{thm-2.5-case-5-eq-2}
    x_{i+1}x_{1}>x_{i+1}x_{2}>\cdots>x_{i+1}x_{j}>\cdots>x_{i+1}x_{i},
\end{equation}
\begin{equation}\label{thm-2.5-case-5-eq-3}
     x_{i+1}x_{j}<x_{i+2}x_{j}<\cdots<x_{k}x_{j},
\end{equation}
and
\begin{equation}\label{thm-2.5-case-5-eq-4}
  x_{j}x_{i+1}<x_{j}x_{i+2}<\cdots<x_{j}x_{k}.  
\end{equation}
Suppose $x_{b}x_{j}\in Ax_{i+1} \text{ for some } x_{b}\in B \text{ with } x_b>x_{i+1}$. There exist some $x_{a'}\in A$ such that  $$x_{b}x_{j}=x_{a'}x_{i+1}.$$
Since $x_{a}x_{i+1}=x_{i+1}x_{j}<x_{b}x_{j}=x_{a^{''}}x_{i+1}$, we have $x_{a}<x_{a'}$. This, together with \eqref{thm-2.5-case-5-eq-2} gives us $x_{i+1}x_{a'}<x_{i+1}x_{a}$. Note  that $$x_{b}x_{i+1}x_{j}=x_{i+1}x_{b}x_{j} = x_{i+1}x_{a'}x_{i+1}<x_{i+1}x_{a}x_{i+1} = x_{i+1}x_{i+1}x_{j}.$$ This gives $x_{b}<x_{i+1}$, which is not possible because $x_{b}$ is the smallest element of the set $B$. Therefore, we have $$x_{b}x_{j}\in x_{j}B \text{ for all } x_{b}\in B \text{ with } x_b>x_{i+1}.$$ 
This implies \begin{equation*}
|x_{j}B\cup Bx_{j}|= |x_{j}B\cup \{x_{i+1}x_{j}\}|= |B|+1.\end{equation*}
Since $|B|\geq 3$, using \eqref{thm-2.5-case-5-eq-3} and \eqref{thm-2.5-case-5-eq-4}, we have $x_{i+2}x_{j}\in\{x_{j}x_{i+1},x_{j}x_{i+2}\}$ and $x_{i+3}x_{j} \in \{x_{j}x_{i+1},x_{j}x_{i+2}, x_{j}x_{i+3}\}$. If $x_{i+2}x_{j}=x_{j}x_{i+1}$, 
then $$x_{a}x_{i+1}=x_{i+1}x_{j}<x_{i+2}x_{j}=x_{j}x_{i+1}.$$This gives $x_{a}<x_{j}$, using  \eqref{thm-2.5-case-5-eq-2}, we get $x_{i+1}x_{j}<x_{i+1}x_{a}$.  Thus,  $$x_{i+2}x_{i+1}x_{j}=x_{i+1}x_{i+2}x_{j}=x_{i+1}x_{j}x_{i+1}< x_{i+1}x_{a}x_{i+1}= x_{i+1}x_{i+1}x_{j}.$$ This gives $x_{i+2}<x_{i+1}$, which is not possible. Therefore $x_{i+2}x_{j}= x_{j}x_{i+2}$. Since $x_{i+3}x_{j}\in x_{j}B$ and $x_{i+2}x_{j}<x_{i+3}x_{j}$, we get $x_{i+3}x_{j}=x_{j}x_{i+3}$. Therefore, by Lemma  \ref{Lemma 4.1}, we obtain $x_{j}\in C_G(B)$, which is a contradiction to  \eqref{Sec-3-Eq-4}. 

\textbf{Case 3} ($x_{j}x_{i+1}>x_{j}x_{i+2}\text{ and }x_{i+1}x_{1}<x_{i+1}x_{2}$). In this case, we have 
\begin{equation}\label{thm-2.5-case-6-eq-4}
    x_{i+1}x_{1}<x_{i+1}x_{2}<\cdots<x_{i+1}x_{i},
\end{equation}  
 and \begin{equation}\label{thm-2.5-case-6-eq-5}
     x_{1}x_{i+1}<x_{2}x_{i+1}<\cdots<x_{i}x_{i+1}.
 \end{equation} It is easy to observe that  $Ax_{i+1} \neq x_{i+1}A$, otherwise, \eqref{thm-2.5-case-6-eq-4} and \eqref{thm-2.5-case-6-eq-5} implies that $x_{i+1} \in C_{G}(A)$. Therefore, there exists $x_{a'}<x_{j}\in A$ such that $x_{i+1}x_{a'}\notin Ax_{i+1}$. Clearly, $x_{i+1}x_{a'}\notin A^{2} \cup B^{2}$, but $x_{i+1}x_{a'}\notin (A\cup B)^{2}$. Therefore,  $x_{i+1}x_{a'}\in x_{j}B$. So, there exist $x_{b'}\in B$ such that 
    $ x_{i+1}x_{a'}=x_{j}x_{b'}$. Since $\langle A\rangle$ is abelian, we have $$x_{j}x_{b'}x_{j}=x_{i+1}x_{a'}x_{j}=x_{i+1}x_{j}x_{a'}=x_{a}x_{i+1}x_{a'}.$$
This gives $x_{i+1}x_{a'}=x_{j}x_{a}^{-1}x_{b'}x_{j}>x_{b'}x_{j}$. But \eqref{thm-2.5-case-6-eq-4} implies that $x_{i+1}x_{a'}<x_{i+1}x_{j}\leq x_{b'}x_{j}$, which is a contradiction.

  \textbf{Case 4} ($x_{j}x_{i+1}>x_{j}x_{i+2}\text{ and }x_{i+1}x_{1}>x_{i+1}x_{2}$).
In this case, we have 
 \begin{equation}\label{thm-2.5-case-7-eq-5}
      x_{j}x_{k}<x_{j}x_{k-1}<\cdots<x_{j}x_{i+1}
  \end{equation}
  because $B$ is a geometric progression. Also, we have
  \begin{equation}\label{thm-2.5-case-7-eq-4}
      x_{i+1}x_{j}<x_{i+2}x_{j}<\cdots<x_{k}x_{j}.
  \end{equation}
  Note that $x_{b}x_{j}\notin A^{2} \cup B^{2}$ for all $x_{b} \in B$. 
   Suppose there exists $x_{b} \in B$ such that 
$x_{b}x_{j}\in Ax_{i+1}$ with  $x_{b}>x_{i+1}.$   This implies \begin{equation}\label{thm-2.5-case-7-eq-6}
    x_{b}x_{j}=x_{a'}x_{i+1} \text{ for some } x_{a'}\in A.
\end{equation}
 Since $x_{a}x_{j} = x_{i+1}x_{j}<x_{b}x_{j} =x_{a'}x_{i+1}$, we have   $x_{a}<x_{a'}$. Using \eqref{thm-2.5-3}, \eqref{thm-2.5-case-7-eq-5}, and \eqref{thm-2.5-case-7-eq-6}, we have  $$x_{j}x_{a}x_{i+1}=x_{j}x_{i+1}x_{j}>x_{j}x_{b}x_{j}=x_{j}x_{a'}x_{i+1}.$$ This implies $x_{j}x_{a}>x_{j}x_{a'}$, and hence $x_{a}>x_{a'}$, which contradicts  $x_{a}<x_{a'}$. Therefore, we obtain that $x_{b}x_{j}\notin Ax_{i+1}$ for all $x_{b} \in B$ with  $x_{b}>x_{i+1}.$ Thus,  $$x_{b}x_{j}\in x_{j}B \text{ for all } x_{b}\in B \text{ with } x_{b}>x_{i+1}.$$ Therefore, $|x_{j}B\cup Bx_{j}|= |B|+1$.
 From \eqref{thm-2.5-case-7-eq-5} and \eqref{thm-2.5-case-7-eq-4}, if $x_{k}x_{j}\leq x_{j}x_{i+3}$, then $$ x_{i+1}x_{j}<x_{i+2}x_{j}<\cdots<x_{k}x_{j}\leq  x_{j}x_{i+3}< x_{j}x_{i+2}< x_{j}x_{i+1},$$ which yields $|x_{j}B\cup Bx_{j}|> |B|+1$. Hence $x_{k}x_{j}\in  \{x_{j}x_{i+2},x_{j}x_{i+1}\}$. If  $x_{k}x_{j}=x_{j}x_{i+1}$, then by \eqref{thm-2.5-case-7-eq-5} and \eqref{thm-2.5-3}, $$x_{j}x_{a}x_{i+1}=x_{j}x_{i+1}x_{j}>x_{j}x_{k}x_{j}=x_{j}x_{j}x_{i+1}.$$ This gives $x_{a}>x_{j}$ , which is not possible. Hence $x_{k}x_{j}=x_{j}x_{i+2}$. Consequently, by \eqref{thm-2.5-case-7-eq-5} and \eqref{thm-2.5-case-7-eq-4}, we obtain  
  \begin{equation}\label{thm-2.5-case-7-eq-7}
    x_{i+2}x_j = x_jx_k \quad \text{and} \quad x_{i+3}x_j = x_jx_{k-1}.
  \end{equation}
As $B=\{y,yt,yt^2,\ldots,yt^{k-i-1}\}$, where $ty=yt$ and $t\neq e$ for some elements $y,t\in G$, we have the following cases.
\begin{enumerate}
    \item If $t<e$, then $yt^{k-i-1}<yt^{k-i-2}<\cdots<yt<y$,  and hence $x_{i+1}=yt^{k-i-1}=tx_{i+2}$. From \eqref{thm-2.5-case-7-eq-7}, we get $tx_{j}t=x_{j}$. Now, using 
$x_{i+2}x_{j}=x_{j}x_{k}$, we have $$x_{j}x_{k}=x_{i+2}tx_{j}t=x_{i+1}x_{j}t=x_{a}x_{i+1}t,$$ which implies $x_{i+1}t>x_{k}$, and hence $t^{k-i}>e$, which contradicts our assumption $t<e$.
\item  If $t>e$, then $y<yt<yt^{2}<\cdots<yt^{k-i-1}$. Again using \eqref{thm-2.5-case-7-eq-7}, we get $tx_{j}t=x_{j}$. Since $x_{j}x_{i+1}>x_{j}x_{k}$, we obtain $$x_{j}x_{a}x_{i+1}=x_{j}x_{i+1}x_{j}>x_{j}x_{k}x_{j}=x_{j}^{2}x_{i+2}=x_{j}^{2}tx_{i+1}.$$ This implies $x_{j}x_{a}>x_{j}^{2}t$. Since  $x_{a}tx_{j}t=x_{a}x_{j}=x_{j}x_{a}>x_{j}^{2}t$, we get $x_{a}t>x_{j}$. Therefore $x_{a}t>tx_{j}t$, and  this gives $x_{a}>tx_{j}$. But  $tx_{j}>x_{j}>x_{a}$, which is a contradiction.
\end{enumerate}
Therefore, $x_{i+1}x_{j}\notin Ax_{i+1}\cup x_{j}B$. Hence, we obtain $|(A\cup B)^{2}|\geq 3k-2$. This completes the proof.
\end{proof}

\begin{proof}[Proof of Theorem \ref{Theorem-2}] Without loss of generality, we assume that $\min(B)\notin C_G(A)$. Under this assumption, the result follows immediately from Proposition \ref{section-4-prop-4.1} and Proposition \ref{section-4-prop-4.2}.
\end{proof}
          

 \section{Proof of Theorem \ref{Theorem-3}}\label{section-6}
          
          The Baumslag-Solitar groups $\mathrm{BS}(p,q)$ are two-generated groups with one relation defined as follows:$$\mathrm{BS}(p,q):=\langle a,b : ab^{p}=b^{q}a\rangle,$$ where $p$ and  $q$ are integers. Both direct and inverse problems have been studied extensively in these groups (see \cite{Freiman_Herzog_2014, Freiman_et_al_2015, Chahal_Kaur_2025}).  In this section, we focus on inverse problems for the $\mathrm{BS}(1,q)$ group.  Since the group $\mathrm{BS}(1,q)$ is isomorphic to  $\mathbb{Z}[1/q]\rtimes\mathbb{Z}$, where  $\mathbb{Z}[1/q]\rtimes\mathbb{Z}$ is a right-ordered  group under the operation $$(r,n)(s,m) = (r + q^{n}s,n + m), ~\text{for}~r,s\in \mathbb{Z}[1/q]~\text{and}~n,m\in \mathbb{Z}.$$  Therefore, the following proposition also holds for the  Baumslag-Solitar group $\mathrm{BS}(1,q)$ when $q<-1$.
          
\begin{prop}\textup{\cite[Lemma 4.4]{Neetu_Mohan_Shankar}}\label{Sec-5-Lemma-1} Let $q< -1$ be an integer. Let $S$ be a nonempty finite subset of a right ordered group $\mathbb{Z}[1/q]\rtimes\mathbb{Z}$ with $|S|\geq 3$ and $\min (S)=e$. Let $m= \max (S)$ and $A = S \setminus \{m\}$. Then either 
     $$ \langle S\rangle ~\text{is an abelian subgroup of $G$,}$$
     or  $$ \left|A^{2}\right| \leq \left|S^{2}\right|-3.$$
   
	\end{prop}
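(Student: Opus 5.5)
The plan is to work inside $\mathbb{Z}[1/q]\rtimes\mathbb{Z}$ and use the \emph{level} of an element, meaning its $\mathbb{Z}$-coordinate. The right order compares levels first and then first coordinates inside a level. This is right-invariant: $(r,n)(s,m)=(r+q^ns,n+m)$, and comparing two such products at the same level amounts to comparing $r$ with $r'$. It is not left-invariant, because left multiplication by an element of level $m$ rescales first coordinates by $q^m$, which may be negative. Since $e=\min(S)$, every element of $S$ has level $\geq 0$. Let $N$ be the level of $m=\max(S)$, which is the largest level occurring in $S$. If $N=0$, then $S\subseteq \mathbb{Z}[1/q]\times\{0\}$ and $\langle S\rangle$ is abelian. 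So I would assume $N\geq 1$ and aim to exhibit three elements of $S^2\setminus A^2$. Each such element is of the form $m^2$, $ms$ or $sm$ with $s\in S$.

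Let $T=\{(r,N)\in S\}$ be the top-level part of $S$, and write $T=R\times\{N\}$ with $R\subseteq\mathbb{Z}[1/q]$. Products of level $2N$ come only from $T\cdot T$, and $(r,N)(s,N)=(r+\lambda s,2N)$ with $\lambda=q^N\neq 1$. So, restricted to level $2N$, the count I need is $|(R+\lambda R)\setminus(R'+\lambda R')|\geq 3$, where $R'=R\setminus\{\max R\}$.

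\textbf{Case 1: $|T|=1$.} Let $N'<N$ be the top level of $A$ and $a_1=\max(A)$. Then $A^2$ lives at levels $\leq 2N'$, while $m^2$, $ma_1$ and $a_1m$ have levels $2N$, $N+N'$, $N+N'$, all larger than $2N'$. The element $m^2$ is distinct from the other two. The elements $ma_1$ and $a_1m$ are distinct unless $m$ and $a_1$ commute. If they commute, I would replace $a_1$ by the next element of $A$ that does not commute with $m$. If no such element exists, then $m$ centralises $A$ and $\langle S\rangle$ is abelian, since $A$ lies at levels $\le N'$ and either $A=\{e\}$-type trivial or I pass to the inductive description of $A$. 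In either case this produces three new elements.

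\textbf{Case 2: $|T|\geq 2$, with $\lambda>0$ ($N$ even).} Write $r_1<\dots<r_t=\max R$. The elements $r_t+\lambda r_t$, $r_t+\lambda r_{t-1}$ and $r_{t-1}+\lambda r_t$ all exceed $\max(R'+\lambda R')=(1+\lambda)r_{t-1}$. They are pairwise distinct because $\lambda\neq 1$. This gives the required three elements.

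\textbf{Case 3: $|T|\geq 2$, with $\lambda<0$ ($N$ odd, using $q<-1$).} This is the case I expect to be the main obstacle. Here $x\mapsto\lambda x$ reverses order, so $\max(R+\lambda R)=r_t+\lambda r_1$ and $\min(R+\lambda R)=r_1+\lambda r_t$. Both are new provided $t\geq 2$. For a third new element I would try $r_t+\lambda r_t$ and $r_t+\lambda r_2$ and compare them with the extremes $r_{t-1}+\lambda r_1$ and $r_1+\lambda r_{t-1}$ of $R'+\lambda R'$. If all candidates land inside $R'+\lambda R'$, I would use $|\lambda|=|q|^N\geq 2$ to derive an arithmetic contradiction. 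Failing that, I would fall back to the cross-level products $ma$ and $am$ with $a$ of lower level, exactly as in Case 1. Those products cannot lie in $A^2$ unless $m$ commutes with $a$, and if it does for all such $a$, together with $T$ commuting, then $\langle S\rangle$ is abelian.

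Combining the three cases gives either $\langle S\rangle$ abelian or $|A^2|\leq|S^2|-3$.
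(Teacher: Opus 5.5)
Your setup (the level-first order, the trivial case $N=0$, and the reduction of level $2N$ to $R+\lambda R$) is correct, and Case 2 is complete. Cases 1 and 3, however, have genuine gaps. The paper only imports this lemma from the earlier work, so there is no in-paper proof to compare against.

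\textbf{Case 1.} When $a_1=\max(A)$ commutes with $m$, you pass to the next element $c\in A$ not commuting with $m$. But $c$ may sit at a level $\ell<N'$ with $N+\ell\le 2N'$. Then your only tool (``$A^2$ lives at levels $\le 2N'$'') says nothing about whether $mc$ or $cm$ lies in $A^2$. Also, ``$m$ centralises $A$, hence $\langle S\rangle$ is abelian'' is not justified by what you wrote, since $A$ itself could be nonabelian.

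Both points are fixed by one computation: $(r,n)$ and $(s,k)$ commute iff $r(1-q^k)=s(1-q^n)$. So for $N\neq 0$ the centraliser $C_G(m)$ has at most one element on each level and is abelian. Now let $c$ be the largest element of $A$ outside $C_G(m)$, and let $\ell$ be its level.
\begin{itemize}
\item All levels in $A$ are $<N$, so any $xy\in A^2$ of level $N+\ell$ has both factors of level $>\ell$.
\item Hence $x,y>c$, so $x,y\in C_G(m)$ and $xy\in C_G(m)$.
\item But $mc,cm\notin C_G(m)$, because $mc\neq cm$.
\end{itemize}
Thus $m^2$, $mc$, $cm$ are three new elements.

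\textbf{Case 3.} This is the real problem. You establish only two new elements at level $2N$; the third is a plan (``I would try\ldots'', ``failing that\ldots''), not an argument. The obvious candidates $r_t+\lambda r_2$, $r_2+\lambda r_t$ and $(1+\lambda)r_t$ all lie between $\min(R'+\lambda R')$ and $\max(R'+\lambda R')$ whenever $r_2-r_1\ge|\lambda|(r_t-r_{t-1})$. So excluding them requires genuine combinatorics that you have not supplied.

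Your fallback rests on a false claim. In this case $A^2$ contains $tb$ and $bt$ for $t\in T\setminus\{m\}$, so $mb\in A^2$ is possible without $mb=bm$. For example, take $q=-2$ and $S=\{(0,0),(\tfrac12,0),(0,1),(1,1)\}$. Then $m=(1,1)$ and $b=(\tfrac12,0)$ give $mb=(0,1)\in A^2$, while $bm=(\tfrac32,1)$.

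Moreover, the abelian alternative can never occur in Cases 2 and 3: $(r,N)$ and $(r',N)$ commute iff $(r-r')(1-q^N)=0$. So in Case 3 you must actually exhibit a third element of $S^2\setminus A^2$. One route is to prove $|(R+\lambda R)\setminus(R'+\lambda R')|\ge 3$ for $\lambda\le -2$. Another is a lower-level count that correctly accounts for the products with $T\setminus\{m\}$. That argument is exactly the missing core of the proof.
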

   \begin{prop}\textup{\cite[Theorem 3.5]{Neetu_Mohan_Shankar}}\label{NM-k=3}
       	Let $S$ be a nonempty subset of a right-ordered group $G$  with $|S|=3$ and $\min(S)=e$. If $\left|S^{2}\right| \leq 5$, then $\langle S \rangle$ is an abelian  subgroup of $G$.
   \end{prop}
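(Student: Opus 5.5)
We write $S=\{e,x,y\}$ with $e<x<y$. The plan is to show directly that if $xy\neq yx$, then $S^{2}$ contains six distinct elements, contradicting $|S^{2}|\leq 5$. This suffices, because $\langle S\rangle=\langle x,y\rangle$ is abelian as soon as $xy=yx$. The only tool needed is right-invariance of the order: from $a<b$ we may conclude $ac<bc$, but we may not multiply on the left.

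The key step is to produce five elements of $S^{2}$ in a strict chain.
\begin{itemize}
\item Multiplying $e<x<y$ on the right by $y$ gives $y<xy<y^{2}$.
\item Together with $e<x<y$, this yields $e<x<y<xy<y^{2}$.
\end{itemize}
So $S^{2}\supseteq\{e,x,y,xy,y^{2}\}$, and these five elements are pairwise distinct.

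Next we place $yx$ relative to this chain.
\begin{itemize}
\item Multiplying $e<x<y$ on the right by $x$ gives $x<x^{2}<yx$. Hence $yx>x>e$, so $yx\notin\{e,x\}$.
\item If $yx=y$, cancelling $y$ on the left gives $x=e$, which is impossible.
\item If $yx=y^{2}$, cancelling $y$ on the left gives $x=y$, which is impossible.
\item If $xy\neq yx$, then $yx\neq xy$ by assumption.
\end{itemize}
Thus, under $xy\neq yx$, the element $yx$ is a sixth element of $S^{2}$, giving $|S^{2}|\geq 6$ and the required contradiction.

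The argument is essentially routine. The only point needing care is to use right multiplication alone when comparing elements. Where left multiplication would be convenient, we instead rule out coincidences such as $yx=y$ and $yx=y^{2}$ by group cancellation, not by order comparisons. Ordering $yx$ against $xy$ is unnecessary; distinctness is all we need. The hypothesis $\min(S)=e$ is what makes the chain $e<x<x^{2}<yx$ available.
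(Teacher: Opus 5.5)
Your proof is correct: when $xy\neq yx$, the six elements $e,x,y,xy,y^{2},yx$ are pairwise distinct, and you rule out coincidences for $yx$ using only right multiplication and left cancellation, which is exactly what a right order permits. The paper does not prove this statement itself (it only cites it from earlier work), but the same argument appears in its proof of Proposition \ref{lemma 1-theorem-2.13}: the chain $e<x_2<x_3<x_2x_3<x_3^2$ together with the extra element $x_3x_2$. So your approach matches the paper's, and you additionally spell out why $yx$ is new, which the paper leaves as ``easy to observe''.
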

\begin{prop}\textup{\cite[Theorem 3.6]{Neetu_Mohan_Shankar}}\label{NM-k=4}
		Let $S$ be a nonempty subset of a right-ordered group $G$  with $|S|=4$ and $\min(S)=e$. If $\left|S^{2}\right| \leq 8$, then $\langle S \rangle$ is an abelian  subgroup of $G$.
	\end{prop}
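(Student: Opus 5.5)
The plan is to write $S=\{e,a,b,c\}$ with $e<a<b<c$ and split according to whether $T=S\setminus\{c\}=\{e,a,b\}$ is abelian.

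\textbf{Case 1: $T$ is abelian.} Since $|T|=3$, $c\notin T$ and $|S^{2}|\le 8=3|S|-4$, the result \cite[Theorem 3.1]{Neetu_Mohan_Shankar} quoted in the introduction applies with $A=T$ and $y=c$. It gives directly that $\langle S\rangle$ is abelian.

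\textbf{Case 2: $T$ is not abelian.} By Proposition~\ref{NM-k=3} (the case $|S|=3$, $\min=e$) we then have $|T^{2}|\ge 6$. It therefore suffices to show that $S^{2}\setminus T^{2}$ contains at least three elements, since then $|S^{2}|\ge 9$, a contradiction.

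The basic tool is right invariance together with $e=\min(S)$. Every $s\in S$ satisfies $s\ge e$, hence $st\ge t$ for all $s,t\in S$. Applying this to the chain $e<a<b<c$ gives
$$c<ac<bc<c^{2}.$$
The natural candidates for new elements are $bc$ and $c^{2}$, together with $c$, $ac$ or $cb$.

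To show these avoid $T^{2}$, I would compare each with the products of $T$ by a case analysis. Suppose, for instance, $c^{2}=st$ with $s,t\in T$. Since $t\le b<c$, right invariance gives $st<sc\le c^{2}$ when $s\le c$, which is impossible. The same argument gives $xc\notin T^{2}$ whenever $xc$ would have to equal a product $st$ with $t<c$ and $s\le x$. The delicate coincidences are those with $s>x$: for example $ac=ba$, $ac=b^{2}$, $bc\in\{ab,ba,b^{2}\}$, or $c$ itself lying in $T^{2}$, such as $c=a^{2}$, $c=ab$ or $c=ba$.

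In each such coincidence $c$ is expressed in terms of $a,b$. I would substitute that expression back into $S^{2}$ and count directly. Here I would use the fact that $T$ nonabelian forces $ab\ne ba$, and an element $cs$ with $s\in\{a,b\}$ then produces a new product, since left multiplication need not preserve order. The aim is to show that at most one of the candidate elements can collapse into $T^{2}$ without creating a compensating new element, so that at least three new elements always remain.

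\textbf{Main obstacle.} I expect the hardest step to be controlling the left-multiplication products $ca$, $cb$. Their position relative to $T^{2}$ is not determined by the order, because $G$ is only right-ordered. There I would use the same device as in Proposition~\ref{prop-3} and Proposition~\ref{section-4-prop-4.1}. An equality such as $cb=sc$ is multiplied on the right by a suitable element and compared using right invariance, which either forces a commutation relation or contradicts the chain above. Any commutation relation obtained this way, for example $cb=bc$, is then combined with Lemma~\ref{Lemma 4.1} to push toward $\langle S\rangle$ abelian, contradicting that $T$ is nonabelian.
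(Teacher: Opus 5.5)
\textbf{There is a genuine gap in Case 2.} Your Case 1 and the bound $|T^{2}|\ge 6$ are fine, but Case 2 is not actually proved, and the one concrete tool you give for it is invalid.

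In ``$c^{2}=st$ with $t<c$, right invariance gives $st<sc$'' you multiply $t<c$ on the \emph{left} by $s$. A right-ordered group does not allow this; only $sc\le c^{2}$, from $s\le c$, is legitimate. Your rule that $xc\notin T^{2}$ whenever $xc=st$ with $s\le x$ and $t<c$ rests on the same error.

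Both claims are false. Take the Klein bottle group $\mathbb{Z}\rtimes\mathbb{Z}$ with $(r,n)(s,m)=(r+(-1)^{n}s,\,n+m)$, ordered by $n$ first and then by $r$; this order is right-invariant. Let $e=(0,0)$, $a=(0,1)$, $b=(1,1)$, $c=(2,1)$. Since $(r,1)(s,1)=(r-s,2)$, you get $a^{2}=b^{2}=c^{2}=(0,2)$, $bc=ab=(-1,2)$ and $cb=ba=(1,2)$. So $bc$, $c^{2}$ and $cb$ all lie in $T^{2}$; note that $b\cdot b>b\cdot c$ although $b<c$. Here $|S^{2}|=9$, so the statement survives. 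However, the three new elements are $c$, $ac$ and the left product $ca=(2,2)$, which is not among your ``natural candidates''.

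So the whole count hinges on locating the left products $ca$ and $cb$. That is exactly what you postpone as the ``main obstacle''. The rest (``substitute and count directly'', ``at most one candidate collapses'') describes a case analysis rather than carrying it out. Your closing device also needs a different tool. Lemma~\ref{Lemma 4.1} does not help here, but a relation like $cb=bc$ does finish the job via the theorem you used in Case 1.

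\textbf{How to close the gap.}

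First, if two of $a,b,c$ commute, the triple $\{e,x,y\}$ they form with $e$ is abelian. The theorem you used in Case 1, applied to that triple and the fourth element, then gives $\langle S\rangle$ abelian. So assume $a,b,c$ are pairwise noncommuting.

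Right multiplication alone gives seven distinct elements $e<a<b<c<ac<bc<c^{2}$ of $S^{2}$; call this set $U$. It suffices to find two elements of $S^{2}\setminus U$.

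Since $ca>a$ and $ca\ne c,ac,c^{2}$, one has $ca\in U$ only if $ca\in\{b,bc\}$. Since $cb>b^{2}>ab>b$ and $cb\ne c,bc,c^{2}$, one has $cb\in U$ only if $cb=ac$.

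If $cb=ac$, then $ab<b^{2}$ lie in $(b,ac)$, whose only point of $U$ is $c$. Moreover, $ab=c$ or $b^{2}=c$ would turn $cb=ac$ into $a=b$. So both $ab$ and $b^{2}$ are new.

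Otherwise $cb$ is new, and one more new element is needed:
\begin{itemize}
\item If $ca\notin U$, take $ca$.
\item If $ca=b$, take $a^{2}$, because $a<a^{2}<ba<ca=b$ and $(a,b)$ misses $U$.
\item If $ca=bc$, then $a^{2}<ba<ca=bc$. Here $a^{2}$ is none of $b,c,ac$, by noncommutativity and $a\ne c$. If $a^{2}=cb$, then one of $ab<b^{2}$, both in $(b,a^{2})\subseteq(b,bc)$, avoids $\{c,ac\}$. Indeed, $ab=c$ and $b^{2}=ac$ together would give $b^{2}=a^{2}b$, i.e.\ $b=a^{2}$, which commutes with $a$.
\end{itemize}

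Hence $|S^{2}|\ge 9$, contradicting $|S^{2}|\le 8$.
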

\begin{prop}\label{lemma 1-theorem-2.13}
      Let $S$ be a nonempty finite subset of right ordered group $G$ with  $\left|S\right| =3$, $e =\min(S)$ and  $ \left|S^{2}\right| = 6$. Then either $\langle S\rangle$ is abelian
   or  $S=\{e, x, xg\}$ with $e<x<xg$, $xg\neq gx$ and $(xg)^{2}=x^{2}$.
\end{prop}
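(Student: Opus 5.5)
Write $S=\{e,x,z\}$ with $e<x<z$. Then $S^{2}$ contains the nine products $e$, $x$, $z$, $x^{2}$, $z^{2}$, $xz$, $zx$, and (from the pairs with $e$) nothing more. Since $e$ is the minimum, right-invariance gives $x=ex<x\cdot x$, $xz<z\cdot z$, and $z<xz$, $x<zx$. If $xz=zx$, then $\langle S\rangle=\langle x,z\rangle$ is abelian and we are done. So I will assume $xz\neq zx$ and show that $|S^{2}|=6$ forces exactly one coincidence among the remaining products, namely $z^{2}=x^{2}$ up to renaming.

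\textbf{Step 1: the five guaranteed elements.} Right-multiplying $e<x<z$ by $x$ and by $z$ gives the chains
\[
x<x^{2}<zx, \qquad z<xz<z^{2}.
\]
Together with $e<x<z$, this shows that $e,x,z,xz,z^{2}$ are pairwise distinct. Indeed, $e$ is the minimum. Also $x<z<xz<z^{2}$, and $x\ne xz$ and $x\ne z^2$ because $z>e$ and $xz> z$. So these five elements lie in $S^{2}$. The two further elements $x^{2}$ and $zx$ are distinct (since $x<z$ implies $x^2<zx$), and $xz\neq zx$. Hence $|S^{2}|=6$ means that, among the seven elements $e,x,z,x^2,xz,zx,z^2$, there is exactly one coincidence, and it must involve $x^{2}$ or $zx$.

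\textbf{Step 2: case analysis on the coincidence.} The candidates are as follows.
\begin{itemize}
\item $x^{2}=z$. Then $z\in\langle x\rangle$, so $S$ is abelian. Exclude it.
\item $x^{2}\in\{xz,z^{2}\}$. The case $x^{2}=xz$ gives $x=z$, which is impossible. So the remaining option is $x^{2}=z^{2}$.
\item $zx=z$. This gives $x=e$, which is impossible.
\item $zx\in\{z^{2}\}$. This gives $x=z$, which is impossible.
\item $zx=x$. This gives $z=e$, which is impossible.
\end{itemize}
Since $x^{2}<zx$ and $x<x^2$, the element $zx$ can only equal $z$, $xz$, or $z^{2}$. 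The first and last were excluded above, and $zx=xz$ is excluded by assumption. So $zx$ must be new, and the coincidence has to be $x^{2}=z^{2}$. I will double-check the inequality placements for $x^{2}$ against $e$ and $x$, which are trivial.

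\textbf{Step 3: normalization.} Setting $g=x^{-1}z$ gives $z=xg$ and $S=\{e,x,xg\}$ with $e<x<xg$. Moreover, $xg\neq gx$ is equivalent to $xz\ne zx$ (since $gx=x^{-1}zx$), and $(xg)^{2}=x^{2}$ is exactly the coincidence found in Step 2. The main (mild) obstacle is making sure every possible coincidence has been ruled out by right-invariance alone. Left multiplication is unavailable, so each comparison must come from right-multiplying a known inequality, as done in Step 1.
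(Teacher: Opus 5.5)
Your proof is correct and follows essentially the same route as the paper. Both assume $xz\neq zx$, use right-invariance to get $e<x<z<xz<z^{2}$, observe that $zx$ is a sixth distinct element, and conclude from $x^{2}\in S^{2}$ that $x^{2}=z^{2}$, whence $S=\{e,x,xg\}$ with $g=x^{-1}z$. Your version just spells out the comparisons and cancellation steps that the paper leaves as ``easy to observe.''
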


\begin{proof}
     Let $S=\{x_{1},x_{2}, x_{3}\}$ with $e=x_{1}<x_{2}<x_{3}$. If $x_{2}x_{3} = x_{3}x_{2}$, then $\langle S \rangle$ is abelian. Assume that $x_{2}x_{3} \neq  x_{3}x_{2}$. Then $x_{2}^{2} \neq x_{3}$. Note that  $$e<x_{2}<x_{3}<x_{2}x_{3}<x_{3}^{2}.$$ It is easy to observe that  $x_{3}x_{2}$ is not equal to any of the above-listed  elements. Therefore, $$S^{2} = \{e,x_{2},x_{3}, x_{2}x_{3}, x_{3}x_{2},x_{3}^{2}\}.$$ Since $x_{2}^{2} \in S^{2}$, we have $x_{2}^{2} = x_{3}^{2}$. This gives $S = \{e,x,xg\},$ where $x=x_{2}$ and $g=x_{2}x_{3}^{-1}$.
\end{proof}
\begin{prop} \label{lemma 2-theorem-2.13}
Let $q<-1$ be an integer.
     Let $S$ be a nonempty finite subset of $\mathrm{BS}(1,q)$ with  $\left|S\right| =4$, $e =\min(S)$, and  $ \left|S^{2}\right| = 9$.  Then there exist $x, y, g \in \mathrm{BS}(1,q)$ such that one of the following holds: 
   \begin{enumerate}
    \item[\upshape(i)] $\langle S\rangle$ is abelian,
    \item [\upshape(ii)]  $S=\{e, x, xg, (xg)^{2}\}$ with $e<x<xg<(xg)^{2}$, $xg\neq gx$, and  $x^{2}=(xg)^{2}$,
    \item [\upshape(iii)] $S=\{e, x, xg, xg^{2}\}$ with $e<x<xg<xg^{2}$, $xg\neq gx$, and $(xg^{2})^{2}=(xg)^{2}=x^{2}$.
\end{enumerate}
\end{prop}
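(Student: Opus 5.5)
Write $S=\{e,x_2,x_3,x_4\}$ with $e<x_2<x_3<x_4$, and assume $\langle S\rangle$ is not abelian. The plan is to combine Proposition \ref{Sec-5-Lemma-1} with Proposition \ref{lemma 1-theorem-2.13} and then do an order-based case analysis.

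\textbf{Reduction to the three-element set.} Put $m=x_4$ and $A=\{e,x_2,x_3\}$. By Proposition \ref{Sec-5-Lemma-1} we get $|A^2|\le |S^2|-3=6$.

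\begin{itemize}
\item If $\langle A\rangle$ is not abelian, Proposition \ref{NM-k=3} forces $|A^2|=6$.
\item Then Proposition \ref{lemma 1-theorem-2.13} gives $A=\{e,x,xg\}$ with $xg\neq gx$ and $(xg)^2=x^2$.
\item If instead $\langle A\rangle$ is abelian (so $x_2x_3=x_3x_2$), the non-commuting must involve $x_4$. In that case I would instead remove the minimum, or apply the symmetric argument to $\{e,x_2,x_4\}$ or $\{e,x_3,x_4\}$. The aim is to show that some three-element subset containing $e$ is non-abelian with six products, and hence has the structure of Proposition \ref{lemma 1-theorem-2.13}.
\end{itemize}

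\textbf{Counting in the non-abelian case.} Here $S^2$ contains:
\begin{itemize}
\item the six elements $e,x,xg,x^2=(xg)^2,\; x\cdot xg,\; xg\cdot x$ of $A^2$;
\item $x_4$;
\item the products $xx_4,\ x_4x,\ xgx_4,\ x_4xg,\ x_4^2$.
\end{itemize}
Using right-orderability I get the chain
\[
x_4<xx_4<xgx_4<x_4^2 .
\]
Comparing with $x_4x<x_4xg$ (or its reverse), exactly three of these new products lie outside $A^2\cup\{x_4\}$. This forces coincidences such as $x_4^2\in\{x^2,\,x\cdot xg,\,xg\cdot x\}$ or $x_4x=xgx_4$, and so on.

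I will go through the possible coincidences, in each case solving for $x_4$ in terms of $x$ and $g$. The expected outcomes are:
\begin{itemize}
\item $x_4=(xg)^2$, which gives (ii), noting $x^2=(xg)^2$;
\item $x_4=xg^2$ with $(xg^2)^2=x^2$, which gives (iii).
\end{itemize}
Each remaining coincidence should contradict the order or force commutation; Lemma \ref{Lemma 4.1} handles the latter.

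\textbf{Main obstacle.} The hard part is excluding spurious coincidences, for instance $x_4x=x\,xg$, where the order alone does not decide. For these I would pass to coordinates in $\mathbb{Z}[1/q]\rtimes\mathbb{Z}$, writing $x=(r,n)$ and $g=(s,m)$. The relation $(xg)^2=x^2$ with $xg\ne gx$ forces a specific parity or sign structure, using $q<-1$. Then each candidate $x_4$ can be tested directly against the equations; I expect the condition $q\ne -1$ to eliminate the extra solutions.
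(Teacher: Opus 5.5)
Your reduction when $\langle A\rangle$, with $A=\{e,x_2,x_3\}$, is non-abelian is exactly the paper's: Proposition~\ref{Sec-5-Lemma-1}, then Proposition~\ref{NM-k=3}, then Proposition~\ref{lemma 1-theorem-2.13}.

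\textbf{The gap: the case where $\langle A\rangle$ is abelian.} Your fallback for this case cannot work.
\begin{itemize}
\item Proposition~\ref{Sec-5-Lemma-1} only controls $S$ with its \emph{maximum} removed. Nothing bounds $|\{e,x_2,x_4\}^2|$ or $|\{e,x_3,x_4\}^2|$ by $6$.
\item Removing the minimum destroys the hypothesis $\min=e$ that Propositions~\ref{NM-k=3} and~\ref{lemma 1-theorem-2.13} need.
\item Even if some other three-element subset were non-abelian with six products, your counting step is built on $\{e,x_2,x_3\}$ having that form, so it would not close.
\end{itemize}
In fact both (ii) and (iii) have $x_2x_3\ne x_3x_2$. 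So when $\langle A\rangle$ is abelian and $\langle S\rangle$ is not, you must show this case is \emph{impossible}.

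The missing tool is Proposition~\ref{prop-2}. Here $A$ is abelian, $x_4>\max(A)$, and $|S^2|=3\cdot 4-3$. So either $\langle S\rangle$ is abelian, or $A=\{x,xg,xg^2\}$ with $[xg^i,x_4]=g^{2-2i}$. Writing the progression increasingly forces $x=e$, and then $i=0$ gives $e=[e,x_4]=g^2$. This is impossible because right-ordered groups are torsion-free.

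\textbf{The non-abelian case.} This part is only sketched, and some of the sketch is off.
\begin{itemize}
\item The count is misstated. Exactly three distinct elements of $\{x_4,x_2x_4,x_4x_2,x_3x_4,x_4x_3,x_4^2\}$ lie outside $A^2$. Also $x_4$ may itself lie in $A^2$: in (ii), $x_4=x^2$.
\item The coincidence you flag as the main obstacle, $x_4x=x\cdot xg$, is excluded at once by order. Right-invariance gives $e<x_2<x_3<x_2x_3<x_3^2=x_2^2<x_3x_2$, so $\max(A^2)=x_3x_2<x_4x_2$.
\item No coordinates in $\mathbb{Z}[1/q]\rtimes\mathbb{Z}$ are needed. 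After the reduction, only right-invariance and torsion-freeness are used.
\end{itemize}

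The step that organizes the analysis is this. If $x_4x_2<x_2x_4$, then $x_4x_2<x_2x_4<x_3x_4<x_4^2$ are four elements above $\max(A^2)$, so $|S^2|\ge 10$. Hence $x_2x_4\le x_4x_2$, which leaves three cases:
\begin{itemize}
\item Equality forces $x_4=x_2^2$, which is (ii).
\item $x_2x_3<x_2x_4<x_4x_2$ leads to contradictions such as $g^3=e$.
\item $x_2x_4<x_2x_3$ pins $S^2$ down as the chain $e<x_2<x_3<x_4<x_2x_4<x_2x_3<x_3^2<x_3x_2<x_4x_2$. This forces $x_4x_3=x_3x_2$, i.e.\ $x_4=x_3g=xg^2$ with $x_4^2=x_3^2$, which is (iii).
\end{itemize}
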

\begin{proof}
If $\langle S \rangle$ is abelian, then we are done. Let $\langle S \rangle$ be a nonabelian subgroup. Let $S=\{x_{1},x_{2},x_{3},x_{4}\}$ and $A=\{x_{1},x_{2},x_{3}\}$ with $e=x_{1}<x_{2}<x_{3}<x_{4}$. By Proposition \ref{Sec-5-Lemma-1}, we have $|A^{2}|\leq 6$.  If $\langle A\rangle$ is abelian, then by  Proposition \ref{prop-2}, we obtain that  $S=\{e,g,g^{2}, x_{4}\}$,  with $[g^{i},x_{4}]=g^{2-2i}$ for $i\in  [\![ 0,2 ]\!]$. But for $i = 0$, we get that $[e,x_{4}]=g^{2}$, and this implies $g^2=e$, which is not possible. Therefore, $\langle A\rangle$ is nonabelian. So
\begin{equation}\label{Lemma-5.5-Eq-1}
    x_{2}x_{3} \neq x_{3}x_{2}
    \end{equation} Using Theorem \ref{NM-k=3}, we obtain that $|A^{2}|\geq 6$. Thus $|A^{2}|=6$. Applying  Lemma \ref{lemma 1-theorem-2.13}, we get $A=\{e,x,xg\}$ with $e<x<xg$, $xg\neq gx$ and $(xg)^{2}=x^{2}$. Therefore, $x_{2}=x$ and $x_{3}=xg$. This gives $x_{3}^{2}=x_{2}^{2}$ and $g=x_{2}^{-1}x_{3}=x_{2}x_{3}^{-1}<e$. By right ordering, we have 
    $$e<x_{2}<x_{3}<x_{2}x_{3}<x_{3}^{2}=x_{2}^{2}<x_{3}x_{2},$$
 $$A^{2}=\{e,x_{2}, x_{3}, x_{2}x_{3}, x_{2}^{2}, x_{3}x_{2} \}.$$ 
 If $x_{4}x_{2}< x_{2}x_{4}$, then  $\max(A^{2})= x_{3}x_{2}<x_{4}x_{2}<x_{2}x_{4}<x_{3}x_{4}<x_{4}^{2}$. This gives  
 $$|S^{2}|\geq A^{2} + |\{x_{4}x_{2}, x_{2}x_{4}, x_{3}x_{4}, x_{4}^{2}\}|=10,$$ which is a contradiction, thus we have $x_{2}x_{4}\leq x_{4}x_{2}$.\par 
 \textbf{Case 1} $(x_{4}x_{2}= x_{2}x_{4})$. Since  $\max(A^{2})= x_{3}x_{2}<x_{4}x_{2}= x_{2}x_{4}<x_{3}x_{4}<x_{4}^{2}$, we have  $$S^{2}= A^{2}\cup \{x_{2}x_{4}, x_{3}x_{4}, x_{4}^{2}\}.$$  Further, $x_{4}\in S^{2}$ implies that $x_{4}\in \{x_{2}x_{3}, x_{2}^{2}, x_{3}x_{2}\}$.   If $x_{4}=x_{2}x_{3}$, then $x_{2}x_{3}x_{2}=x_{4}x_{2}=x_{2}x_{4}$, implies $x_{3}x_{2}=x_{4}$, which contradicts \eqref{Lemma-5.5-Eq-1}. Thus, $x_{4}\neq x_{2}x_{3}$. A similar argument also implies that $x_{4} \neq x_{3}x_{2}$. Therefore $x_{4}=x_{2}^{2}$, and hence  (ii) holds. \par 
\textbf{Case 2} $(x_{2}x_{4}<x_{4}x_{2} \ \text{and} \ x_{2}x_{3}<x_{2}x_{4})$. Clearly, $x_{4}x_{2} \notin A^{2}$. Further, if $x_{2}x_{4}\in A^{2}$, then $x_{2}x_{4}=x_{3}x_{2}=x_{2}gx_{2}$. This implies $x_{4}=gx_{2}<x_{2}$, which is a contradiction. Therefore, $x_{2}x_{4}\notin A^{2}$. Let $X=A^{2}\cup \{x_{2}x_{4}, x_{4}x_{2}\}$. Now, we show that $x_{3}x_{4}\notin X$.

Suppose $x_{3}x_{4}\in X$. Then $x_{3}x_{4}=x_{4}x_{2}$. This implies $\max(A^{2})<x_{4}x_{2}<x_{4}^{2}$. Thus $S^{2}=X\cup \{x_{4}^{2}\}$. Since $x_{4}\in S^{2}$, we have  $x_{4}\in \{x_{2}x_{3}, x_{3}^{2}, x_{3}x_{2}\}$.
\begin{enumerate}
    \item If $x_{4}=x_{2}x_{3}$, then $$x_{2}x_{2}gx_{2}=x_{2}x_{3}x_{2}=x_{4}x_{2}=x_{3}x_{4}=x_{2}gx_{4}=x_{2}x_{2}x_{3}^{-1}x_{4}.$$ This implies $$gx_{2}=x_{3}^{-1}x_{4}=x_{3}^{-1}x_{2}x_{3}=x_{3}^{-1}x_{2}x_{2}g.=x_{3}^{-1}x_{3}^{2}g=x_{3}g.$$ Further, we have $$x_{2}= x_{2}x_{3}^{-1}x_{3}=gx_{3}=gx_{2}g=x_{3}g^{2}=x_{2}g^{3}.$$ This gives $g^{3}=e$, which is not possible.
    \item If $x_{4}=x_{3}^{2}$, then using $x_{4}x_{2}=x_{3}x_{4}$, we obtain $x_{4}=x_{3}x_{2}$, which is not possible.
    
    \item    If $x_{4}=x_{3}x_{2}$, then using $x_{4}x_{2}=x_{3}x_{4}$, we deduce that $x_{4}=x_{3}^{2}$, which is again not possible. \end{enumerate} 
   Therefore $x_{3}x_{4}\notin X$. Thus $$S^{2}=X\cup \{x_{3}x_{4}\}.$$ Since $x_{4}\in S^{2}$, we have $x_{4}\in\{x_{2}x_{3}, x_{3}x_{2}\}$. If $x_{4}=x_{3}x_{2}=\max(A^{2})$, then $x_{4}^{2}\notin A^{2}\cup \{x_{3}x_{4},x_{2}x_{4}, x_{4}x_{2}\}$. This give $|S^{2}|\geq 10$, which is a contradiction. Therefore $x_{4}=x_{2}x_{3}$. This implies that  $x_{4}^{2}\in \{x_{3}^{2}, x_{3}x_{2}\}$. 
        \begin{enumerate}
            \item If  $x_{4}^{2}=x_{3}^{2}$, then $x_{2}x_{3}x_{4}=x_{4}^{2}=x_{3}^{2}=x_{2}^{2}$. This implies  $x_{2}=x_{3}x_{4}>x_{4}$, which is not possible.
            \item If  $x_{4}^{2}=x_{3}x_{2}$ then $$x_{2}x_{4}=x_{2}^{2}x_{3}=x_{3}^{3}=x_{3}x_{2}^{2}=x_{4}^{2}x_{2}>x_{4}x_{2}>x_{2}x_{4},$$ which again not possible. 
        \end{enumerate} Hence we again obtain $|S^{2}|\geq 10$, so this case cannot occur.\par
  \textbf{Case 3} $(x_{2}x_{4}<x_{4}x_{2} \ \text{and} \ x_{2}x_{3}>x_{2}x_{4})$.  In this case, we have $$ e<x_{2}<x_{3}<x_{4}<x_{2}x_{4}<x_{2}x_{3}<x_{3}^{2}=x_{2}^{2}<x_{3}x_{2}<x_{4}x_{2}.$$ Thus $$S^{2}=\{e, x_{2},x_{3}, x_{4}, x_{2}x_{4}, x_{2}x_{3}, x_{3}^{2}, x_{3}x_{2}, x_{4}x_{2}\}.$$ Since $x_{4}x_{3}\in S^{2}$, we have  $x_{4}x_{3}=x_{3}x_{2}$. This implies $x_{4}=x_{3}x_{2}x_{3}^{-1}=x_{3}g=x_{2}g^{2}$ and $x_{4}^{2}=x_{3}x_{2}x_{3}^{-1}x_{3}x_{2}x_{3}^{-1}=x_{3}x_{2}^{2}x_{3}^{-1}=x_{3}^{2}$.  
  This gives $S=\{e,x,xg,xg^{2}\}$ with $x=x_{2}$ and $g=x_{2}x_{3}^{-1}$. Therefore, (iii) holds. 
\end{proof}
\begin{prop}\label{S2-geq-13}
    Let $q<-1$ be an integer.
     Let $S=\{x_{1},x_{2},x_{3},x_{4},x_{5}\}$ be a nonempty finite subset of $\mathrm{BS}(1,q)$ with $e=x_{1}<x_{2}<x_{3}<x_{4}<x_{5}$ and  $x_{3}x_{4}=x_{4}x_{3}$. If $A=\{x_{1},x_{2},x_{3},x_{4}\}$ is nonabelian  and  $|A^{2}|=9$, then $|S^{2}|\ge 13$.
\end{prop}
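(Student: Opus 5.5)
Since $A$ is nonabelian with $|A|=4$, $\min(A)=e$ and $|A^{2}|=9$, Proposition \ref{lemma 2-theorem-2.13} puts $A$ in one of two forms. Either $A=\{e,x,xg,(xg)^{2}\}$ with $x^{2}=(xg)^{2}$, or $A=\{e,x,xg,xg^{2}\}$ with $(xg^{2})^{2}=(xg)^{2}=x^{2}$, where $e<x<xg$ and $xg\neq gx$. In the first form $x_{4}=x_{3}^{2}$, so $x_{3}x_{4}=x_{4}x_{3}$ holds automatically. In the second form the hypothesis $x_{3}x_{4}=x_{4}x_{3}$ is a genuine constraint. I would first check whether it contradicts $x_{3}^{2}=x_{4}^{2}$ in a right-ordered group. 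If two commuting elements have equal squares, then $(x_{3}x_{4}^{-1})^{2}=e$, and torsion-freeness gives $x_{3}=x_{4}$, which is impossible. So only the first form survives, and we have $x_{2}=x$, $x_{3}=xg$, $x_{4}=x_{3}^{2}=x_{2}^{2}$.

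Next I would write $A^{2}$ explicitly, ordered as in the proof of Proposition \ref{lemma 2-theorem-2.13}:
$$A^{2}=\{e,x_{2},x_{3},x_{2}x_{3},x_{4},x_{3}x_{2},x_{2}x_{4},x_{3}x_{4},x_{4}x_{2}\}.$$
Here $x_{4}$ coincides with $x_{2}^{2}=x_{3}^{2}$. I would also record $x_{4}^{2}=x_{3}^{4}$, and note that $x_{4}$ is central in $\langle x_{2},x_{3}\rangle$ up to the relations. The goal is to exhibit at least four elements of $S^{2}\setminus A^{2}$, namely products involving $x_{5}$. Since $x_{5}>\max(A)$, right-invariance gives the chain
$$\max(A^{2})=\max(Ax_{4})<x_{2}x_{5}<x_{3}x_{5}<x_{4}x_{5}<x_{5}^{2},$$
provided $\max(A^{2})$ has the form $x_{i}x_{4}$. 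If instead $\max(A^{2})=x_{4}x_{2}$, I would still use $x_{4}x_{2}<x_{4}x_{5}<x_{5}^{2}$ together with $x_{5}x_{i}$ for suitable $i$. So the first step is to determine $\max(A^{2})$ among $x_{3}x_{4}$, $x_{4}x_{2}$ and $x_{4}x_{3}$, using the ordering relations already derived.

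Then I would do a case split on the position of $x_{5}x_{4}$ and $x_{5}x_{2}$ relative to $x_{4}x_{5}$, $x_{3}x_{5}$, $x_{2}x_{5}$, following the pattern of Proposition \ref{lemma 2-theorem-2.13}. If $x_{4}x_{5}\neq x_{5}x_{4}$, the five elements $x_{2}x_{5},x_{3}x_{5},x_{4}x_{5},x_{5}^{2},x_{5}x_{4}$ lie outside $A^{2}$ and are mostly distinct, which gives at least $13$. If $x_{5}$ commutes with $x_{4}=x_{2}^{2}$, then I would exploit the structure of $\mathrm{BS}(1,q)\cong\mathbb{Z}[1/q]\rtimes\mathbb{Z}$: the centralizer of a non-identity element is abelian (it is cyclic-like in either the translation or the $\mathbb{Z}$-part). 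It would then contain $x_{2}$ and $x_{3}$, since both commute with $x_{4}=x_{2}^{2}=x_{3}^{2}$. But $x_{2}$ and $x_{3}$ do not commute, so this is a contradiction. In fact the same centralizer fact already forces the contradiction, because $x_{2}$ and $x_{3}$ both lie in $C(x_{2}^{2})$ and must commute.

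I expect the main obstacle to be exactly this point. For $q<-1$, $\mathrm{BS}(1,q)$ contains elements $(r,n)$ with $n$ odd whose squares are translations, so $C(x_{2}^{2})$ need not be abelian. This is presumably how nonabelian examples with $x^{2}=(xg)^{2}$ arise. The honest route is therefore the counting case analysis: place $x_{5}x_{2}$, $x_{5}x_{3}$ and $x_{5}x_{4}$ relative to the chain above and rule out every coincidence. Each coincidence would be ruled out by an identity such as $x_{5}x_{2}=x_{3}x_{5}$, combined with $x_{2}^{2}=x_{3}^{2}$, which should force either $x_{5}\le x_{4}$ or the commutation $x_{2}x_{3}=x_{3}x_{2}$, in the same style as Cases 1–3 of Proposition \ref{lemma 2-theorem-2.13}.
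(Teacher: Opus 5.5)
Your opening step is correct, and it supplies a point the paper only asserts. In form (iii) of Proposition \ref{lemma 2-theorem-2.13}, the relations $x_3x_4=x_4x_3$ and $x_3^2=x_4^2$ give $(x_3x_4^{-1})^2=e$, so torsion-freeness excludes it and leaves $x_4=x_2^2=x_3^2$.

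\textbf{The counting route has a genuine gap.} The argument you commit to afterwards is neither set up correctly nor carried out.
\begin{enumerate}
\item Since $x_4=x_2^2$, you have $x_2x_4=x_4x_2=x_2^3$. So your list of $A^2$ counts one element twice and omits $x_4^2$, which is $\max(A^2)$. The correct chain is $e<x_2<x_3<x_2x_3<x_4<x_3x_2<x_4x_2<x_3x_4<x_4^2$.
\item The step $\max(A^2)<x_2x_5$ would need $x_ix_4<x_ix_5$, i.e.\ left-invariance. Right-invariance only gives $x_4^2<x_5x_4$ and $x_2x_5<x_3x_5<x_4x_5<x_5^2$. So the only product automatically outside $A^2$ is $x_5x_4$. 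The paper places $x_5^2$ above $x_4^2$ only after proving $x_4x_5=x_5x_4$.
\item Deciding which of $x_5,x_2x_5,x_3x_5,x_5x_2,x_5x_3,x_5^2$ fall into $A^2$ is the entire content of the paper's proof. It splits on whether $x_5x_3\in A^2$ and whether $x_5x_2\in A^2$. In the first two cases it identifies $x_5\in\{x_3^3,x_4x_2,x_3x_2\}$, shows $x_4x_5=x_5x_4$ where needed, and exhibits four explicit new products.
\item Your ``mostly distinct'' count for $x_4x_5\neq x_5x_4$, and your ``each coincidence would be ruled out'' for the commuting case, are placeholders rather than arguments.
\end{enumerate}

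\textbf{The argument you discarded is correct.} Your reason for discarding it is wrong. In $\mathbb{Z}[1/q]\rtimes\mathbb{Z}$ one has $(r,n)^2=((1+q^n)r,2n)$, which is a translation only when $n=0$. You are thinking of the excluded case $q=-1$, where $(r,1)^2=(0,2)$ for every $r$.

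For $|q|\ge 2$, an element $(s,m)$ commutes with $(r,n)$ if and only if $r(1-q^m)=s(1-q^n)$.
\begin{itemize}
\item If $n=0$ and $r\neq 0$, this forces $m=0$, so the centralizer is $\mathbb{Z}[1/q]\times\{0\}$.
\item If $n\neq 0$, it determines $s$ from $m$, so the centralizer embeds in $\mathbb{Z}$.
\end{itemize}
Hence every centralizer of a nontrivial element is abelian. Since $x_2,x_3\in C_G(x_4)$ with $x_4=x_2^2=x_3^2\neq e$, this gives $x_2x_3=x_3x_2$, contradicting that $A$ is nonabelian.

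More directly, $1+q^n\neq 0$ for $q\neq -1$, so squaring is injective on $\mathrm{BS}(1,q)$, and $x_2^2=x_3^2$ already forces $x_2=x_3$. So once Proposition \ref{lemma 2-theorem-2.13} is applied, the hypotheses cannot be met for $q<-1$, and the statement holds vacuously.

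Had you kept this line, you would have a complete proof, far shorter than the paper's case analysis. As written, the proposal abandons the argument that works and leaves the one it adopts unfinished.
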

\begin{proof}
 Since $|A^{2}|=9$ and $x_{3}x_{4}=x_{4}x_{3}$, by Proposition \ref{lemma 2-theorem-2.13},  $A$ is of the form  $\{e,x_{2},x_{3},x_{2}^{2}\}$, with $e<x_{2}<x_{3}<x_{2}^{2}$ and   $x_{4}=x_{2}^2=x_{3}^2$. By right ordering in $G$, we have \begin{equation}\label{eq-t1}
         e<x_{2}<x_{3}<x_{2}x_{3}<x_{3}^{2}=x_{2}^{2}<x_{3}x_{2}<x_{4}x_{2}=x_{2}x_{4}<x_{3}x_{4}<x_{4}^{2}<x_{5}x_{4}.
     \end{equation} Therefore, $$A^{2}=\{e, x_{2}, x_{3}, x_{2}x_{3}, x_{3}^{2}, x_{3}x_{2}, x_{4}x_{2}, x_{3}x_{4}, x_{4}^{2}\}.$$ Also,
       $$x_{5}x_{4}\notin A^{2}.$$
   
\textbf{Case 1} $(x_{5}x_{3}\in A^{2}).$ Since $x_{3}x_{4}=x_{4}x_{3}<x_{5}x_{3}$, it follows that  $x_{5}x_{3}=x_{4}^{2}$. Using $x_{4}=x_{2}^{2}=x_{3}^{2}$, we obtain  \begin{equation}\label{Lemma-5.6-Eq-1}
x_{5}=x_{3}^3=x_{4}x_{3}=x_{3}x_{4}, x_{3}x_{5} = x_{5}x_{3}, x_{4}x_{5}=x_{5}x_{4},\ \text{and} \ x_{2}x_{5}\neq x_{5}x_{4}.
\end{equation}  Also, $x_{4}^{2}<x_{5}x_{4}<x_{5}^{2}$, which implies $x_{5}^{2}\notin A^{2}$. Moreover, if $x_{2}x_{5}= x_{5}x_{2}$, then \eqref{Lemma-5.6-Eq-1} implies that $x_{2}x_{3}= x_{3}x_{2}$, which contradicts \eqref{eq-t1}. Therefore, $x_{2}x_{5}\neq x_{5}x_{2}$. A similar argument also implies that $x_{5}x_{2} \neq x_{4}^{2}$, $x_{2}x_{5} \neq x_{4}^{2}$ and $x_{5}x_{2} \neq  x_{3}x_{4}$. Further, $x_{4}x_{2}<x_{5}x_{2}$ and $x_{5}=x_{3}x_{4}<x_{2}x_{5}$ imply that  $\{x_{2}x_{5},x_{5}x_{2}\}\cap  A^{2}=\emptyset$. Thus,   $$|S^{2}|\geq |A^{2}|+|\{x_{2}x_{5},x_{5}x_{2},x_{5}x_{4}, x_{5}^{2}\}|=13.$$\par
\textbf{Case 2} ($ x_{5}x_{3}\notin A^{2} \text{ and }  x_{5}x_{2}\in  A^{2}$). Since $x_{4}x_{2}<x_{5}x_{2}$ and $x_{5}x_{2}\in  A^{2}$, we have $x_{5}x_{2}\in \{x_{3}x_{4}, x_{4}^{2}\}$. Consider the following cases. \begin{enumerate}
    \item  If $x_{5}x_{2}=x_{4}^{2}$, then $x_{5} \in \langle x_{2} , x_{4} \rangle$ and $x_{5} = x_{4}x_{2}$. 
    Therefore, $x_{4}x_{5}=x_{5}x_{4}$ and $x_{2}x_{5}=x_{5}x_{2}$. Further, if  $x_{3}x_{5}= x_{5}x_{3}$, then $$x_{4}x_{2}x_{3} = x_{5}x_{3} = x_{3}x_{5} = x_{3}x_{4}x_{2} = x_{4}x_{3}x_{2},$$ this gives $x_{3}x_{2} = x_{2}x_{3}$, which contradicts \eqref{eq-t1}. Therefore,  $x_{3}x_{5}\neq x_{5}x_{3}$. Since $x_{4}^{2}<x_{4}x_{5}<x_{5}^{2}$, we have $x_{5}^{2}\notin A^{2}$. Also,  $x_{3}x_{2}<x_{4}x_{2}<x_{3}x_{4}x_{2}=x_{3}x_{5}$. This implies that if $x_{3}x_{5}\in A^{2}$, then $x_{3}x_{5}=x_{4}^{2}$. This gives $x_{3}x_{2}=x_{4}$ which is not possible because $x_{4}=x_{2}^{2}<x_{3}x_{2}$. Thus $x_{3}x_{5}\notin A^{2}$. Therefore, we have $$|S^{2}|\geq |A^{2}|+|\{x_{3}x_{5},x_{5}x_{3},x_{5}x_{4},x_{5}^{2}\}|=13.$$
\item If $x_{5}x_{2}=x_{3}x_{4}$, then $x_{5}x_{2}=x_{3}x_{2}^{2}$ and this implies  $x_{5}=x_{3}x_{2}$. It follows that $$x_{4}x_{5}=x_{4}x_{3}x_{2}=x_{3}^{2}x_{3}x_{2}=x_{3}x_{2}^{2}x_{2}=x_{5}x_{4}.$$ Now, if $x_{2}x_{5} = x_{5}x_{2}$, then $$x_{3}x_{2}^{2} = x_{3}x_{4} = x_{5}x_{2} = x_{2}x_{5} =  x_{2}x_{3}x_{2},$$
this gives $x_{3}x_{2}=x_{2}x_{3}$, which contradicts \eqref{eq-t1}. Therefore, $x_{2}x_{5}\neq x_{5}x_{2}$. Since $x_{4}^{2}<x_{5}x_{4}=x_{4}x_{5}<x_{5}^{2}$, we have   $x_{5}^{2}\notin A^{2}$. Also, note that $x_{3}x_{2}<x_{2}x_{3}x_{2} = x_{2}x_{5}$ and $x_{3}x_{4} \neq x_{2}x_{5}$. Therefore, if  $x_{2}x_{5}\in A^{2}$, then   $x_{2}x_{5}=x_{4}^{2}$. This implies $x_{2}x_{3}x_{2}=x_{4}x_{2}^{2}$, which gives $x_{3}=x_{4}$, which is not possible. Thus, $x_{2}x_{5}\notin A^{2}$. Now, if $x_{2}x_{5}\neq x_{5}x_{3}$, then $$|S^{2}|\geq  |A^{2}|+|\{x_{2}x_{5},x_{5}x_{3},x_{4}x_{5},x_{5}^{2}\}| =13.$$ 
 Assume that $x_{2}x_{5}=x_{5}x_{3}$. Then $x_{3}x_{5} \neq x_{5}x_{3}$ and $x_{4}x_{3}<x_{5}x_{3}=x_{2}x_{5}<x_{3}x_{5}$. Therefore, if $x_{3}x_{5}\in A^{2}$, then $x_{3}x_{5}=x_{4}^{2}$, which gives $x_{5}=x_{3}^3$, which contradicts $x_{3}x_{5} \neq x_{5}x_{3}$. Thus, $x_{3}x_{5}\notin A^{2}$.  Hence,
 $$|S^{2}|\geq |A^{2}|+|\{x_{3}x_{5},x_{5}x_{3},x_{4}x_{5},x_{5}^{2}\}|=13.$$\end{enumerate}
 
  \textbf{Case 3} ($x_{5}x_{3}\notin A^{2} \text{ and } x_{5}x_{2} \notin  A^{2}$). If $x_{5}\notin A^{2}$ , then $$|S^{2}|\geq |A^{2}|+|\{x_{5}, x_{5}x_{2},x_{5}x_{3},x_{5}x_{4}\}|=13.$$ If $x_{5}\in A^{2}$, then  $x_{5}\in\{x_{3}x_{2},x_{4}x_{2},x_{3}x_{4},x_{4}^{2}\}$. By a similar argument as given in previous cases, we obtain that $x_{4}x_{5}=x_{5}x_{4}$. This gives $x_{5}^{2}\notin A^{2}$. Hence, $$|S^{2}|\geq |A^{2}|+|\{x_{5}^{2},x_{5}x_{2},x_{5}x_{3},x_{5}x_{4}\}|=13.$$ 
\end{proof}

\begin{prop}\label{main lemma} Let $k\geq 5$ be an integer. 
Let $A=\{e,x,xg\ldots,xg^{k-3}\}$ be a subset of a right-ordered group $G$, where $e<x<xg<\cdots<xg^{k-3}$, $x^{2}=(xg^{i})^{2}$ for all $i\in[\![0,k-3]\!]$, and $|A^2|=3|A|-3$.  Let $x_{k}\in G$ such that $x_{k}>xg^{k-3}$ and $|(A\cup\{x_{k}\})^{2}|=3|A|$. Then $x_{k}=xg^{k-2}$ and $x_{k}^{2}=x^{2}$.
\end{prop}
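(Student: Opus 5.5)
Throughout, write $y=x_k$ and $S=A\cup\{y\}$. The plan is to count exactly which products involving $y$ can be new, and then use the relations this forces.

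\textbf{Structure of $A$ and $A^2$.} From $(xg)^2=x^2$ we get $gxg=x$, that is, $xg=g^{-1}x$. Hence $xg^{i}\,xg^{j}=x^{2}g^{j-i}$, and the hypothesis $(xg^i)^2=x^2$ is consistent with this. Since $x<xg$ we have $g>e$, and
$$A^{2}=\{e\}\cup (A\setminus\{e\})\cup\{x^{2}g^{d}: |d|\le k-3\}.$$
This set has $1+(k-2)+(2k-5)=3k-6=3|A|-3$ elements, matching the hypothesis. Since $|S^2|=3|A|$, exactly three elements of
$$S^{2}\setminus A^{2}\subseteq \{y,\;y^{2}\}\cup yA\cup Ay$$
are new. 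I would first record the order relations that right-invariance gives. From $y>xg^{k-3}$, right multiplication gives $ya>xg^{k-3}a$ for every $a\in A$. Also $y\cdot xg^{i}<y\cdot xg^{i+1}$ holds in either direction of the chain $yA$, once we decide whether $yx<yxg$ or $yx>yxg$; this is the same dichotomy used in the proofs of Proposition~\ref{prop-2} and Proposition~\ref{prop-3}.

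\textbf{Step 1: $y^2\in A^2$.} Suppose $y^2\notin A^2$. The chain $xy<xgy<\dots<xg^{k-3}y<y^2$, together with $y\cdot xg^{k-3}>(xg^{k-3})^2=x^2$ and the elements $y$ and $y\cdot e=y$, should produce at least four elements outside $A^2$. To see this, I would compare with $\max(A^2)=x^2g^{k-3}$ and argue as in Case~1 of the Claim in Proposition~\ref{prop-2}. Alternatively, if $y^2\le\max(A^2)$, Proposition~\ref{NMS-Pro-1} applied to $A\setminus\{e\}$ gives $|yA\cup Ay|$ large, and the same count contradicts $|S^2|=3|A|$. So $y^2\in A^2$, and the three new elements all lie in $\{y\}\cup yA\cup Ay$.

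\textbf{Step 2: pin down $y$ and $y^2$.} Since $y^2\in A^2$ and $y^2>xg^{k-3}y$, the value $y^2$ is one of the top elements $x^2g^{d}$. I would then split into cases according to the sign pattern of $yA$ (increasing or decreasing), exactly as in Proposition~\ref{prop-2}. In each case, exactly three elements of $yA\cup Ay\cup\{y\}$ lie outside $A^2$, so all the remaining ones must coincide with elements of $A^2$. The key resulting equalities should be of the form
$$y\cdot xg^{i}=x^{2}g^{d_i}\quad\text{or}\quad xg^{i}\,y=x^{2}g^{d'_i}.$$
Combining two consecutive such equalities, in the manner of Lemma~\ref{Lemma 4.1}, should force $gyg=y$, i.e.\ $yg=g^{-1}y$. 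With this relation $y\,x^{-1}$ commutes appropriately, and then $y^2=x^2g^{d}$ combined with the ordering should give $y=xg^{k-2}$. Finally $(xg^{k-2})^2=x^2$ follows from $gxg=x$.

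\textbf{Main obstacle.} The hardest step is Step~2. One must show that the only arrangement of $yA\cup Ay$ consistent with exactly three new elements is the one where $y$ continues the progression. Ruling out the other arrangements (for instance $y\in A^2$ with $y=x^2g^{d}$ for some $d$) requires a careful case analysis, in the spirit of Propositions~\ref{prop-3} and~\ref{S2-geq-13}. There I would first try to exclude each alternative by deriving $g^{m}=e$ for some $m\neq0$, which is impossible in a right-ordered group.
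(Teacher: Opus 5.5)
There is a genuine gap. The step that carries the whole proof is deciding which three products involving $y$ are new, and you only announce it (``should force'', ``should give''). The ordering facts you would base it on are also incorrect.

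\textbf{The sign of $g$ and $\max(A^2)$.} In a right-ordered group, $x<xg$ does not give $g>e$. From $(xg)^2=x^2$ you get $gxg=x$, so $g=x(xg)^{-1}$. Right-multiplying $x<xg$ by $(xg)^{-1}$ then gives $g<e$. Right-invariance ($xg^i\cdot xg^j<xg^{i'}\cdot xg^j$ for $i<i'$) orders the nontrivial part of $A^2$ as
$$x\cdot xg^{k-3}<\cdots<x^2<\cdots<xg^{k-3}\cdot x.$$
So $\max(A^2)=x^2g^{3-k}$, while your $x^2g^{k-3}$ is the \emph{smallest} element of $A^2$ above $xg^{k-3}$.

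\textbf{Step 1 is not established.}
\begin{itemize}
\item The elements $xg^iy$ need not be new: for the extremal $y=xg^{k-2}$, every $xg^iy$ with $i\ge1$ lies in $A^2$.
\item The inequality $y\,xg^{k-3}>x^2$ says nothing, because $x^2$ sits in the middle of $A^2$.
\item Proposition \ref{NMS-Pro-1} only bounds $|yA'\cup A'y|$. In Proposition \ref{prop-2} this became a count of new elements only because $A$ was abelian, so $y\notin C_G(A)$ forced $(yA\cup Ay)\cap A^2=\emptyset$. Here $A$ is nonabelian and that disjointness fails.
\end{itemize}

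\textbf{Step 2 is not established.} Even granting $gyg=y$, together with $gxg=x$ this only says that $x^{-1}y$ commutes with $g$. You give no argument that this, plus $y^2=x^2g^d$ and the ordering, determines $y$.

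\textbf{The missing idea: locate $xy$ relative to $x\cdot xg^{k-3}$.} Write $x_2=x$, $x_{i+2}=xg^i$, $x_k=y$. The paper proceeds as follows.
\begin{itemize}
\item $x_2x_k<x_kx_2$. Otherwise $x_kx_2\le x_2x_k<x_3x_k<\cdots<x_k^2$ are $k-1\ge4$ elements above $\max(A^2)=x_{k-1}x_2$.
\item $x_2x_k\notin A^2$, and $x_3x_k\neq x_ix_j$ for $3\le i\le k-1$. Both use $g<e$: for instance $x_2x_k=x_ix_j$ would give $x_k=g^{i-2}x_j<x_j$.
\item $x_3x_k\neq x_kx_2$.
\item The key claim is $x_2x_k<x_2x_{k-1}$. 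If it fails, the three new elements are exactly $x_2x_k$, $x_kx_2$, $x_3x_k$. This forces $x_k$ and $x_4x_k$ into $A^2$, which leads to a contradiction.
\item Once $x_2x_k<x_2x_{k-1}$, the new elements are $x_k$, $x_2x_k$, $x_kx_2$, and $S^2$ is listed completely. Then $x_3x_k$ can only equal $x_2x_{k-1}$, i.e.\ $g\,y=xg^{k-3}$. Hence $y=g^{-1}xg^{k-3}=xg^{k-2}$, and $y^2=x^2$ follows from $gxg=x$.
\end{itemize}
In particular, $y^2\in A^2$ is an output of the argument, not a usable first step.
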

\begin{proof} Let $x_{1}=e, x_{i+2} = xg^{i}$ for $i\in[\![0,k-3]\!]$.  Let $S=A\cup\{x_{k}\}$ with $|S|=k$. Since $x_{3}^{2} = x_{2}^{2}$,  we have  $g=x_{2}^{-1}x_{3}=x_{2}x_{3}^{-1}<e$. Note that, by right-ordering in $G$, we have $$e<x_{2}<\cdots<x_{k-1}<x_{2}x_{k-1}<\cdots<x_{k-1}^{2}=x_{2}^{2}<x_{3}x_{2}<\cdots<x_{k-1}x_{2}.$$ Therefore, $$A^{2}=\{e, x_{2}, \ldots, x_{k-1}, x_{2}x_{k-1}, \ldots, x_{k-1}^{2}, x_{3}x_{2}, \ldots, x_{k-1}x_{2}\}.$$
     Since $x_{k-1}x_{2}<x_{k}x_{2}$, we have  $x_{k}x_{2}\notin A^{2}$. Further, if $x_{2}x_{k}\geq x_{k}x_{2}$, then we have $$\max(A^{2})=x_{k-1}x_{2} < x_{k}x_{2} \leq x_{2}x_{k} < x_{3}x_{k}<x_{4}x_{k}\cdots < x_{k-1}x_{k}<x_{k}^{2},$$ which implies that $|S^{2}|\geq 3k-2$. Therefore, 
     \begin{equation}\label{Theorem-2.11-Eq-1}
         x_{2}x_{k} <x_{k}x_{2}.
     \end{equation}
Now, we show that $x_{2}x_{k}\notin A^{2}$. Suppose that $x_{2}x_{k}\in A^{2}$. Since $x_{k}<x_{2}x_{k}$, there exist $i\in[\![3,k-1]\!]$ and  $j\in[\![2,k-1]\!]$ such that $x_{2}x_{k}=x_{i}x_{j}$.  This gives $x_{2}x_{k}=x_{2}g^{i-2}x_{j}$. Since $g<e$, we have  $x_{k}=g^{i-2}x_{j}<x_{j} \leq x_{k-1}$, which is a contradiction. Therefore,
\begin{equation}\label{Theorem-2.11-Eq-2}
    x_{2}x_{k}\notin A^{2}.
\end{equation}
With a similar argument, we have  \begin{equation}\label{Theorem-2.11-Eq-4}
    x_{3}x_{k}\neq x_{i}x_{j} \ \text{ for} \  i\in [\![3,k-1]\!] \ \text{and} \  j\in [\![2,k-1]\!],
\end{equation}
and
\begin{equation*}\label{Theorem-2.11-Eq-5}
    x_{4}x_{k}\neq x_{i}x_{j} \ \text{for} \  i\in [\![4,k-1]\!] \ \text{and} \  j\in [\![2,k-1]\!].
\end{equation*}
Next, we show that $x_{3}x_{k}\neq x_{k}x_{2}$. Suppose that $x_{k}x_{2}=x_{3}x_{k}$. Then $$\max(A^{2})=x_{k-1}x_{2} <x_{k}x_{2}=x_{3}x_{k}<x_{4}x_{k}<\cdots<x_{k}^{2},$$
     which implies that $|S^{2}|\geq  |A^{2}| + |\{x_{2}x_{k}, x_{k}x_{2},x_{4}x_{k},\ldots, x_{k}^{2}\}|\geq 3k-6+4 =3k-2$. Therefore, 
     \begin{equation}\label{Theorem-2.11-Eq-6}
         x_{3}x_{k}\neq x_{k}x_{2}.
     \end{equation}

   \textbf{Claim} $x_{2}x_{k}<x_{2}x_{k-1}$. 
    
Suppose $x_{2}x_{k-1}<x_{2}x_{k}$. Since $x_{2}x_{k}<x_{3}x_{k}$, it follows from \eqref{Theorem-2.11-Eq-1}, \eqref{Theorem-2.11-Eq-2},  \eqref{Theorem-2.11-Eq-4}, and \eqref{Theorem-2.11-Eq-6}, we have  $$S^2 = A^2 \cup \{x_{3}x_{k},x_{2}x_{k}, x_{k}x_{2}\}.$$
      Note that, if $x_{4}x_{k}=x_{k}x_{2}$, then $\max(A^{2})  = x_{k-1}x_{2}<x_{k}x_{2}<x_{5}x_{k}$. This implies $$|S^{2}| \geq |A^2| \cup |\{x_{3}x_{k},x_{2}x_{k}, x_{k}x_{2},x_{5}x_{k}\}| \geq 3k-2.$$ Therefore, $x_{4}x_{k}\neq x_{k}x_{2}$.  Since $x_{4}x_{k} \in S^{2}$ and $x_{4}x_{k} \notin \{x_{3}x_{k},x_{2}x_{k}, x_{k}x_{2}\}$, we have  $x_{4}x_{k}\in A^{2}$. Similarly,  $x_{k} \in A^{2}$.  Since $x_{2}x_{k-1}<x_{2}x_{k}<x_{3}x_{k}<x_{4}x_{k}$ and $$x_{4}x_{k}\notin \{x_{4}x_{k-1},\ldots, x_{k-1}^{2}, x_{4}x_{2},\ldots, x_{k-1}x_{2}\},$$ we have $x_{4}x_{k}\in \{x_{3}x_{k-1}, x_{3}x_{2}\}$. Suppose that $x_{4}x_{k}=x_{3}x_{2}$. Then we obtain $x_{2}g^{2}x_{k}=x_{2}gx_{2}$, which leads to $x_{k}=x_{3}$ because $g=x_{2}x_{3}^{-1}$, which is a contradiction.
          Therefore, $$x_{4}x_{k}= x_{3}x_{k-1}.$$
     Moreover, $x_{k}<x_{2}x_{k}<x_{4}x_{k}$ and  $x_{k}\in A^{2}$ implies that $x_{k}=x_{2}x_{k-1}$. Thus, we obtain $x_{4}x_{2}x_{k-1}=x_{3}x_{k-1}$, so $x_{3} = x_{4}x_{2}>x_{4}$, which is again a contradiction. Therefore, $$x_{2}x_{k}<x_{2}x_{k-1}.$$ By right-ordering in $G$, we have $$x_{1}<x_{2}<\cdots<x_{k}<x_{2}x_{k}<x_{2}x_{k-1}<\cdots<x_{k-1}^{2}=x_{2}^{2}<x_{3}x_{2}<x_{4}x_{2}<\cdots< x_{k}x_{2}.$$  Therefore $$S^{2}=\{e,x_{2},\ldots, x_{k},x_{2}x_{k}, x_{2}x_{k-1},\ldots, x_{k-1}^{2},x_{3}x_{2},\ldots, x_{k}x_{2}\}.$$
Since  $x_{3}x_{k}\in S^{2}$, $x_{2}x_{k}<x_{3}x_{k}$, $x_{3}x_{k} \neq x_{i}x_{k-1}$, where $i\in [\![3,k-1]\!]$, and $x_{3}x_{k}\neq x_{j}x_{2}$, where $j\in [\![4,k]\!]$, we have $x_{3}x_{k}=x_{2}x_{k-1}$. This implies $x_{k}=x_{2}g^{k-2}$. Therefore,  $$S=\{e, x,xg,\ldots, xg^{k-2}\},$$ with $x=x_{2}$ and $g=x_{2}x_{3}^{-1}$. Note that $$x_{k}^{2} = xg^{k-2}xg^{k-2} = xgx^{-1}(xg^{k-3})^{2}g=xgx^{-1}x^{2}g=(xg)^{2}.$$ This completes the proof.
\end{proof}
\begin{prop}\label{For k=5}
Let $q<-1$ be an integer. Let $S$ be a nonempty finite  subset of a right-ordered group $\mathrm{BS}(1,q)$ with $|S|=5$, $\min(S)=e$, and  $|S^{2}|=12$. Then, either $\langle S\rangle$ is abelian, or there exist $x, g \in \mathrm{BS}(1,q)$ such that 
 $$S=\{e, x, xg, xg^{2}, xg^{3}\},$$ with $e<x<xg<xg^{2}<xg^{3}$ and  $(xg^{2})^{2}=(xg)^{2}=x^{2}$.
    \end{prop}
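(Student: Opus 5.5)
Write $S=\{x_1,x_2,x_3,x_4,x_5\}$ with $e=x_1<x_2<x_3<x_4<x_5$, set $A=S\setminus\{x_5\}$, and assume $\langle S\rangle$ is nonabelian. The plan is to reduce to the four-element classification of Proposition \ref{lemma 2-theorem-2.13} and then extend it by Proposition \ref{main lemma}.

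\textbf{Step 1: pin down $|A^2|$.} By Proposition \ref{Sec-5-Lemma-1} we have $|A^{2}|\le |S^2|-3=9$.

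\textbf{Step 2: rule out $\langle A\rangle$ abelian.} Suppose $\langle A\rangle$ is abelian. Then $S=A\cup\{x_5\}$ with $x_5>\max(A)$, so Proposition \ref{prop-2} applies. Since $\langle S\rangle$ is nonabelian, $A=\{x,xg,xg^2,xg^3\}$ and $[xg^i,x_5]=g^{3-2i}$ for all $i$. But $e\in A$ and $e=\min(A)$ force $x=e$ when $g>e$, and then $i=0$ gives $[e,x_5]=g^{3}$, so $g^3=e$. This is impossible in a right-ordered (hence torsion-free) group. The case $g<e$ puts $e=xg^3$, and the index $i=3$ gives $g^{-3}=e$, the same contradiction.

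\textbf{Step 3: fix the shape of $A$.} So $\langle A\rangle$ is nonabelian. Proposition \ref{NM-k=4} then gives $|A^2|\ge 9$, hence $|A^2|=9$. By Proposition \ref{lemma 2-theorem-2.13}, $A$ is one of two types:
\begin{enumerate}
\item[(ii)] $A=\{e,x,xg,(xg)^2\}$ with $x^2=(xg)^2$;
\item[(iii)] $A=\{e,x,xg,xg^2\}$ with $(xg^2)^2=(xg)^2=x^2$.
\end{enumerate}

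\textbf{Step 4: exclude type (ii).} In type (ii), $x_4=x_3^2$ commutes with $x_3$, so Proposition \ref{S2-geq-13} applies and gives $|S^2|\ge 13$, contradicting $|S^2|=12$.

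\textbf{Step 5: extend type (iii).} In type (iii), $A$ satisfies the hypotheses of Proposition \ref{main lemma} with $k=5$. Indeed $|A^2|=9=3|A|-3$, $x_5>xg^2$, and $|S^2|=12=3|A|$. That proposition yields $x_5=xg^3$ with $x_5^2=x^2$, which is the claimed form.

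\textbf{Main obstacle.} The delicate point is checking that Proposition \ref{S2-geq-13} really covers type (ii). Its hypothesis, $A$ nonabelian with $x_3x_4=x_4x_3$ and $|A^2|=9$, holds because $x_4=x_3^2$. A second point to confirm is that in type (iii) the ordering $e<x<xg<xg^2$ needed in Proposition \ref{main lemma} is exactly the one supplied by Proposition \ref{lemma 2-theorem-2.13}.
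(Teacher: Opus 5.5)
Your proposal is correct and follows the paper's proof essentially step for step. You rule out $\langle A\rangle$ abelian via Proposition \ref{prop-2}, force $|A^2|=9$ from Propositions \ref{Sec-5-Lemma-1} and \ref{NM-k=4}, split into the two shapes of Proposition \ref{lemma 2-theorem-2.13}, eliminate the $\{e,x,xg,(xg)^2\}$ shape with Proposition \ref{S2-geq-13}, and extend the $\{e,x,xg,xg^2\}$ shape with Proposition \ref{main lemma}; your extra treatment of the orientation $g<e$ in Step 2 (where $e=xg^3$ and one uses $i=3$) is a small point of care that the paper's version glosses over.
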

     \begin{proof}Let $S=\{x_{1},x_{2},x_{3},x_{4},x_{5}\}$ with $e=x_{1}<x_{2}<x_{3}<x_{4}$. Let $A=\{e, x_{2}, x_{3}, x_{4}\}$. If $\langle A \rangle $ is abelian, then it follows from Proposition \ref{prop-2} that either  $\langle S\rangle$ is abelian or  $S=\{e,g, g^{2}, g^{3}, x_{5}\}$,  with $[g^{i},x_{5}]=g^{3-2i}$ for $i\in  [\![ 0,3 ]\!]$. But for $i=0$, we obtain $g=e$, which is not possible. Therefore, $\langle S\rangle$ is abelian if $\langle A \rangle $ is abelian. Now, assume that  $\langle A \rangle$ is nonabelian. Then, Proposition \ref{NM-k=4} implies that $|A| \geq 9$. Further, if  $|A^{2}|\geq 10$, then by Proposition \ref{Sec-5-Lemma-1}, we have  $|S^{2}|\geq 10+3=13$.   Therefore, $|A^{2}|=9$. By Proposition \ref{lemma 2-theorem-2.13}, the set $A$  is either  of the form $\{e,x,xg,(xg)^{2}\}$ or $\{e,x,xg,xg^{2}\}$. If $A=\{e,x,xg,(xg)^{2}\}$, then Proposition \ref{S2-geq-13} implies that  $|S^{2}|\geq 13$. Therefore,   $$A=\{e,x,xg,xg^{2}\},$$ with $e<x<xg<xg^{2}$ and $(xg^{2})^{2}=(xg)^{2}=x^{2}$. Thus, $x_{2}=x$,  $x_{3} = xg$ and $x_{4} = xg^{2}$ with $g=x_{2}^{-1}x_{3}=x_{2}x_{3}^{-1}<e$. By  Proposition \ref{main lemma}, we get $x_{5}=x_{3}g^{2}=x_{2}g^{3}$. Therefore, we have  $$S=\{e,x,xg,xg^{2},xg^{3}\},$$ 
      with $e<x<xg<xg^{2}<xg^{3}$ and  $(xg^{2})^{2}=(xg)^{2}=x^{2}$.
     This completes the proof.
\end{proof}
\begin{proof}[Proof of Theorem \ref{Theorem-3}] If $q \geq 0$, then $BS(1,q)$ is an ordered group (see \cite{Neetu_Mohan_Shankar}), and thus the result follows directly from Theorem \ref{ordered 3k-3}. Now suppose $q < -1$ and let $S = \{x_{1}, x_{2}, \ldots, x_{k-1}, x_{k}\}$ with $e = x_{1} < x_{2} < \cdots < x_{k}$.  We prove by induction on $|S|=k\geq 5$. For $k = 5$, the result follows from Proposition \ref{For k=5}. Assume that the result holds for some $k-1 \geq 5$. Now, we show that the result holds for $|S| = k \geq 6$.

Let $A=\{e, x_{2}, \ldots, x_{k-1}\}.$ If $A$ is abelian, then by Proposition \ref{prop-2} either  $\langle S\rangle$ is  abelian, or  $S=\{e,g,\ldots,g^{k-2}, x_{k}\}$  with $[g^{i},x_{k}]=g^{k-2-2i}$ for $i\in  [\![ 0,k-2 ]\!]$. But for $i = 0$, we get that $[e,x_{k}]=g^{k-2}$, and this implies $g=e$, which is not possible. Therefore, $\langle S\rangle$ is abelian. Now, assume that  $A$ is nonabelian. Therefore, by Proposition \ref{Sec-5-Lemma-1}, we have $$|A^{2}|\leq 3k-3-3=3(k-1)-3.$$ Further, Theorem \ref{Theorem for BS(1,q)} implies that $|A^{2}| \geq 3(k-1)-3.$ Therefore $|A^{2}| = 3(k-1)-3.$ Thus, by induction, we have $$A=\{e,x, xg,\ldots, xg^{k-3}\},$$ with $xg\neq gx$ and    $(xg^{i})^{2}=x^{2}$ for each $i\in [\![0, k-3]\!]$. Therefore $x_{1}=e$ and  $xg^{i}=x_{i+2}$ for $i\in[\![0,k-3]\!]$. Since $x_{3}^{2} = x_{2}^{2}$,  we have  $g=x_{2}^{-1}x_{3}=x_{2}x_{3}^{-1}<e$. By Proposition \ref{main lemma}, we get  $x_{k}=x_{2}g^{k-2}$. Therefore, we have  $$S=\{e, x,xg,\ldots, xg^{k-2}\},$$ with $xg\neq gx$ and $(xg^{i})^{2}=x^{2}$ for each $0\leq i\leq k-2$. This completes the proof.

\end{proof}

\section{Future Problems}\label{section-7}
Theorem \ref{NMS-Thm-3.3} and Theorem  \ref{Theorem-2} motivate us to propose the following conjecture.
\begin{conjecture}
                Let $S$ be a nonempty finite subset of a torsion-free group $G$ such that $S = A \cup B$, where $A$ and $B$ are disjoint abelian sets. If $|S^{2}| \leq  3k-3$, $\langle S \rangle$ is abelian.
           \end{conjecture}
\noindent Whereas this may not hold for three abelian sets. Consider the following example.  

           \begin{example} 
               Let
               \begin{align*}
                   A &=\{(-1,1),(-1,3),\ldots,(-1,2k-1)\},\\
                   B&=\{(1,1),(1,3),\ldots,(1,2k-1)\},\\
                   C&=\{(-1,2),(1,2),(-1,4),(1,4),\ldots,(-1,2k),(1,2k)\}.
               \end{align*}
               be subsets of the group $\mathbb{Z}\rtimes \mathbb{Z}$. Let $S=A\cup B\cup C$. Then, we have  
               $$|S^{2}|=3|S|-3.$$ It is easy to observe that  $\langle A\rangle$, $\langle B \rangle$, and $\langle C\rangle$ are abelian, whereas $\langle S\rangle$ is nonabelian.
           \end{example}
      \noindent This motivates us to propose the following conjecture.
           \begin{conjecture}
                Let $S$ be a nonempty finite subset of a torsion-free group $G$ such that $S = A \cup B \cup C$, where $A, B, C$ are pairwise disjoint abelian sets. If $|S^{2}| \leq  3k-4$, $\langle S \rangle$ is abelian.
           \end{conjecture}  
           Also, we strongly believe that the following conjecture is true. 
           \begin{conjecture}
           Let $S$ be a nonempty finite subset of a torsion-free group $G$ with $\left|S\right| \geq 3$ and $e\in S$. If $\left|S^{2}\right| \leq 3|S|-3$, then $\langle S\rangle$ is an abelian subgroup of $G$.
                
           \end{conjecture}
           The following example shows that $e \in S$ is necessary. 
           \begin{example} Let $S=\{(1,1), (2,1),\ldots,(k,1)\}\subseteq \mathbb{Z}\rtimes \mathbb{Z}$, where $t\geq 3$. Clearly, $S$ is a nonabelian set.  For each $i,j\in [1,k]$, we have $$(i,1)^{2} = (0,2), \quad (i,1)(j,1) = (i-j,2), \quad (j,1)(i,1) = (j-i,2).$$
		Thus, $$S^{2} = \{(1-k,2), (2-k,2), \ldots,(k-2,2), (k-1,2)\} ~\text{and}~|S^{2}|=2k-1.$$
	\end{example}
	\bibliographystyle{amsplain}

\end{document}